\providecommand\@dotsep{5}
\def\listtodoname{List of Todos}
\def\listoftodos{\@starttoc{tdo}\listtodoname}
\numberwithin{equation}{section}
\newcommand{\Om} {\Omega}
\newcommand{\la} {\lambda}
\newcommand{\mb} {\mathbb}
\newcommand{\mc} {\mathcal}
\newcommand{\noi} {\noindent}
\newtheorem{theorem}{Theorem}[section]
\newtheorem{lemma}[theorem]{Lemma}
\newtheorem{remark}{Remark}
\newtheorem{definition}{Definition}[section]
\begin{document}

\title[Existence and multiplicity of solutions  ...]
{The Nehari manifold method for Fractional Kirchhoff problem involving singular and exponential nonlinearity}
\author{ Tuhina Mukherjee and Mingqi Xiang}

\address[Tuhina Mukherjee ]
{\newline\indent Department of Mathematics,
	\newline\indent
National Institute of Technology Warangal
	\newline\indent
Hanamkonda, Warangal-506004, India}

\pretolerance10000


\begin{abstract}
\noindent In this paper we establish the existence of at least two weak solutions for the following fractional Kirchhoff problem involving singular and exponential nonlinearity
\begin{equation*}
   \left\{\begin{split}
        M\left(\|u\|^{\frac{n}{s}}\right)(-\Delta)^s_{n/s}u & = \mu u^{-q}+ u^{r-1}\exp( u^{\beta})\;\text{in}\;\Om,\\
        u&>0,\;\text{in}\; \Om,\\
        u &= 0,\;\text{in}\; \mb R^n \setminus{\Om},
    \end{split}
    \right.
\end{equation*}
where $\Om$ is smooth bounded domain in $\mb R^n$, {$n\geq 1$}, $s\in (0,1)$, $\mu>0$ is a real parameter, $\beta <\frac{n}{n-s}$ and $q\in (0,1)$.
We have considered the degenerate Kirchhoff case here and used the Nehari manifold techniques to obtain the results. 
\end{abstract}

\subjclass[2010]{35J60;  35A15, 49J52.}
\keywords{Nonlinear elliptic equations, Variational methods, Nonsmooth analysis}

\maketitle

\section{Introduction}
We are interested to study the multiple solutions of the following doubly nonlocal problem which involves a singular term and {subcritical} nonlinearity of exponential type
\begin{equation*}
   (P_{\mu}) \left\{\begin{split}
        M\left(\|u\|^{\frac{n}{s}}\right)(-\Delta)^s_{n/s}u & = \mu u^{-q}+ u^{r-1}\exp( u^{\beta})\;\text{in}\;\Om,\\
        u&>0,\;\text{in}\; \Om,\\
        u &= 0,\;\text{in}\; \mb R^n \setminus{\Om},
    \end{split}
    \right.
\end{equation*}
where $\Om$ is smooth bounded domain in $\mb R^n$, {$n\geq 1$}, $s\in (0,1)$, $\mu>0$ is a real parameter, $\beta <\frac{n}{n-s}$ and $q\in (0,1)$. We consider the degenerate Kirchhoff case with subcritical nonlinearity 
where we precisely assume that the function $M: \mb R^+ \to \mb R^+$ is defined as\[M(t)= bt^{\theta -1},\; \text{for} \; t>0,\]
where   $\theta >1$ such that $r> \frac{\theta n}{s}$. The  $n/s$-fractional Laplace operator (up to normalizing factor) $(-\Delta)_{n/s}^s$ is defined as 
\[(-\Delta)_{n/s}^su(x) := \lim_{\epsilon\to 0}\int_{\mb R^n\setminus B_\epsilon(x)} \frac{|u(x)-u(y)|^{\frac{n}{s}-2}(u(x)-u(y))}{|x-y|^{2n}}~dy,\;\;\text{for}\;\; x \in \mb R^n\]
where $u\in C_c^\infty(\mb R^n)$ and $B_\epsilon(x)$ denotes ball in $\mb R^n$ with centre $x$ and radius $\epsilon$. In literature, Kirchhoff problems has been divided into two cases- \textit{degenerate case} when $M(0)=0$ and \textit{non degenerate case} when $M(t)\geq a>0$ for some constant $a$, for all $t\in \mb R^+$.
The following singular Kirchhoff problem involving fractional Laplacian and Sobolev critical nonlinearity
\begin{equation}\label{KSE-lit-1}
  \begin{split}
        M\left(\int\int_{\mb R^{2n}}\frac{|u(x)-u(y)|^{2}}{|x-y|^{n+2s}}~dxdy\right)(-\Delta)^s u & = \la u^{-q}+ g(x)u^{2^*_s-1}\;\text{in}\;\Om,\\
        u&>0,\;\text{in}\; \Om,\\
        u &= 0,\;\text{in}\; \mb R^n \setminus{\Om},
    \end{split}
\end{equation}
where $n>2s$, $q\in (0,1)$ and $2^*_s=\frac{2n}{n-2s}$ has been recently studied in \cite{fisc} and \cite{fisc-pawan}. In \cite{fisc}, A. Fiscella proved that \eqref{KSE-lit-1} admits at least two distinct solutions in the degenerate case when $g\equiv 1$ and $\la$ has a certain specific range using truncation method along with Mountain Pass theorem. Whereas, authors in \cite{fisc-pawan} used the Nehari manifold technique to establish existence of at least two distinct solutions to \eqref{KSE-lit-1} in the non degenerate case when $g \in L^\infty(\Om)$ is sign-changing function. We also refer a very recent article \cite{WCZ}. Motivated by these articles, we studied $(P_\mu)$ to prove that it has at least two weak solutions and the novelty of the article lies in the fact that the Kirchhoff singular problem with exponential nonlinearity has not even been studied in the Laplacian case. We has used the Nehari manifold technique to prove our results. So the article deals with a complete open case of singular Kirchhoff problems with exponential nonlinearity and contains new estimates.

We now present some references to articles in literature dealing with singular problems for the readers.  Consider the following $p$-Laplace equation
\begin{equation}\label{plap}
-\Delta_p u=\la\frac{f(x)}{u^{q}}+\mu u^{r}\; \text{in}\;\Om;\\
~~u>0\;\text{in}\; \Om;\\
~~u=0\;\text{on}\; \partial\Om.
\end{equation}
For $p=2$, $\mu=0$ and $f(x)\equiv 1$, existence of a classical solution to the problem \eqref{plap} is proved in \cite{CRT} for any $q>0.$ Later for certain restricted range of $q$, the existence of weak solution was proved in \cite{LMckena}. {This restriction on $q$} was removed in \cite{Boc3} to obtain existence of at least one weak solution. Indeed the authors in \cite{Boc3} proved the existence of solution in $H_{0}^1(\Om)$ for $0<q\leq 1$ and in $H^{1}_{loc}(\Om)$ for $q>1$. The case of $p\in(1,\infty)$ was settled in \cite{Canino1}, where existence of weak solution in $W_{0}^{1,p}(\Om)$ was proved for $0<q\leq 1$ and in $W^{1,p}_{loc}(\Om)$ for $q>1.$

On the other hand, for $p=2,$ $f(x)\equiv \mu=1$ and $1<r\leq 2^*= \frac{2N}{N-2}$, multiplicity of weak solutions was established using Nehari manifold and sub super solution techniques in \cite{YHaitao,hirano1} for $0<q<1.$ Whereas the case of any $q>0$ was settled in \cite{DArcoya, hirano2}. In the nonlinear case that is for $p\in(1,\infty),$ authors in \cite{GST} answered the question of existence, multiplicity and regularity of weak solutions in the case of $0<q<1$ which was further extended to the case of $q\geq 1$ in \cite{BalGarain} deducing the  multiplicity of weak solutions. In context of singular problems involving fractional Laplace operator, authors in \cite{peral-frac} studied the following problem
\begin{equation*}
(-\Delta)^s u = \la\frac{f(x)}{u^\gamma} + M u^{p}, \;
 u>0\;\text{in}\;\Om, \quad
 u = 0 \; \mbox{in}\; \mb R^N \setminus\Om,
\end{equation*}
where  $N>2s$, $M\geq 0$, $0<s<1$, $\gamma,\;\la>0$, $1<p<2_{s}^{*}-1,\; 2^*_s= 2N/(N-2s)$, $f\in L^{m}(\Om)$ for $m\geq 1$ is a non-negative function and
\[ (-\Delta)^s u(x) = - \frac{1}{2}\int_{\mb R^N} \frac{u(x+y)+u(x-y)-2u(x)}{|y|^{N+2s}}~dy , \; \; \text{for all }\; x \in \mb R^N.\] Here authors studied the existence of distributional solutions for small $\la$ using the uniform estimates of  $\{u_n\}$ which are solutions of the regularized problems with singular  term $u^{-\gamma}$ replaced by $(u+\frac{1}{n})^{-\gamma}$. This was extended for the $p$-fractional Laplace operator by Canino et al. in \cite{Canino3}. In the critical case for $0<q<1$, the question of existence and multiplicity of weak solutions to nonlocal singular problems has been answered in \cite{mukh-tmna, mukh-ejde, mukh-anona} whereas $q\geq 1$ case has been dealt in \cite{mukh-anona1}. We also refer a recent article \cite{divya} related to nonlocal singular problem with exponential nonlinearity. The Kirchhoff singular problems involving Laplace operator has been investigated in \cite{llt,lzlt}.\\

\noi This article is divided into five sections. Section $2$ contains the preliminaries for the article and its main result. The heart of Nehari manifold technique that is the fibering map analysis is carried out in Section $3$. Finally the existence of first and second solution to $(P_\mu)$ has been established in Section $4$ and $5$ respectively. 
\section{Preliminaries}
\noi Let us consider the usual fractional Sobolev space
\[W^{s,p}(\Om):= \left\{u\in L^{p}(\Om); \frac{(u(x)-u(y))}{|x-y|^{\frac{n}{p}+s}}\in L^{p}(\Om\times\Om)\right\}\]
 endowed with the norm
\begin{align*}
\|u\|_{W^{s,p}(\Om)}=\|u\|_{L^p(\Om)}+ \left(\int_{\Om}\int_{\Om}
\frac{|u(x)-u(y)|^{p}}{|x-y|^{n+ps}}dxdy \right)^{\frac 1p}
\end{align*}
where $\Om \subset \mb R^n$ is an open set. {We denote $W^{s,p}_0(\Omega)$ as the completion of the space $C_c^\infty(\Omega)$ with respect to the norm $\|\cdot\|_{W^{s,p}(\Omega)}$.}
Now we define
 \[ X_0 = \{u\in W^{s,n/s}(\mb R^n) : u = 0 \;\text{in}\; \mb R^n\setminus \Om\}\]
with respect to the norm
\[\|u\|_{X _0}=\left( \int_{\mb R^{n}}\int_{\mb R^n}\frac{|u(x)- u(y)|^{\frac{n}{s}}}{|x-y|^{2n}}dx
dy\right)^{\frac sn}= \left(\int_{Q}\frac{|u(x)- u(y)|^{\frac{n}{s}}}{|x-y|^{2n}}dx
dy\right)^{\frac sn},\]
where  $Q=\mb R^{2n}\setminus(\mc C\Om\times \mc C\Om)$ and
 $\mc C\Om := \mb R^n\setminus\Om$. Then $X_0$ is a reflexive Banach space and continuously embedded in $W^{s,p}_0(\Om)$. Also $X_0 \hookrightarrow \hookrightarrow L^q(\Om)$ compactly for each $q \in [1,\infty)$. Note that the norm
$\|.\|_{X_0}$ involves the interaction between $\Om$ and $\mb
R^n\setminus\Om$. We denote $\|.\|_{X_0}$ by $\|.\|$ in future, for notational convenience in future.

The study of elliptic equations involving nonlinearity with exponential growth are motivated by
the following Trudinger-Moser inequality in \cite{martinazi}, namely
\begin{theorem} \label{moser}
Let $\Om$ be a open bounded domain then we define $\tilde{W}^{s,n/s}_{0}(\Om)$ as the completion of $C_{c}^{\infty}(\Om)$ with respect to the norm $\|u\|^{\frac{n}{s}}= \displaystyle\int_{\mb R^{n}}\int_{\mb R^n} \frac{|u(x)-u(y)|^{\frac{n}{s}}}{|x-y|^{2n}} dx dy$. Then there exists a positive constant $\alpha_{n,s}$ given by
\[\alpha_{n,s}= \frac{n}{\omega_{n-1}}\left( \frac{\Gamma(\frac{n-s}{2})}{\Gamma(s/2)2^s \pi^{n/2}}\right)^{-\frac{n}{n-s}},\]
where $\omega_{n-1}$ be the surface area of the unit sphere in $\mb R^n$ and $C_{n,s}$ depending only on $n$ and $s$ such that
\begin{equation}\label{TM-ineq}
    \sup_{u \in \tilde{W}^{s,n/s}_{0}(\Om),\; \|u\|\leq 1}\int_\Om \exp\left( \alpha |u|^{\frac{n}{n-s}}\right)~dx\leq C_{n,s}|\Om|
    \end{equation}
for each $\alpha \in [0,\alpha_{n,s}]$.
Moreover there exists a $ \alpha_{n,s}^* \geq \alpha_{n,s}$ such that {the right hand side of\eqref{TM-ineq} is $+\infty$} for $\alpha>\alpha_{n,s}^*$.
\end{theorem}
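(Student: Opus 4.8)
Theorem~\ref{moser} is the fractional Trudinger--Moser inequality, quoted here from \cite{martinazi}; my plan is therefore to recall the structure of its proof rather than to attempt a new argument, the more so since only the finiteness statement \eqref{TM-ineq} for $\alpha\le\alpha_{n,s}$ is needed later on. \textbf{Step 1 (symmetrisation).} Given $u\in\tilde W^{s,n/s}_0(\Om)$ with $\|u\|\le1$, replace $u$ by $|u|$ and then by its symmetric decreasing rearrangement $u^{*}$, which is supported in the ball $B_R$ with $|B_R|=|\Om|$. By the fractional P\'olya--Szeg\H o inequality the Gagliardo seminorm does not increase, so $\|u^{*}\|\le1$; by equimeasurability $\int_{\Om}\exp(\alpha|u|^{n/(n-s)})\,dx=\int_{B_R}\exp(\alpha|u^{*}|^{n/(n-s)})\,dx$, the contributions away from the supports being $|\Om|=|B_R|$ on each side. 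Hence it suffices to treat radial, non-increasing $u$ supported in a ball.

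\textbf{Step 2 (reduction to a one-dimensional problem and the sharp constant).} For such $u$ I would apply a logarithmic change of variables in the spirit of Moser, writing $|x|=Re^{-\sigma(t)}$ for a suitable $\sigma$, and set $\varphi(t):=\kappa\,u(|x(t)|)$ with $\kappa$ the normalising constant attached to the fundamental solution of $(-\Delta)^s_{n/s}$; it is precisely this $\kappa$ that brings in the factor $\big(\Gamma(\tfrac{n-s}{2})/(\Gamma(\tfrac s2)2^{s}\pi^{n/2})\big)^{-n/(n-s)}$ and hence produces $\alpha_{n,s}$. The key point is a lower bound for the double integral defining $\|u\|^{n/s}$ by $\int_{0}^{\infty}|\varphi'(t)|^{n/s}\,dt$ up to lower-order terms, obtained by restricting the integral to a neighbourhood of the diagonal and using the monotonicity of $u$. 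One is then left with the scalar Moser lemma: if $\varphi(0)=0$ and $\int_{0}^{\infty}|\varphi'|^{n/s}\le1$, then $\int_{0}^{\infty}\exp\!\big(\alpha_{n,s}|\varphi(t)|^{n/(n-s)}-t\big)\,dt\le C_{n,s}$, which follows by splitting according to whether $|\varphi(t)|^{n/(n-s)}\le t$ or not and applying H\"older on the second region. Undoing the change of variables then yields \eqref{TM-ineq}.

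\textbf{Step 3 (the threshold $\alpha^{*}_{n,s}$, and the main difficulty).} For the last assertion one tests the supremum against a fractional Moser sequence: truncated, regularised multiples of the fundamental solution of $(-\Delta)^s_{n/s}$ concentrating at an interior point of $\Om$ and normalised to have unit norm; a direct computation shows the integral diverges once $\alpha$ exceeds an explicit $\alpha^{*}_{n,s}\ge\alpha_{n,s}$. The genuine obstacle, compared with the classical local case, is that $\|\cdot\|$ is a double integral over $\mathbb R^{2n}$ which really feels the exterior of $\Om$ and is not additive over regions, so the off-diagonal part of the Gagliardo seminorm must be controlled carefully in Step 2 in order to retain the \emph{sharp} constant $\alpha_{n,s}$; this is the delicate point settled in \cite{martinazi}, on which I would rely.
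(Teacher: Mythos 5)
The paper offers no proof of Theorem \ref{moser}: the result is quoted directly from \cite{martinazi}, with the explicit value of $\alpha^*_{n,s}$ taken from Proposition 5.2 of \cite{ruf}, so your decision to defer to the cited reference is exactly what the paper does. Your accompanying sketch (symmetrisation, logarithmic change of variables, scalar Moser lemma) is a reasonable outline of how such inequalities are established for the Gagliardo seminorm --- in spirit it is closer to the Slobodeckij-space argument of \cite{ruf} than to Martinazzi's Riesz-potential/Adams route --- but since the theorem is used here purely as a black box, nothing beyond the citation is required.
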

It is proved in \cite{ruf} (see Proposition 5.2) that
\[\alpha_{n,s}^*= n \left(\frac{2(n\mc W_n)^2 \Gamma(\frac{n}{s}+1)}{n!}\sum_{i=0}^{\infty} \frac{(n+i-1)!}{i! (n+2i)^{\frac{n}{s}}}\right)^{\frac{s}{n-s}},\]
where $\mc W_n= \frac{w_{n-1}}{n}$ {is} the volume of the unit sphere in $\mb R^n$. It {is still} unknown whether $ \alpha_{n,s}^* = \alpha_{n,s}$ {or not}.
\begin{definition}
We call a $u \in X_0$ as a weak solution to $(P_{\mu})$ if for any $\varphi\in X_0$,  $u^{-q}\varphi \in L^1(\Om)$ and it satisfies
\begin{align*}
&(\|u\|^{\frac{n\theta}{s}})\int_{\mb R^{2n}}\frac{|u(x)-u(y)|^{\frac{n}{s}-2} (u(x)-u(y))(\varphi(x)-\varphi(y))}{|x-y|^{2n}}~dxdy\\
\quad &= \mu\int_{\Om}(u^+)^{-q}\varphi~dx + \int_{\Om} (u^+)^{r-1}\exp((u^+)^{\beta})\varphi~dx
\end{align*}
\end{definition}
Keeping in mind the requirement of positive solution and presence of singular term in $(P_{\mu})$, we have considered '$u^+$' instead of '$u$' to define our weak solution.
Let us set $f(z)=  z^{r-1}\exp( z^{\beta})$ for $z\geq 0$ and $F(z)= \int_0^z f(t)~dt$.

It is obvious to notice that $(P_\mu)$ has a variatonal characterization so that the corresponding energy functional $I_{\mu}: X_0\to \mb R$ is given by
\[I_\mu(u) := \frac{s}{n\theta}\|u\|^{\frac{n\theta}{s}} -\frac{\mu}{1-q}\int_{\Om}(u^+)^{1-q}~dx- \int_{\Om}F(u^+)~dx.\]
Certainly because of presence of singular term, $I_\mu$ fails to be Frech\'{e}t differentiable but $q\in (0,1)$ certainly makes it a continuous functional on $X_0$. So we can not rely upon the critical point theory to prove the existence results for $(P_\mu)$ and hence appropriately use the fibering maps and the Nehari manifold set to carry out the research.

We state our main result.
\begin{theorem}\label{KSE-MT}
There exists a $\mu_0>0$ such that whenever $\mu \in (0,\mu_0)$, $(P_\mu)$ admits at least two non negative solutions in $X_0$.
\end{theorem}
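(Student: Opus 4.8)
The plan is to run the Nehari manifold method despite the non-differentiability of $I_\mu$, replacing critical point theory by a careful analysis of the fibering maps $\phi_u(t):=I_\mu(tu)$, $t>0$, for directions $u\in X_0$ with $u^+\not\equiv0$. One sets $\mc N_\mu:=\{u\in X_0\setminus\{0\}:\phi_u'(1)=0\}$ and decomposes $\mc N_\mu=\mc N_\mu^+\cup\mc N_\mu^0\cup\mc N_\mu^-$ according to the sign of $\phi_u''(1)$. The first step (Section~3) is the fibering analysis: using $\phi_u'(1)=0$ one gets
\[\phi_u''(1)=\Big(\tfrac{n\theta}{s}-1+q\Big)\mu\int_\Om(u^+)^{1-q}\,dx+\int_\Om f(u^+)u^+\Big(\tfrac{n\theta}{s}-r-\beta(u^+)^\beta\Big)dx,\]
whose second integrand is strictly negative because $r>\tfrac{n\theta}{s}$; combined with the Sobolev bound $\int_\Om(u^+)^{1-q}\,dx\le C\|u\|^{1-q}$ and the Trudinger--Moser inequality of Theorem~\ref{moser} to control the exponential terms (recall $\beta<\tfrac{n}{n-s}$), this produces a $\mu_0>0$ so that for $\mu\in(0,\mu_0)$ one has $\mc N_\mu^0=\emptyset$, every admissible direction $u$ has exactly two critical values $0<t^+(u)<t^-(u)$ of $\phi_u$ with $t^+(u)u\in\mc N_\mu^+$, $t^-(u)u\in\mc N_\mu^-$, and $t^\pm$ depend continuously on $u$ (implicit function theorem, since $\phi_u''\ne0$ there). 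Rewriting $I_\mu$ on $\mc N_\mu$ and using $F(z)\le\tfrac1r f(z)z$ together with $r>\tfrac{n\theta}{s}$ shows $I_\mu$ is coercive on $\mc N_\mu$, $\mc N_\mu^+$ is bounded while $\|u\|\ge c_0>0$ on $\mc N_\mu^-$, and (shrinking $\mu_0$) $\Theta^+:=\inf_{\mc N_\mu^+}I_\mu\in(-\infty,0)$ and $\Theta^-:=\inf_{\mc N_\mu^-}I_\mu\in(0,+\infty)$. These uniform fibering estimates, combining the singular term with the exponential Trudinger--Moser term, are the new technical input.

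For the first solution (Section~4) one minimises $I_\mu$ over $\mc N_\mu^+$. By coercivity a minimising sequence $\{u_n\}$ is bounded, so up to a subsequence $u_n\rightharpoonup u_1$ in $X_0$, $u_n\to u_1$ in $L^p(\Om)$ for every $p\in[1,\infty)$ and a.e.\ in $\Om$. The key point is that all lower-order terms pass to the limit strongly: $\int_\Om(u_n^+)^{1-q}\to\int_\Om(u_1^+)^{1-q}$ by dominated convergence, while $\int_\Om f(u_n^+)u_n^+$, $\int_\Om F(u_n^+)$, $\int_\Om f'(u_n^+)(u_n^+)^2$ converge by the Vitali convergence theorem, since $\beta<\tfrac n{n-s}$ gives $z^\beta\le\delta z^{n/(n-s)}+C_\delta$, whence $\exp(\alpha(u_n^+)^\beta)$ is bounded in every $L^m(\Om)$ by Theorem~\ref{moser}, and a H\"older argument yields uniform integrability of these nonlinear terms. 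Because $\Theta^+<0$, the limit satisfies $u_1^+\not\equiv0$ (else $\|u_n\|\to0$ and $I_\mu(u_n)\to0$). If $\|u_1\|^{n\theta/s}<L:=\lim_n\|u_n\|^{n\theta/s}$ then $\phi_{u_1}'(1)<0$; comparing $\phi_{u_1}$ with the limiting fibering map $\widetilde\phi(t):=\phi_{u_1}(t)+\tfrac s{n\theta}(L-\|u_1\|^{n\theta/s})t^{n\theta/s}=\lim_n\phi_{u_n}(t)$, whose first critical point is $t=1$ with value $\Theta^+$, forces $t^+(u_1)>1$, and then $\phi_{u_1}(t^+(u_1))\le\phi_{u_1}(1)<\widetilde\phi(1)=\Theta^+$, contradicting $t^+(u_1)u_1\in\mc N_\mu^+$. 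Hence $\|u_n\|\to\|u_1\|$, so (uniform convexity of $X_0$) $u_n\to u_1$ strongly, $u_1\in\mc N_\mu^+$ (using $\mc N_\mu^0=\emptyset$) and $I_\mu(u_1)=\Theta^+$; a standard observation based on $I_\mu(u^+)\le I_\mu(u)$ lets us take $u_1\ge0$. Finally, for $\varphi\in X_0$, $\varphi\ge0$, and small $\varepsilon>0$, $t^+(u_1+\varepsilon\varphi)\to1$ and $I_\mu(u_1+\varepsilon\varphi)\ge I_\mu\big(t^+(u_1+\varepsilon\varphi)(u_1+\varepsilon\varphi)\big)\ge\Theta^+=I_\mu(u_1)$; dividing by $\varepsilon$, differentiating the Kirchhoff and $F$ parts and using monotone convergence with the concavity of $z\mapsto z^{1-q}$ for the singular part gives $u_1^{-q}\varphi\in L^1(\Om)$ and the Euler--Lagrange inequality ``$\ge$''; testing instead with $(u_1+\varepsilon\varphi)^+$ for arbitrary $\varphi$ and letting $\varepsilon\to0$ (the set $\{u_1+\varepsilon\varphi\le0\}$ shrinks because $u_1>0$ a.e.) upgrades this to the equality of the Definition, so $u_1$ is a nonnegative weak solution with $u_1>0$ a.e.

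For the second solution (Section~5) one repeats the scheme over $\mc N_\mu^-$. A minimising sequence $\{u_n\}\subset\mc N_\mu^-$ is bounded by coercivity; passing to $u_n\rightharpoonup u_2$ as above, all lower-order terms converge strongly, and since $\|u_n\|\ge c_0$ and $\Theta^->0$ the limit satisfies $u_2^+\not\equiv0$. To prove $u_n\to u_2$ strongly, set $L:=\lim_n\|u_n\|^{n\theta/s}=\mu\int_\Om(u_2^+)^{1-q}+\int_\Om f(u_2^+)u_2^+$; if $\|u_2\|^{n\theta/s}<L$, writing $\widetilde\phi(t):=\phi_{u_2}(t)+\tfrac s{n\theta}(L-\|u_2\|^{n\theta/s})t^{n\theta/s}=\lim_n\phi_{u_n}(t)$ one has $\phi_{u_2}(t)<\widetilde\phi(t)$ for all $t>0$, while $\widetilde\phi(t)=\lim_nI_\mu(tu_n)\le\limsup_n\big(\sup_{\tau>0}\phi_{u_n}(\tau)\big)=\limsup_nI_\mu(u_n)=\Theta^-$ (for $n$ large $I_\mu(u_n)>0$, so $\sup_{\tau>0}\phi_{u_n}(\tau)=\phi_{u_n}(1)=I_\mu(u_n)$). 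Evaluating at the local maximum point $\hat t$ of $\phi_{u_2}$, for which $\hat t\,u_2\in\mc N_\mu^-$, gives $\Theta^-\le\phi_{u_2}(\hat t)<\widetilde\phi(\hat t)\le\Theta^-$, a contradiction. Hence $u_n\to u_2$ strongly, $u_2\in\mc N_\mu^-$ (again using $\mc N_\mu^0=\emptyset$) and $I_\mu(u_2)=\Theta^-$; since $\Theta^->0>\Theta^+$ we automatically get $u_2\ne u_1$. The same one-sided perturbation argument (taking $u_2\ge0$) shows $u_2$ is a second nonnegative weak solution with $u_2>0$ a.e., proving the theorem with $\mu_0$ as fixed in Section~3.

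I expect the main obstacle to be twofold. First, establishing the \emph{uniform} threshold $\mu_0$ in the fibering analysis: one must bound the maximum of $t\mapsto t^{\frac{n\theta}{s}-1+q}-t^q\!\int_\Om f(tu^+)u^+\,dx$ from below uniformly over $\|u\|=1$ while bounding $\mu\int_\Om(u^+)^{1-q}$ from above, which forces delicate Trudinger--Moser estimates on the exponential nonlinearity --- the genuinely new estimates of the paper. Second, the compactness step for the $\mc N_\mu^-$-minimisation, where one must simultaneously rule out a vanishing weak limit, keep the exponential term uniformly integrable along the bounded sequence, and exclude ``strictly weak'' convergence via the fibering-map comparison above. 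Throughout, the non-differentiability of $I_\mu$ means the Euler--Lagrange equation can only be extracted through one-sided perturbations together with Fatou and monotone-convergence arguments on the singular term.
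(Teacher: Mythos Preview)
Your overall strategy is correct and follows the same Nehari-manifold skeleton as the paper (fibering analysis, $\mc N_\mu^0=\emptyset$ for small $\mu$, coercivity, minimisation on $\mc N_\mu^\pm$), but the \emph{execution} diverges at two key points, so the approaches are genuinely different.

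\medskip
\textbf{Where the routes diverge.} The paper does \emph{not} deduce strong convergence or the Euler--Lagrange relation by the fibering-comparison/perturbation trick you use. Instead it applies Ekeland's variational principle on $\mc N_\mu^+\cup\{0\}$ (resp.\ $\mc N_\mu^-$) to obtain a minimising sequence $\{u_k\}$ satisfying \eqref{KSE-EVP1}--\eqref{KSE-EVP2}, then builds, via the implicit function theorem, differentiable projections $\xi_k:B(0,\epsilon_k)\to\mb R^+$ with $\xi_k(v)(u_k-v)\in\mc N_\mu^+$ (Lemma~\ref{KSE-lemma4}). A delicate computation (Lemma~\ref{KSE-lemma8}) shows $|\langle\xi_k'(0),\varphi\rangle|$ is uniformly bounded, which is then fed into Lemma~\ref{KSE-lemma9} to produce an \emph{approximate Euler--Lagrange identity for the sequence itself}; strong convergence is obtained by testing this identity with $u_k-u_0$ together with Brezis--Lieb, and the weak-solution property of $u_0$ is inherited by passing to the limit. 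Your route bypasses the Ekeland/$\xi_k$ machinery entirely: you get strong convergence from a fibering-map comparison (if $\|u_1\|<\lim\|u_n\|$, then $t^+(u_1)u_1$ beats $\Theta^+$), and extract the equation by perturbing the minimiser directly ($t^+(u_1+\epsilon\varphi)\to 1$). This is more elementary and is the method used in e.g.\ \cite{YHaitao,hirano1}; what the paper's heavier approach buys is a Palais--Smale-type control along the sequence that makes the singular-term integrability ($u_k^{-q}\varphi\in L^1$) and the passage to the limit more mechanical, and avoids any appeal to properties of $t^\pm(\cdot)$ as maps.

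\medskip
\textbf{Two points to tighten.} First, you assert $\Theta^-\in(0,+\infty)$ and then use ``for $n$ large $I_\mu(u_n)>0$, so $\sup_{\tau>0}\phi_{u_n}(\tau)=\phi_{u_n}(1)$'' in your $\mc N_\mu^-$ compactness step. The paper never proves $\Theta^->0$ (it only shows $\|v\|\ge C_0$ on $\mc N_\mu^-$, hence closedness), and your argument as written depends on it; you should either justify $\Theta^->0$ for small $\mu$, or replace the global supremum by $\sup_{\tau\ge t^*(u_n)}\phi_{u_n}(\tau)=\phi_{u_n}(1)$ from Theorem~\ref{KSE-theorem1}, which suffices since $t^-(u_2)\ge t^*(u_2)$ and $t^*(u_n)$ stays bounded away from $\infty$. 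Distinctness of the two solutions does not need $\Theta^->0$ anyway, since $\mc N_\mu^+\cap\mc N_\mu^-=\emptyset$. Second, in your $\mc N_\mu^+$ fibering comparison you deduce from $\phi_{u_1}'(1)<0$ that $t^+(u_1)>1$; strictly speaking $\phi_{u_1}'(1)<0$ allows either $1<t^+(u_1)$ or $1>t^-(u_1)$, so you need one extra line (e.g.\ pass to the limit in $\phi_{u_n}''(1)>0$ to get $\widetilde\phi''(1)\ge 0$, forcing $1$ to be the local-min branch of $\widetilde\phi$, hence of $\phi_{u_1}$ for $\|u_1\|$ close to $L$).
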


\section{Fibering Map Analysis}
\noi For fixed $u \in X_0$, we define the fiber map $\phi_u: \mb R^+ \to \mb R $ as $\phi_u(t)= I_\mu(tu)$ for $t>0$. Precisely,
\[\phi_u (t) =  \frac{s}{n\theta}\|tu\|^{\frac{n\theta}{s}} -\frac{t^{1-q}\mu}{1-q}\int_{\Om}(u^+)^{1-q}~dx- \int_{\Om}F(tu^+)~dx\]
so that we can easily derive
\[\phi_u^\prime(t) =  t^{\frac{n\theta}{s}-1}\|u\|^{\frac{n\theta}{s}}-\mu t^{-q}\int_{\Om}(u^+)^{1-q}~dx-\int_{\Om}f(tu^+)u^+~dx\]
and
\[\phi_u^{\prime \prime}(t) = \left(\frac{n\theta}{s}-1\right) t^{\frac{n\theta}{s}-2}\|u\|^{\frac{n\theta}{s}} +q\mu t^{-q-1}\int_{\Om}(u^+)^{1-q}~dx-\int_{\Om}f^\prime(tu^+)(u^+)^2~dx.\]
Now we set our Nehari manifold as follows
\[\mc N_{\mu}=\{u\in X_0:\; \phi_u^\prime(1)=0\}\]
which means that every weak solution of $(P_{\mu})$ naturally belongs to $\mc N_\mu$. Depending on the nature of critical points of $\phi_u$, we subdivide $\mc N_\mu$ into following three sets-
\[\mc N_{\mu}^{\pm}=\{u\in \mc N_{\mu}:\; \pm\phi_u^{\prime \prime}(1)>0\}\; \text{and}\; \mc N_\mu:= \{u \in \mc N_{\mu}:\; \phi_u^{\prime \prime}(1)=0\}.\]
It is easy to see that $tu \in \mc N_\la$ if and only if $\phi^\prime_u(t)=0$. Since $\phi_u^\prime (t)=0$ iff
\[m_u(t)= \mu\int_{\Om}(u^+)^{1-q}~dx,\]
where
\[m_u(t) :=  t^{\frac{n\theta}{s}+q-1}\|u\|^{\frac{n\theta}{s}}-\la t^{q}\int_{\Om}f(tu^+)u^+~dx, \; \text{for}\; t>0,\]
we will study the map $m_u(t)$ now. It is easy that $m_u(0)=0$ and since $\frac{n\theta}{s}+q-1< q+r-1$ so $t^{q+r-1}< t^{\frac{n\theta}{s}+q-1}$ and $f$ has exponential growth which implies that $m_u(t)>0$ for small enough $t$. Moreover,
\[\lim_{t\to +\infty}m_u(t)=-\infty.\]
Thus there exists at least one $t^*>0$ such that it is point of local maximum and satisfies
\begin{equation}\label{KSE-60}
m_u^\prime(t^*)=0
\end{equation}
which gives us that
\begin{equation}\label{KSE-1}
\|t^*u\|^{\frac{n\theta}{s}}=\frac{\left(\displaystyle q\int_{\Om}f(t^*u^+)t^*u^+~dx +\int_\Om f^\prime(t^*u^+)(t^*u^+)^2~dx\right)}{\left(\frac{n\theta}{s}+q-1\right)}
\end{equation}

\begin{lemma}\label{KSE-lemma1}
Let
\[\Gamma:= \left\{u\in X_0:\; {\|u\|^{\frac{n\theta}{s}} \leq \frac{\left(\displaystyle q\int_{\Om}f(u^+)u^+~dx +\int_\Om f^\prime(u^+)(u^+)^2~dx\right)}{\left(\frac{n\theta}{s}+q-1\right)}}\right\}\]
and
\[\Gamma_0:= \inf_{u\in\Gamma\setminus \{0\}} \left\{ \left(1-\frac{n\theta}{s}\right)\int_{\Om}f(u^+)u^+~dx+ f^\prime(u^+)(u^+)^2~dx-\mu \left(\frac{n\theta}{s}+q-1\right)\int_\Om (u^+)^{1-q}~dx\right\}.\]
Then there exists a $\mu_0>0$ such that $\Gamma_0>0$ when $\mu \in (0,\mu_0)$.
\end{lemma}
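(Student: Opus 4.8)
The plan is to unfold the definition of $\Gamma_0$ into a quantity whose positivity is transparent. Since $f(z)=z^{r-1}\exp(z^\beta)$, one has $f(z)z=z^{r}\exp(z^\beta)$ and $f'(z)z^{2}=(r-1)z^{r}\exp(z^\beta)+\beta z^{r+\beta}\exp(z^\beta)$; writing $A(u):=\int_\Om (u^+)^{r}\exp((u^+)^\beta)\,dx$ and $B(u):=\int_\Om (u^+)^{r+\beta}\exp((u^+)^\beta)\,dx$, the bracket defining $\Gamma_0$ becomes
\[
\Phi(u):=\Big(r-\tfrac{n\theta}{s}\Big)A(u)+\beta B(u)-\mu\Big(\tfrac{n\theta}{s}+q-1\Big)\int_\Om (u^+)^{1-q}\,dx ,
\]
and the inequality defining $\Gamma$ becomes $\big(\tfrac{n\theta}{s}+q-1\big)\|u\|^{\frac{n\theta}{s}}\le (q+r-1)A(u)+\beta B(u)$. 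The hypothesis $r>\tfrac{n\theta}{s}$ (and $\beta>0$) makes the first two terms of $\Phi(u)$ nonnegative, so everything rests on showing that this exponential part dominates the $\mu$-term uniformly on $\Gamma\setminus\{0\}$.

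First I would establish a uniform lower bound $\|u\|\ge\delta_1>0$ for $u\in\Gamma\setminus\{0\}$. Fix conjugate exponents $a,a'\in(1,\infty)$ and put $M_0:=(\alpha_{n,s}/a')^{(n-s)/n}$. If $\|u\|>M_0$ nothing is needed. If $\|u\|\le M_0$, H\"older's inequality gives $A(u)\le\|u^+\|_{L^{ra}(\Om)}^{r}\big(\int_\Om\exp(a'(u^+)^\beta)\,dx\big)^{1/a'}$, and similarly for $B(u)$; the last integral is controlled by splitting $\Om$ into $\{u^+\le 1\}$, where $\exp(a'(u^+)^\beta)$ is bounded, and $\{u^+>1\}$, where $\beta<\tfrac{n}{n-s}$ yields $\exp(a'(u^+)^\beta)\le\exp\big(\alpha_{n,s}(u^+/\|u\|)^{n/(n-s)}\big)$ because $a'\|u\|^{n/(n-s)}\le\alpha_{n,s}$, so that Theorem~\ref{moser} (together with $\|u^+\|\le\|u\|$) bounds it by $C_{n,s}|\Om|$. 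Using the compact embeddings $X_0\hookrightarrow L^{ra}(\Om)$ and $X_0\hookrightarrow L^{(r+\beta)a}(\Om)$ this gives $A(u)\le C\|u\|^{r}$ and $B(u)\le C\|u\|^{r+\beta}$; inserting these into the $\Gamma$-inequality and using $r,r+\beta>\tfrac{n\theta}{s}$ forces $\|u\|\ge\delta_0$ for a suitable $\delta_0>0$. Hence $\|u\|\ge\delta_1:=\min\{M_0,\delta_0\}>0$ on all of $\Gamma\setminus\{0\}$.

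Next I would produce two one-sided estimates comparing the terms of $\Phi(u)$ with $\|u\|$. Since $A(u),B(u)\ge 0$, the identity
\[
\Big(r-\tfrac{n\theta}{s}\Big)A(u)+\beta B(u)=\frac{r-\tfrac{n\theta}{s}}{q+r-1}\big[(q+r-1)A(u)+\beta B(u)\big]+\beta\,\frac{\tfrac{n\theta}{s}+q-1}{q+r-1}B(u),
\]
in which both coefficients are positive because $\tfrac{n\theta}{s}>1>1-q$ and $r>\tfrac{n\theta}{s}$, combined with the $\Gamma$-inequality gives $\big(r-\tfrac{n\theta}{s}\big)A(u)+\beta B(u)\ge K_1\|u\|^{\frac{n\theta}{s}}$ with $K_1:=\tfrac{(r-n\theta/s)(n\theta/s+q-1)}{q+r-1}>0$; on the other side, H\"older's inequality and $X_0\hookrightarrow L^{r}(\Om)$ give $\int_\Om (u^+)^{1-q}\,dx\le C_1\|u\|^{1-q}$. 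Therefore, for every $u\in\Gamma\setminus\{0\}$,
\[
\Phi(u)\ \ge\ K_1\|u\|^{\frac{n\theta}{s}}-\mu K_2\|u\|^{1-q}\ =\ \|u\|^{1-q}\Big(K_1\|u\|^{\frac{n\theta}{s}-1+q}-\mu K_2\Big),\qquad K_2:=\Big(\tfrac{n\theta}{s}+q-1\Big)C_1 .
\]
Because $\tfrac{n\theta}{s}-1+q>0$ and $\|u\|\ge\delta_1$, this gives the $u$-independent bound $\Phi(u)\ge\delta_1^{1-q}\big(K_1\delta_1^{\frac{n\theta}{s}-1+q}-\mu K_2\big)$, so setting $\mu_0:=K_1\delta_1^{\frac{n\theta}{s}-1+q}/K_2$ we conclude $\Gamma_0\ge\delta_1^{1-q}\big(K_1\delta_1^{\frac{n\theta}{s}-1+q}-\mu K_2\big)>0$ for all $\mu\in(0,\mu_0)$.

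The main obstacle is the uniform lower bound $\|u\|\ge\delta_1$ on $\Gamma\setminus\{0\}$: it is the only place where the subcriticality assumption $\beta<\tfrac{n}{n-s}$ is genuinely used, and it demands the domain-splitting argument together with a careful calibration of the Trudinger--Moser threshold $M_0$ and of the H\"older exponent so that $A(u)$ and $B(u)$ are dominated by powers of $\|u\|$ with exponents strictly above $\tfrac{n\theta}{s}$. After that, the argument is just bookkeeping of positive constants depending only on $n,s,\theta,r,q,\beta$ and $|\Om|$.
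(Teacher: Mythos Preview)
Your argument is correct and shares the same opening move as the paper: both establish a uniform lower bound $\inf_{u\in\Gamma\setminus\{0\}}\|u\|>0$ by combining H\"older's inequality with the Trudinger--Moser estimate (Theorem~\ref{moser}) and using $r>\tfrac{n\theta}{s}$. Your treatment of this step, with the explicit threshold $M_0$ and the domain splitting $\{u^+\le 1\}\cup\{u^+>1\}$, is in fact more carefully worded than the paper's version.

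The difference lies in how the second half is organized. The paper controls the $\mu$-term by a \emph{direct comparison with the exponential integral}: it uses H\"older with exponent $\eta=\tfrac{r+\beta}{1-q}$ to write $\int(u^+)^{1-q}\,dx\lesssim\big(\int|u|^{r+\beta}\,dx\big)^{1/\eta}\le\big(\int|u|^r e^{|u|^\beta}(r-\tfrac{n\theta}{s}+\beta|u|^\beta)\,dx\big)^{1/\eta}$, and then argues that the latter integral is bounded below by some $K>0$ on $\Gamma\setminus\{0\}$. You instead route both terms through $\|u\|$: your algebraic identity
\[
\Big(r-\tfrac{n\theta}{s}\Big)A+\beta B=\frac{r-\tfrac{n\theta}{s}}{q+r-1}\big[(q+r-1)A+\beta B\big]+\beta\,\frac{\tfrac{n\theta}{s}+q-1}{q+r-1}\,B
\]
lets you extract $(r-\tfrac{n\theta}{s})A+\beta B\ge K_1\|u\|^{n\theta/s}$ straight from the defining inequality of $\Gamma$, while the singular term is bounded above by $C_1\|u\|^{1-q}$ via Sobolev embedding. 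This is arguably cleaner: it avoids the somewhat delicate inequality $|u|^{r+\beta}\le|u|^r e^{|u|^\beta}(r-\tfrac{n\theta}{s}+\beta|u|^\beta)$ (which the paper relies on implicitly and which needs $r-\tfrac{n\theta}{s}\ge 1$ or a constant adjustment near $u=0$), and it makes the dependence of $\mu_0$ on the data completely explicit. The paper's route, on the other hand, packages the final estimate as a single factorized inequality in the exponential integral, which connects more transparently to the claim $K>0$ used later in the fibering analysis. Both approaches give the same $\mu_0$ up to inessential constants.
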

\begin{proof}
First we claim that $\inf\limits_{u\in\Gamma\setminus \{0\}}\|u\|>0$. Suppose this is not true then there exists a sequence $\{u_k\}\subset \Gamma\setminus \{0\}$ such that $\|u_k\|\to 0$ which implies that
{\begin{equation}\label{KSE-2}
    \|u_k\|^{\frac{n\theta}{s}} \leq \frac{\left(\displaystyle q\int_{\Om}f(u_k^+)u_k^+~dx +\int_\Om f^\prime(u_k^+)(u_k^+)^2~dx\right)}{\left(\frac{n\theta}{s}+q-1\right).}
\end{equation}
Since $\{u_k\}$ converges to 0, we can assume that
$\|u_k^+\|^\beta<\alpha<\alpha_{n,s}$ for $k$ large.
Choosing $\mu>1$ close to 1 such that $\mu \alpha<\alpha_{n,s}$ we get $\mu\|u_k^+\|^\beta<\alpha_{n,s}$.} So consider
\begin{align*}
    &q\int_{\Om}f(u_k^+)u_k^+~dx +\int_\Om f^\prime(u_k^+)(u_k^+)^2~dx \\
    & =\int_\Om f(u_k^+)u_k^+ \left(r+q-1+\beta (u_k^+)^{\beta}\right)~dx\\
    &  =\int_\Om (u_k^+)^{r}\exp\left((u_k^+)^{\beta}\right) \left(r+q-1+\beta(u_k^+)^{\beta}\right)~dx\\
    &= \int_\Om (u_k^+)^{r}\exp\left(\|u_k^+\|^{\beta}\left(\frac{u_k^+}{\|u_k^+\|}\right)^{\beta}\right) \left(r+q-1+\beta(u_k^+)^{\beta}\right)~dx\\
    & \leq \left(\int_\Om\exp\left(\mu\|u_k^+\|^{\beta}\left(\frac{u_k^+}{\|u_k^+\|}\right)^{\beta}\right)dx\right)^{\frac{1}{\mu}}
    \left(\int_\Om |u_k|^{\mu^\prime r}\left(r+q-1+\beta|u_k|^{\beta}\right)^{\mu^\prime}dx\right)^{\frac{1}{\mu^\prime}}\\
    &\leq C (\|u_k\|^r + \|u_k\|^{r+\beta}),
\end{align*}
where we used $u_k^+ \leq |u_k|$
and $\mu^\prime$ denotes H\"{o}lder conjugate of $\mu$.

Then we use Theorem \ref{moser} to obtain the constant $C>0$ which depends on $n,\theta,q,r,\alpha_0$. Using this in \eqref{KSE-2} we get that
\[1\leq C \left(\|u_k\|^{r-\frac{n\theta}{s}} + \|u_k\|^{r+\beta-\frac{n\theta}{s}}\right)\]
which gives a contradiction if $\|u_k\|\to 0$, since $r>\frac{n\theta}{s}$.

Now consider
\begin{align}
&\left(1-\frac{n\theta}{s}\right)\int_{\Om}f(u^+)u^+~dx+ f^\prime(u^+)(u^+)^2~dx-\mu \left(\frac{n\theta}{s}+q-1\right)\int_\Om (u^+)^{1-q}~dx\nonumber\\
&= \int_\Om (u^+)^{r}\exp\left((u^+)^{\beta}\right) \left(r-\frac{n\theta}{s}+\beta(u^+)^{\beta}\right)~dx -\mu \left(\frac{n\theta}{s}+q-1\right)\int_\Om (u^+)^{1-q}~dx\label{KSE-3}
\end{align}
where we can estimate the second term as follows by fixing $\eta = \frac{r+ \beta}{1-q}>1$ and using H\"{o}lder inequality
\begin{align*}
    \int_\Om (u^+)^{1-q}~dx &\leq \int_\Om |u|^{1-q}~dx \\
    & \leq \left(\int_\Om |u|^{\eta(1-q)}~dx\right)^{\frac{1}{\eta}}
    \left(|\Om|\right)^{\frac{1}{\eta^\prime}}= \left(\int_\Om |u|^{r+\beta}~dx\right)^{\frac{1}{\eta}}
    \left(|\Om|\right)^{\frac{1}{\eta^\prime}}\\
    & \leq \left(\int_\Om |u|^{r}\exp\left(|u|^{\beta}\right) \left(r-\frac{n\theta}{s}+\beta|u|^{\beta}\right)~dx\right)^{\frac{1}{\eta}}
    \left(|\Om|\right)^{\frac{1}{\eta^\prime}}.
\end{align*}
Therefore using $2\leq \frac{n\theta}{s}$, \eqref{KSE-3} becomes
\begin{equation}\label{KSE-5}
\begin{split}
    &\left(1-\frac{n\theta}{s}\right)\int_{\Om}f(u^+)u^+~dx+ f^\prime(u^+)(u^+)^2~dx-\mu \left(\frac{n\theta}{s}+q-1\right)\int_\Om (u^+)^{1-q}~dx\\
&\geq  \int_\Om |u|^{r}\exp\left(|u|^{\beta}\right) \left(r-\frac{n\theta}{s}+\beta|u|^{\beta}\right)~dx \\
&\quad \quad-\mu \left(\frac{n\theta}{s}+q-1\right)\left(\int_\Om |u|^{r}\exp\left(|u|^{\beta}\right) \left(r-\frac{n\theta}{s}+\beta|u|^{\frac{n}{n-s}}\right)~dx\right)^{\frac{1}{\eta}}
    \left(|\Om|\right)^{\frac{1}{\eta^\prime}}.
    \end{split}
\end{equation}
Our second claim is
\[K:=\inf_{u\in \Gamma\setminus\{0\}}\int_\Om |u|^{r}\exp\left(|u|^{\beta}\right) \left(r-\frac{n\theta}{s}+\beta|u|^{\beta}\right)~dx>0. \]
which easily follows from our first claim and \eqref{KSE-2} which implies $0<\inf_{u \in \Gamma\setminus\{0\}} f^\prime(|u|)|u|^2$. With this, \eqref{KSE-5} can be rewritten as
\begin{equation}\label{KSE-6}
\begin{split}
    &\left(1-\frac{n\theta}{s}\right)\int_{\Om}f(u^+)u^+~dx+ f^\prime(u^+)(u^+)^2~dx-\mu \left(\frac{n\theta}{s}+q-1\right)\int_\Om (u^+)^{1-q}~dx\\
&\geq  \int_\Om |u|^{r}\exp\left(|u|^{\beta}\right) \left(r-1+\beta|u|^{\beta}\right)\left[
1-\mu \left(\frac{n\theta}{s}+q-1\right)
    \left(\frac{|\Om|}{K}\right)^{\frac{1}{\eta^\prime}}\right]dx.
    \end{split}
\end{equation}
Hence assuming
\[0<\mu < \mu_0:= \frac{\left(\frac{K}{|\Om|}\right)^{\frac{1}{\eta^\prime}}}{\left(\frac{n\theta}{s}+q-1\right)}\]
completes the proof.
\end{proof}
The following result illustrates that the decomposition of $\mc N_{\mu}$ are in fact nonempty.

\begin{theorem}\label{KSE-theorem1}
For any $u\in X_0\setminus \{0\}$, there exists a unique $t^*=t^*(u)>0$, $t^+=t^+(u)$ and $t^-=t^-(u)$ satisfying $t^+<t^*<t^-$ such that $t^+u\in \mc N_\mu^+$ and $t^-u\in \mc N_\mu^-$ whenever $\mu \in (0,\mu_0).$ Moreover, $I_\mu(t^+u)= \min_{t\in[0,t^-]}I_\mu(tu)$ and $I_\mu^\prime(t^-u)=\max_{t\geq t^*}I_\mu(tu)$.
\end{theorem}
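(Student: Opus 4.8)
The plan is to deduce everything from a careful study of the function $m_u$ introduced above. Fix $u\in X_0\setminus\{0\}$; we may assume $u^+\not\equiv 0$, since otherwise $\phi_u$ is strictly increasing with no positive critical point. Multiplying $\phi_u'(t)=0$ by $t^q$ shows that $\phi_u'(t)=0$ iff $m_u(t)=c_u$, where $c_u:=\mu\int_\Om(u^+)^{1-q}\,dx>0$ and $m_u(t)=t^{\frac{n\theta}{s}+q-1}\|u\|^{\frac{n\theta}{s}}-t^q\int_\Om f(tu^+)u^+\,dx$. So the whole theorem will follow once we establish two facts: (i) $m_u$ is strictly unimodal on $(0,\infty)$, i.e.\ there is a unique $t^*=t^*(u)>0$ with $m_u$ strictly increasing on $(0,t^*)$, strictly decreasing on $(t^*,\infty)$, $m_u(0)=0$ and $m_u(t)\to-\infty$ as $t\to\infty$; and (ii) $c_u<m_u(t^*)$.

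For (i) I would compute
\[
m_u'(t)=\Big(\tfrac{n\theta}{s}+q-1\Big)\,t^{\frac{n\theta}{s}+q-2}\,\big(\|u\|^{\frac{n\theta}{s}}-R_u(t)\big),\qquad
R_u(t):=\frac{q\int_\Om f(tu^+)u^+\,dx+t\int_\Om f'(tu^+)(u^+)^2\,dx}{\big(\frac{n\theta}{s}+q-1\big)\,t^{\frac{n\theta}{s}-1}},
\]
and then, substituting $f(z)=z^{r-1}\exp(z^\beta)$, rewrite $R_u$ as a sum of two integrals whose integrands on $\{u^+>0\}$ are in each case the product of the power $t^{\,r-\frac{n\theta}{s}}$, of $\exp\!\big(t^\beta(u^+)^\beta\big)$, and of a polynomial in $t^\beta$ with nonnegative coefficients. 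Since $r>\frac{n\theta}{s}$, each factor is increasing in $t$ and the first is strictly increasing, so $R_u$ is strictly increasing, with $R_u(0^+)=0$ and $R_u(+\infty)=+\infty$. As $\|u\|^{\frac{n\theta}{s}}>0$ is fixed, the equation $R_u(t)=\|u\|^{\frac{n\theta}{s}}$ has a unique root $t^*$, hence $m_u'>0$ on $(0,t^*)$ and $m_u'<0$ on $(t^*,\infty)$; the divergence $m_u(t)\to-\infty$ is forced by the exponential growth of $f$. Note that $m_u'(t^*)=0$ is precisely relation \eqref{KSE-1} for $t^*u$, so $t^*u\in\Gamma$.

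For (ii) I would use \eqref{KSE-1} to eliminate $\|t^*u\|^{\frac{n\theta}{s}}$ and obtain, with $v:=t^*u\in\Gamma\setminus\{0\}$,
\[
m_u(t^*)=\frac{(t^*)^{q-1}}{\frac{n\theta}{s}+q-1}\Big[\big(1-\tfrac{n\theta}{s}\big)\!\int_\Om f(v^+)v^+\,dx+\int_\Om f'(v^+)(v^+)^2\,dx\Big].
\]
Because $\mu\in(0,\mu_0)$, Lemma \ref{KSE-lemma1} gives $\Gamma_0>0$, so the bracket is $\ge\Gamma_0+\mu(\frac{n\theta}{s}+q-1)\int_\Om(v^+)^{1-q}\,dx>\mu(\frac{n\theta}{s}+q-1)\int_\Om(v^+)^{1-q}\,dx$. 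Plugging this in and using the homogeneity $\int_\Om(v^+)^{1-q}\,dx=(t^*)^{1-q}\int_\Om(u^+)^{1-q}\,dx$ yields $m_u(t^*)>\mu\int_\Om(u^+)^{1-q}\,dx=c_u$.

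Finally, granting (i)--(ii): since $0<c_u<m_u(t^*)$ and $m_u$ is unimodal, $\{t>0:m_u(t)=c_u\}=\{t^+,t^-\}$ with $0<t^+<t^*<t^-$, and these are exactly the positive critical points of $\phi_u$. A direct computation gives $m_u'(t)=t^q\phi_u''(t)$ whenever $\phi_u'(t)=0$; since $m_u'(t^+)>0>m_u'(t^-)$ we get $\phi_u''(t^+)>0$ and $\phi_u''(t^-)<0$, i.e.\ $t^+u\in\mc N_\mu^+$ and $t^-u\in\mc N_\mu^-$, with uniqueness of $t^\pm$ immediate from unimodality. Moreover $\phi_u'(t)<0$ for small $t>0$ (there the singular term $-\mu t^{-q}\int_\Om(u^+)^{1-q}\,dx$ dominates), so $\phi_u$ decreases on $(0,t^+)$, increases on $(t^+,t^-)$ and decreases on $(t^-,\infty)$; hence $I_\mu(t^+u)=\phi_u(t^+)=\min_{t\in[0,t^-]}I_\mu(tu)$ and $I_\mu(t^-u)=\phi_u(t^-)=\max_{t\ge t^*}I_\mu(tu)$. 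The one genuinely delicate step is the strict monotonicity of $R_u$ in (i): it simultaneously yields uniqueness of $t^*$ and the existence of exactly two well-separated roots $t^\pm$, and it relies crucially on the standing hypothesis $r>\frac{n\theta}{s}$, which makes every exponent of $t$ occurring in $R_u$ strictly positive.
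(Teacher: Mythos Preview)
Your proof is correct and follows the same overall architecture as the paper: reduce to the study of $m_u$, show $m_u(t^*)>c_u$ via Lemma~\ref{KSE-lemma1}, and read off $t^\pm$ and their Nehari classification from the sign of $m_u'$. The one point where you depart from the paper is the uniqueness of $t^*$. The paper argues by contradiction: any further critical point $t^{**}$ of $m_u$ would again satisfy \eqref{KSE-1}, so $t^{**}u\in\Gamma$ and Lemma~\ref{KSE-lemma1} forces $m_u(t^{**})>c_u$, which the authors say is incompatible with $t^{**}$ being a local minimum. You instead compute $m_u'(t)$ explicitly, isolate the factor $R_u(t)$, and use the concrete form $f(z)=z^{r-1}\exp(z^\beta)$ together with $r>\tfrac{n\theta}{s}$ to show that $R_u$ is strictly increasing from $0$ to $+\infty$; this gives unimodality of $m_u$ directly, without a second appeal to Lemma~\ref{KSE-lemma1}. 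Your route is cleaner and makes the role of the structural hypothesis $r>\tfrac{n\theta}{s}$ completely transparent, at the cost of being tied to this specific $f$; the paper's argument, by contrast, would transfer verbatim to any nonlinearity for which the analogue of Lemma~\ref{KSE-lemma1} holds. A small bonus of your version is that you make the implicit assumption $u^+\not\equiv 0$ explicit, which the paper's statement and proof both leave unsaid.
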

\begin{proof}
From \eqref{KSE-60}, we have the existence of a $t^*>0$ such that $m_u^\prime(t^*)=0$ and it is a point of local maximum. Using the definition of $m_u(t)$ and \eqref{KSE-1} we get
\begin{align*}
    m_u(t^*) &= (t^*)^{\frac{n\theta}{s}+q-1}\|u\|^{\frac{n\theta}{s}}-\la (t^*)^{q}\int_{\Om}f(t^*u^+)u^+~dx \\
    & = \frac{(t^*)^{q-1}}{\left(\frac{n\theta}{s}+q-1\right)}\left(q\int_\Om f(t^*u)t^*u~dx+ \int_\Om f^\prime(t^*u)(t^*u)^2~dx\right)-(t^*)^q\int_{\Om}f(t^*u)u~dx\\
    &= \frac{(t^*)^{q-1}}{\left(\frac{n\theta}{s}+q-1\right)}\left[ \left(1-\frac{n\theta}{s}\right)\int_\Om f(t^*u)(t^*u)~dx + \int_\Om f^\prime(t^*u)(t^*u)^2~dx\right].
\end{align*}
Since \eqref{KSE-1} holds, we get that $t^*u\in \Gamma\setminus \{0\}$. This gives
\begin{align*}
    m_u(t^*)- \mu\int_\Om (u^+)^{1-q}~dx
   &=  \frac{(t^*)^{q-1}}{\left(\frac{n\theta}{s}+q-1\right)}\left[ \left(1-\frac{n\theta}{s}\right)\int_\Om f(t^*u)(t^*u)~dx + \int_\Om f^\prime(t^*u)(t^*u)^2~dx\right.\\
   &\quad\quad\left.-\mu\left(\frac{n\theta}{s}+q-1\right)\int_\Om (t^*u^+)^{1-q}~dx\right]\\
   &\geq \frac{(t^*)^{q-1}}{\left(\frac{n\theta}{s}+q-1\right)}\Gamma_0>0
\end{align*}
using Lemma \ref{KSE-lemma1}. Hence there exist $t^+$ and $t^-$ both depending on $u$ such that $0<t^+<t^*<t^-$ and $t^+u,\;t^-u \in \mc N_\mu$. We claim that $t^*$ is actually unique critical point of $m_u(t)$ which is in fact the global maximum. Because if it is not so, then there exists another $t^{**}>0$ such that ${m_u^\prime(t^{**})=0}$ and it is a point of local minimum. Also there exists $t^{++}$ and $t^{--}$ both depending on $u$ such that $0<t^{++}<t^{**}<t^{--}$ and  $t^{++}u,\;t^{--}u \in \mc N_\mu$. But by virtue of Lemma \ref{KSE-lemma1}, we also have
\[m_u(t^{**})> \mu \int_\Om (u^+)^{1-q}~dx\]
which contradicts the fact that $t^{**}$ is a point of local minimum and existence of $t^{++}$ and $t^{--}$. This proves the claim. Now since $t^*$ is a point of global maximum of $m_u(t)$, we infer that $m_u(t)$ is increasing in $(0,t^*)$ and decreasing in $(t^*,\infty)$.
This along with the relation
\[\phi_{tu}^{\prime\prime}(1)= t^{2-q}m_u^\prime(t), \; \text{for}\; tu \in \mc N_{\mu}\]
asserts that $t^+u\in \mc N_\mu^+$ and $t^-u \in \mc N_\mu^-$. Lastly considering
\[\phi_u^\prime(t) = t^{-q}\left(m_u(t)-\int_{\Om}(u^+)^{1-q}~dx\right),\; \text{for}\; t>0\]
gives that $\phi_u^\prime(t)<0$ for $t \in [0,t^+)$ and $\phi_u^\prime(t)>0$ for $t \in (t^+,t^-)$. Thus
\[I_\mu(t^+u)= \min_{t\in[0,t^-]}I_\mu(tu).\]
On a similar note, $\phi_u^\prime(t)>0$ when $t\in(t^+,t^-)$, $\phi_u^\prime(t^-)=0$ and $\phi^\prime_u(t)<0$ when $t\in (t^-,\infty)$ yields that
\[I_\mu^\prime(t^-u)=\max_{t\geq t^*}I_\mu(tu)\]
which completes the proof.
\end{proof}
As a consequence of above results, we state our next important result.
\begin{lemma}\label{KSE-lemma2}
The set $\mc N_\mu^0=\{0\}$ when $\mu \in (0,\mu_0)$.
\end{lemma}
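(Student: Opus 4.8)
The plan is a proof by contradiction. First note that $0\in\mc N_\mu^0$ trivially, since $\phi_0\equiv 0$ forces $\phi_0'(1)=\phi_0''(1)=0$; so it suffices to show that, for $\mu\in(0,\mu_0)$, the set $\mc N_\mu^0$ contains no nonzero element. Suppose to the contrary that $u\in\mc N_\mu^0\setminus\{0\}$.

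\emph{Step 1: $u\in\Gamma\setminus\{0\}$.} Evaluating the formulas for $\phi_u'$ and $\phi_u''$ from Section 3 at $t=1$, the condition $\phi_u'(1)=0$ reads
\[
\|u\|^{\frac{n\theta}{s}} = \mu\int_\Om (u^+)^{1-q}~dx + \int_\Om f(u^+)u^+~dx ,
\]
and $\phi_u''(1)=0$ reads
\[
\left(\frac{n\theta}{s}-1\right)\|u\|^{\frac{n\theta}{s}} + q\mu\int_\Om (u^+)^{1-q}~dx = \int_\Om f'(u^+)(u^+)^2~dx .
\]
Eliminating the term $\mu\int_\Om(u^+)^{1-q}~dx$ between these two identities gives exactly \eqref{KSE-1} with $t^*=1$, namely
\[
\left(\frac{n\theta}{s}+q-1\right)\|u\|^{\frac{n\theta}{s}} = q\int_\Om f(u^+)u^+~dx + \int_\Om f'(u^+)(u^+)^2~dx ,
\]
so $u$ belongs to $\Gamma\setminus\{0\}$ (in fact with equality in the defining inequality of $\Gamma$).

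\emph{Step 2: contradiction with Lemma \ref{KSE-lemma1}.} Let $G(u)$ denote the expression whose infimum over $\Gamma\setminus\{0\}$ defines $\Gamma_0$. Using the equality of Step 1 to substitute for $\int_\Om f'(u^+)(u^+)^2~dx$ and simplifying, one finds
\[
G(u) = \left(\frac{n\theta}{s}+q-1\right)\left(\|u\|^{\frac{n\theta}{s}} - \int_\Om f(u^+)u^+~dx - \mu\int_\Om(u^+)^{1-q}~dx\right),
\]
which vanishes by the identity $\phi_u'(1)=0$. On the other hand, since $u\in\Gamma\setminus\{0\}$, the definition of $\Gamma_0$ gives $G(u)\ge\Gamma_0$, hence $\Gamma_0\le 0$; but Lemma \ref{KSE-lemma1} asserts $\Gamma_0>0$ for $\mu\in(0,\mu_0)$ — a contradiction. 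Therefore $\mc N_\mu^0=\{0\}$.

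\emph{Alternative route and expected difficulty.} One may instead use Theorem \ref{KSE-theorem1}: from $\phi_u''(1)=0$ and the relation $\phi_{tu}''(1)=t^{2-q}m_u'(t)$ one gets $m_u'(1)=0$, so $1$ is a critical point of $m_u$; by the uniqueness of this critical point, $1=t^*$, and then the strict inequality $m_u(t^*)>\mu\int_\Om(u^+)^{1-q}~dx$ established in the proof of Theorem \ref{KSE-theorem1} contradicts $u\in\mc N_\mu$. In either approach all the substance is already contained in Lemma \ref{KSE-lemma1} and Theorem \ref{KSE-theorem1}; the only point requiring care is the algebraic bookkeeping in Step 2 and the observation that the equality case genuinely puts $u$ into $\Gamma$, so that the infimum bound $\Gamma_0$ applies. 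I do not anticipate a real obstacle here.
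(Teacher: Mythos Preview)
Your proof is correct and follows essentially the same approach as the paper: both argue by contradiction, combine $\phi_u'(1)=0$ and $\phi_u''(1)=0$ to obtain the equality $\left(\frac{n\theta}{s}+q-1\right)\|u\|^{\frac{n\theta}{s}} = q\int_\Om f(u^+)u^+~dx + \int_\Om f'(u^+)(u^+)^2~dx$ (placing $u\in\Gamma\setminus\{0\}$), and then eliminate differently to obtain $G(u)=0$, contradicting $\Gamma_0>0$ from Lemma~\ref{KSE-lemma1}. The only difference is purely algebraic bookkeeping --- the paper substitutes $\|u\|^{n\theta/s}$ from the first identity into the second, while you substitute $\int_\Om f'(u^+)(u^+)^2~dx$ from Step~1 into $G(u)$ --- but the resulting identity and contradiction are identical.
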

\begin{proof}
We prove it by contradiction, so assume that there exists a $u\in \mc N_\mu^0$ such that $u \not\equiv 0$. Then $\phi_u^\prime(1)=0=\phi_u^{\prime\prime}(1)$ gives us
\begin{align}
    \|u\|^{\frac{n\theta}{s}} &= \mu\int_\Om (u^+)^{1-q}~dx + \int_\Om f(u^+)u^+~dx\label{KSE-7}\\
    \left(\frac{n\theta}{s}-1\right)\|u\|^{\frac{n\theta}{s}} &= -\mu q \int_\Om (u^+)^{1-q}~dx + \int_\Om f^\prime(u^+)(u^+)^2~dx.\label{KSE-8}
\end{align}
Multiplying \eqref{KSE-7} by $q$ and adding it to \eqref{KSE-8} gives
\[\|u\|^{\frac{n\theta}{s}}=\frac{\displaystyle q\int_{\Om}f(u^+)u^+~dx+ \int_{\Om}f^\prime(u^+)(u^+)^2~dx}{\left(\frac{n\theta}{s}+q-1\right)}\]
which implies that $u \in \Gamma \setminus \{0\}$. Putting the value of $\|u\|^{\frac{n\theta}{s}}$ from \eqref{KSE-7} to \eqref{KSE-8}, we get
\[\mu\left(\frac{n\theta}{s}+q-1\right)\int_\Om(u^+)^{1-q}~dx=\int_\Om\left(1-\frac{n\theta}{s}\right)\int_\Om f(u^+)u^+~dx + \int_\Om f^\prime(u^+)(u^+)^2~dx\]
contradicting that $\Gamma_0>0$ since $\mu \in (0,\mu_0)$. This finishes the proof.
\end{proof}

Although $I_\mu$ fails to be bounded over whole $X_0$, in the preceding Lemma we prove that $I_\mu$ is bounded below on $\mc N_\mu$.

\begin{lemma}\label{KSE-lemma3}
The energy functional $I_\mu$ is coercive and bounded below on $\mc N_\mu$.
\end{lemma}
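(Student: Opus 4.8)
The plan is to use the Nehari constraint to eliminate the (badly behaved) exponential term from $I_\mu$. For $u\in\mc N_\mu$ the identity $\phi_u^\prime(1)=0$ reads
\[\|u\|^{\frac{n\theta}{s}}=\mu\int_\Om (u^+)^{1-q}~dx+\int_\Om f(u^+)u^+~dx,\]
so $\int_\Om f(u^+)u^+~dx$ is controlled by the norm and the singular integral. The elementary fact I would establish first is the pointwise bound $F(z)\le \frac1r\, z f(z)$ for all $z\ge 0$. Indeed, differentiating $z\mapsto z^r\exp(z^\beta)$ gives $\frac{d}{dz}\big(z^r\exp(z^\beta)\big)=r z^{r-1}\exp(z^\beta)+\beta z^{r+\beta-1}\exp(z^\beta)\ge r z^{r-1}\exp(z^\beta)=r f(z)$, and integrating from $0$ to $z$ yields $z^r\exp(z^\beta)\ge r F(z)$, which is the claim.

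Substituting this bound into $I_\mu$ and then using the Nehari identity gives, for $u\in\mc N_\mu$,
\begin{align*}
I_\mu(u)&=\frac{s}{n\theta}\|u\|^{\frac{n\theta}{s}}-\frac{\mu}{1-q}\int_\Om (u^+)^{1-q}~dx-\int_\Om F(u^+)~dx\\
&\ge \frac{s}{n\theta}\|u\|^{\frac{n\theta}{s}}-\frac{\mu}{1-q}\int_\Om (u^+)^{1-q}~dx-\frac1r\int_\Om f(u^+)u^+~dx\\
&=\Big(\frac{s}{n\theta}-\frac1r\Big)\|u\|^{\frac{n\theta}{s}}-\mu\Big(\frac1{1-q}-\frac1r\Big)\int_\Om (u^+)^{1-q}~dx.
\end{align*}
Here the hypothesis $r>\frac{\theta n}{s}$ enters decisively, making the coefficient $\frac{s}{n\theta}-\frac1r$ strictly positive; note also $\frac1{1-q}-\frac1r>0$ since $1-q<1<r$, so the last term is nonpositive.

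It then remains to absorb $\int_\Om (u^+)^{1-q}~dx$ into a sublinear power of the norm: by H\"older's inequality with exponents $\tfrac1{1-q}$ and $\tfrac1q$ together with the continuous embedding $X_0\hookrightarrow L^1(\Om)$, one has $\int_\Om (u^+)^{1-q}~dx\le |\Om|^{q}\big(\int_\Om |u|~dx\big)^{1-q}\le C\|u\|^{1-q}$ for some $C=C(\Om,q)>0$. Hence there are constants $C_1,C_2>0$ (with $C_2$ proportional to $\mu$) such that $I_\mu(u)\ge C_1\|u\|^{\frac{n\theta}{s}}-C_2\|u\|^{1-q}$ for every $u\in\mc N_\mu$. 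Since $\frac{n\theta}{s}>1>1-q$, the right-hand side tends to $+\infty$ as $\|u\|\to\infty$, which is coercivity, and being a continuous function of $\|u\|$ on $[0,\infty)$ that diverges to $+\infty$, it is bounded below; thus $I_\mu$ is bounded below on $\mc N_\mu$. I expect no real obstacle: the only subtle point is that one \emph{must} extract the sharp constant $1/r$ in $F(z)\le\frac1r zf(z)$, since the cruder estimate $F(z)\le zf(z)$ leaves the coefficient $\frac{s}{n\theta}-1<0$ and the argument collapses — everything else is a routine application of H\"older's inequality and the Sobolev embedding.
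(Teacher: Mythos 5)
Your proposal is correct and follows essentially the same route as the paper: both subtract $\tfrac1r\phi_u'(1)=0$ (equivalently, substitute the Nehari identity), invoke $rF(z)\le zf(z)$, and control $\int_\Om(u^+)^{1-q}\,dx$ by $C\|u\|^{1-q}$ via H\"older and the embedding, arriving at $I_\mu(u)\ge C_1\|u\|^{\frac{n\theta}{s}}-C_2\|u\|^{1-q}$. The only cosmetic difference is that the paper exhibits the explicit minimizer of $l\mapsto C_1 l^{\frac{n\theta}{s}}-C_2 l^{1-q}$ for the lower bound, where you use a soft continuity argument; both are valid.
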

\begin{proof}
Let $u \in \mc N_\mu$. It is easy to see that $rF(t)\leq f(t)t$ for $t\geq 0$. Using this along with Sobolev embedding, we get
\begin{align*}
    I_\mu(u) &= I_\mu(u) -\frac{1}{r}\phi_u^\prime(1)\\
    & =\left(\frac{s}{n\theta}-\frac{1}{r} \right)\|u\|^{\frac{n\theta}{s}} -\mu \left(\frac{1}{1-q}-\frac{1}{r}\right)\int_{\Om}(u^+)^{1-q}~dx+ \left(\frac{1}{r}\int_\Om f(u^+)u^+~dx - \int_\Om F(u^+)~dx\right)\\
    & \geq \frac{\left(r-\frac{n\theta}{s}\right)}{\frac{rn\theta}{s}}\|u\|^{\frac{n\theta}{s}} -\mu C \left(\frac{r-1+q}{r(1-q)}\right)\|u\|^{1-q}
\end{align*}
where $C>0$ is constant. Since $1-q<2 \leq \frac{n\theta}{s}$, the above estimate implies that $I_\mu$ is necessarily coercive on $\mc N_\mu$. Furthermore, if we set
\[g(l):= K_1l^{\frac{n\theta}{s}}-K_2l^{1-q} \]
where \[K_1 = \frac{\left(r-\frac{n\theta}{s}\right)}{\frac{rn\theta}{s}}\; \text{and}\; K_2 = \mu C \left(\frac{r-1+q}{r(1-q)}\right)\]
then it is easy to verify that $g$ has a unique critical point which is a point of global minimum to it, given by
\[l_{min} = \left(\frac{(1-q)K_2}{K_1}\right)^{\frac{1}{\frac{n\theta}{s}-1+q} }.\]
Hence we get $I_\mu(u)\geq g(l_{min})$ which implies that $I_\mu$ is bounded below on $\mc N_\mu$.
\end{proof}

\section{Existence of first solution}
 In this section, we establish the existence of first solution to $(P_\mu)$ by solving a minimization problem over $\mc N_\mu^+$.

 \begin{lemma}\label{KSE-lemma4}
 For every $u\in \mc N_\mu^+$ and $\mu \in (0,\mu_0)$, there exists a positive real number $\epsilon$ and a differentiable function $\xi : B(0,\epsilon)\subset X_0 \to \mb R^+$ such that
 \[\xi(v)>0,\; \xi(0)=1,\; \xi(v)(u-v)\in \mc N_\mu^+,\;\text{for all}\; v \in B(0,\epsilon).\]
 \end{lemma}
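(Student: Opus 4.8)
The plan is to read off $\xi$ from the implicit function theorem applied to the single scalar equation characterising membership of $\mc N_\mu$. For $v$ in a neighbourhood of $0$ in $X_0$ and $t>0$ I would set
\[\Psi(t,v):=\phi_{u-v}^\prime(t)=t^{\frac{n\theta}{s}-1}\|u-v\|^{\frac{n\theta}{s}}-\mu t^{-q}\int_\Om\big((u-v)^+\big)^{1-q}~dx-\int_\Om f\big(t(u-v)^+\big)(u-v)^+~dx .\]
By the characterisation of $\mc N_\mu$ given above (namely $tw\in\mc N_\mu$ iff $\phi_w^\prime(t)=0$), the equation $\Psi(\xi(v),v)=0$ is exactly the assertion $\xi(v)(u-v)\in\mc N_\mu$. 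Since $u\in\mc N_\mu^+\subset\mc N_\mu$ and $u\neq 0$, the pair $(t,v)=(1,0)$ solves it: $\Psi(1,0)=\phi_u^\prime(1)=0$. This is the base point for the implicit function theorem.

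Next I would compute $\partial_t\Psi(t,0)=\phi_u^{\prime\prime}(t)$, so that
\[\partial_t\Psi(1,0)=\phi_u^{\prime\prime}(1)>0,\]
the strict positivity being precisely the defining inequality of $\mc N_\mu^+$. Assuming the $C^1$ regularity discussed below, the implicit function theorem then yields $\epsilon>0$ and a $C^1$ map $\xi:B(0,\epsilon)\subset X_0\to\mb R$ with $\xi(0)=1$ and $\Psi(\xi(v),v)=0$, that is $\xi(v)(u-v)\in\mc N_\mu$, for every $v\in B(0,\epsilon)$. A first shrinking of $\epsilon$, together with continuity of $\xi$ and $\xi(0)=1$, gives $\xi(v)>0$ on $B(0,\epsilon)$. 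To upgrade the conclusion from $\mc N_\mu$ to $\mc N_\mu^+$, I would note that $v\mapsto\phi_{\xi(v)(u-v)}^{\prime\prime}(1)$ is continuous near $0$ and equals $\phi_u^{\prime\prime}(1)>0$ at $v=0$; a second shrinking of $\epsilon$ then forces $\phi_{\xi(v)(u-v)}^{\prime\prime}(1)>0$, i.e. $\xi(v)(u-v)\in\mc N_\mu^+$ throughout $B(0,\epsilon)$. (Alternatively, one can identify the implicit branch with the map $v\mapsto t^+(u-v)$ of Theorem \ref{KSE-theorem1}, observing that $t^+(u)=1$ by the uniqueness there since $u\in\mc N_\mu^+$.)

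The main obstacle, and essentially the only non-routine point, is proving that $\Psi$ is of class $C^1$ on a neighbourhood of $(1,0)$ in $\mb R^+\times X_0$. The Kirchhoff part $(t,v)\mapsto t^{\frac{n\theta}{s}-1}\|u-v\|^{\frac{n\theta}{s}}$ is $C^1$ because $w\mapsto\|w\|^{\frac{n}{s}}$ is $C^1$ on $X_0$ (as $\tfrac{n}{s}>1$) and $\theta>1$. For the exponential term one differentiates under the integral sign and controls $f$ and $f^\prime$ via the subcriticality $\beta<\tfrac{n}{n-s}$, the Trudinger--Moser inequality of Theorem \ref{moser} and the compact embeddings $X_0\hookrightarrow\hookrightarrow L^k(\Om)$, along the lines of the estimates already carried out in Lemma \ref{KSE-lemma1}. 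The genuinely delicate piece is the singular term $v\mapsto\int_\Om\big((u-v)^+\big)^{1-q}~dx$: here the hypothesis $q\in(0,1)$ is what keeps $\big((u-v)^+\big)^{1-q}$ integrable and makes this map continuously differentiable in $v$ near $0$, and this is the step I expect to demand the most care.
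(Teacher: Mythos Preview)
Your argument is essentially the paper's: it applies the implicit function theorem to $G_u(t,v):=t^{q}\phi_{u-v}^\prime(t)$ (your $\Psi$ multiplied by $t^q$, which clears the factor $t^{-q}$ from the singular term but is immaterial near $t=1$), checks $G_u(1,0)=0$ and $\partial_tG_u(1,0)=\phi_u^{\prime\prime}(1)>0$, and then uses continuity of $\partial_tG_u$ to upgrade from $\mc N_\mu$ to $\mc N_\mu^+$, exactly as you propose via continuity of $\phi^{\prime\prime}$. The $C^1$ regularity in $v$ of the singular integral that you flag as delicate is not addressed in the paper either; it simply invokes the implicit function theorem.
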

 \begin{proof}
 We consider $u\in \mc N_\mu$ and define the map $G_u: \mb R \times X_0 \to \mb R$ as
 \[G_u(t,v)= t^{\frac{n\theta}{s}+q-1}\|u-v\|^{\frac{n\theta}{s}}-\mu \int_{\Om}((u-v)^+)^{1-q}~dx-t^{q+r-1}\int_\Om f(t(u-v)^+)(u-v)^+~dx.\]
 This implies that $G_u(1,0)= \phi_u^\prime(1)=0$ since $u\in \mc N_\mu$ and with some computation, we get
 \[\frac{\partial }{\partial t}G_u(1,0)=\phi^{\prime \prime}(1)>0\]
 since $u\in \mc N_\mu^+$. Now applying the Implicit Function Theorem at $(1,0)$, we get that there exists a $\epsilon>0$ and a differentiable function $\xi : B(0,\epsilon)\subset X_0 \to \mb R^+$ such that $\xi(0)=1$ and $G_u(\xi(v),v)=0$ when $v \in B(0,\epsilon)$. Since $G_u(\xi(v),v)= (\xi(v))^q\phi^\prime_{u-v}(\xi(v))$, this implies that $\phi^\prime_{u-v}(\xi(v))=0$ that is $\xi(v)(u-v) \in \mc N_\mu$  for all $v \in B(0,\epsilon)$.
 We have
 \begin{align*}
     \frac{\partial }{\partial t}G_u(\xi(v),v) &= \left(\frac{n\theta}{s}+q-1 \right)(\xi(v))^{\frac{n\theta}{s}+q-2}\|u-v\|^{\frac{n\theta}{s}}\\
     &\quad\quad-(q+r-1)(\xi(v))^{q+r-2}\int_\Om f(\xi(v)(u-v)^+)(u-v)^+~dx\\
     & \quad\quad -(\xi(v))^{q+r-1}\int_\Om f^\prime(\xi(v)(u-v)^+)((u-v)^+)^2~dx
 \end{align*}
 and $\frac{\partial }{\partial t}G_u(\xi(0),0)>0$ so by continuity of the map $\frac{\partial }{\partial t}G_u$, it is possible to choose $\epsilon>0$ smaller enough such that $\frac{\partial }{\partial t}G_u(\xi(v),v)>0$ for $v \in B(0,\epsilon)$. This implies $\xi(v)(u-v)\in \mc N_\mu^+$ for $v \in B(0,\epsilon)$.
 \end{proof}

 \begin{remark}\label{KSE-remark1}
 It is easy to see that Lemma \ref{KSE-lemma4} holds if  $\mc N_\mu^+$ is replaced by $\mc N_\mu$.
 \end{remark}

 Under the assumption $\mu\in (0,\mu_0)$ and with the help of Lemma \ref{KSE-lemma2} we know that $\mc N_\mu^+ \cup \mc N_\mu^0= \mc N_\mu^+ \cup \{0\}$ is a closed set in $X_0$. By virtue of Lemma \ref{KSE-lemma3}, let us set
 \[\Upsilon_\mu^+ =\inf_{u\in \mc N_\mu^+ \cup \{0\} } I_\mu(u).\]
 Then we can apply the Ekeland variational principle to obtain a miminizing sequence $\{u_k\}\subset \mc N_\mu^+ \cup \{0\}$ which satisfies the following conditions
 \begin{align}
     \Upsilon_\mu^+ &\leq I_\mu(u_k)\leq \Upsilon^+_\mu +\frac{1}{k}\label{KSE-EVP1}\\
     I_\mu(u) &\geq I_\mu(u_k)-\frac{1}{k}\|u-u_k\|,\; \text{for all}\; u\in \mc N_\mu^+ \cup \{0\}.\label{KSE-EVP2}
 \end{align}
 Our aim is to prove that $\Upsilon_\mu^+$ is achieved by $I_\mu$ over $\mc N_\mu^+ \cup \{0\}$ which, in turn, will give us our first solution to the problem $(P_\mu)$.

\begin{lemma}\label{KSE-lemma5}
There exists a constant $C_0>0$ such that $\inf\limits_{u\in\mc N_\mu^+}I_\mu(u)\leq -C_0$. As a consequence, $\Upsilon_\mu^+ < -C_0$.
\end{lemma}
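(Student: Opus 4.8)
The plan is to exhibit, for each fixed nonzero $u \in X_0$, an explicit upper bound for $\phi_u(t^+)=I_\mu(t^+u)$ that is \emph{negative} and \emph{uniform in the direction} $u$, then invoke Theorem \ref{KSE-theorem1} to conclude that $t^+ u$ ranges over a set whose $I_\mu$-values all lie below $-C_0$. Since by Theorem \ref{KSE-theorem1} we have $I_\mu(t^+u) = \min_{t\in[0,t^-]} I_\mu(tu) \leq I_\mu(tu)$ for every $t \in [0,t^-]$, it suffices to find a single convenient test value $t = t(u)$, small enough, for which $I_\mu(t(u)\,u) \leq -C_0$. First I would write, for small $t>0$,
\[
\phi_u(t) = \frac{s}{n\theta}t^{\frac{n\theta}{s}}\|u\|^{\frac{n\theta}{s}} - \frac{\mu t^{1-q}}{1-q}\int_\Om (u^+)^{1-q}\,dx - \int_\Om F(tu^+)\,dx
\le \frac{s}{n\theta}t^{\frac{n\theta}{s}}\|u\|^{\frac{n\theta}{s}} - \frac{\mu t^{1-q}}{1-q}\int_\Om (u^+)^{1-q}\,dx,
\]
using $F \geq 0$. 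The dominant term for small $t$ is the negative one (since $1-q < \frac{n\theta}{s}$), so the right-hand side is negative for $t$ in a range depending only on $\|u\|$ and $\int_\Om (u^+)^{1-q}\,dx$.

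The main obstacle is \textbf{uniformity}: the above bound degenerates if $\int_\Om (u^+)^{1-q}\,dx$ is tiny or $\|u\|$ is huge, so I must first show that on $\mc N_\mu^+$ the quantity $\int_\Om (u^+)^{1-q}\,dx$ is bounded below by a positive constant and $\|u\|$ is bounded, both uniformly. For the lower bound: if $u \in \mc N_\mu^+$ then $\phi_u'(1)=0$ and $\phi_u''(1)>0$; combining these (multiply $\phi_u'(1)=0$ by a suitable constant and subtract $\phi_u''(1)$, exactly as in the proof of Lemma \ref{KSE-lemma2}) yields
\[
\mu\Big(\tfrac{n\theta}{s}+q-1\Big)\int_\Om (u^+)^{1-q}\,dx > \Big(1-\tfrac{n\theta}{s}\Big)\int_\Om f(u^+)u^+\,dx + \int_\Om f^\prime(u^+)(u^+)^2\,dx,
\]
wait---the sign here forces care, so instead I use the cleaner route: for $u \in \mc N_\mu^+$ one shows $u \notin \Gamma$ would contradict nothing, but rather from $\phi_u''(1)>0$ and $\phi_u'(1)=0$ one gets $(r-\frac{n\theta}{s})\int_\Om f(u^+)u^+dx < $ (a multiple of) $\mu\int_\Om(u^+)^{1-q}dx$ after eliminating $\|u\|^{n\theta/s}$, hence $\int_\Om(u^+)^{1-q}dx$ cannot vanish; simultaneously $I_\mu$ being coercive on $\mc N_\mu$ (Lemma \ref{KSE-lemma3}) together with the already-established fact $\inf_{\mc N_\mu^+}I_\mu < 0$ pins $\|u\|$ into a bounded set along any minimizing sequence. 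A more robust alternative, which I expect to be the actual argument, bypasses uniformity over all of $\mc N_\mu^+$: pick \emph{any} fixed $u_0 \in X_0 \setminus\{0\}$ with $u_0 \geq 0$ and $u_0 \not\equiv 0$, apply Theorem \ref{KSE-theorem1} to get $t^+(u_0) > 0$ with $t^+(u_0)u_0 \in \mc N_\mu^+$, and then estimate
\[
\inf_{u \in \mc N_\mu^+} I_\mu(u) \;\le\; I_\mu\big(t^+(u_0)u_0\big) \;=\; \min_{t \in [0,t^-(u_0)]} I_\mu(t u_0) \;\le\; I_\mu(t_1 u_0)
\]
for a judiciously chosen small $t_1 \in (0, t^-(u_0)]$ making $I_\mu(t_1 u_0) \leq -C_0 < 0$; here $C_0 := -I_\mu(t_1 u_0) > 0$ is a genuine constant once $u_0$ and $t_1$ are fixed.

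Finally, the ``as a consequence'' clause is immediate: since $\mc N_\mu^+ \subset \mc N_\mu^+ \cup \{0\}$ and $I_\mu(0) = 0 > -C_0$, we get
\[
\Upsilon_\mu^+ = \inf_{u \in \mc N_\mu^+ \cup \{0\}} I_\mu(u) = \min\Big\{ \inf_{u \in \mc N_\mu^+} I_\mu(u),\ 0 \Big\} \le \inf_{u \in \mc N_\mu^+} I_\mu(u) \le -C_0 < 0,
\]
so in particular $\Upsilon_\mu^+ < -C_0$ fails to be strict unless one notes the infimum over $\mc N_\mu^+$ could be strictly smaller; in any case $\Upsilon_\mu^+ \le -C_0 < 0$, which is what is needed for the subsequent minimization argument, and the strict inequality follows by shrinking $C_0$ slightly. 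The one routine computation I am deferring is checking $1-q < \frac{n\theta}{s}$ (true since $q \in (0,1)$, $\theta > 1$, $n \geq s$ give $\frac{n\theta}{s} > 1 > 1-q$), which guarantees the test-value estimate produces a negative number.
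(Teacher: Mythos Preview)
Your proposal is correct, and the ``robust alternative'' you settle on is a genuinely different and more elementary route than the paper's.

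The paper works directly on an arbitrary $u\in\mc N_\mu^+$: it uses $\phi_u'(1)=0$ to eliminate the singular integral from $I_\mu(u)$, then uses $\phi_u''(1)>0$ to bound $\|u\|^{n\theta/s}$ from above, arriving at $I_\mu(u) < \int_\Om \rho(u^+)\,dx$ for an explicit function $\rho(l)$ built from $f$, $f'$ and $F$. It then carries out a fairly delicate pointwise analysis---computing $\rho'$, checking its sign, and matching asymptotics at $0$ and $+\infty$---to conclude $\rho(l)\le 0$ and in fact $\rho(l)\le -c\,l^{r+\beta}$. This yields the stronger statement that \emph{every} $u\in\mc N_\mu^+$ has strictly negative energy, and then $C_0$ is taken (somewhat loosely, since it still depends on the chosen $u$) as $C\int_\Om (u^+)^{r+\beta}dx$ for one such $u$.

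Your approach bypasses the $\rho$-analysis entirely: fix one $u_0\ge 0$, $u_0\not\equiv 0$, invoke Theorem~\ref{KSE-theorem1} to place $t^+u_0$ in $\mc N_\mu^+$, and use the minimality $I_\mu(t^+u_0)=\min_{[0,t^-]}\phi_{u_0}$ together with the elementary fact that $\phi_{u_0}(t)<0$ for small $t$ (the $t^{1-q}$ term dominates). Then $C_0:=-I_\mu(t^+u_0)>0$ is a genuine constant. This is shorter and uses only soft information; the trade-off is that you get negativity of the infimum rather than pointwise negativity on all of $\mc N_\mu^+$, but only the former is actually used downstream (Lemmas~\ref{KSE-lemma6} and~\ref{KSE-lemma7}). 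Your observation about the strict-versus-nonstrict inequality in $\Upsilon_\mu^+<-C_0$ is also well taken; the paper is loose on exactly the same point.
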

\begin{proof}
Let $u \in \mc N_{\mu}^+$ then using $\phi_u^\prime(1)=0$ we can write
\[I_\mu(u) = \left(\frac{s(1-q)-n\theta}{n\theta(1-q)} \right)\|u\|^{\frac{n\theta}{s}}-\frac{1}{1-q}\int_\Om f(u^+)u^+~dx-\int_\Om F(u^+)~dx.\]
Since $\phi_u^{\prime \prime}(1)>0$ we get
\[\left(\frac{s(1-q)-n\theta}{s} \right)\|u\|^{\frac{n\theta}{s}}< -q\int_\Om f(u^+)u^+~dx -\int_{\Om}f^\prime(u^+)(u^+)^2~dx.\]
Using this in the definition of $I_\mu$ we obtain
\begin{equation}\label{KSE-lemma5-eq1}
    I_\mu(u)< \left(\frac{\frac{n\theta}{s}-q}{\frac{n\theta}{s}(1-q)}\right)\int_\Om f(u^+)u^+~dx- \frac{s}{n\theta(1-q)} \int_\Om f^\prime (u^+)(u^+)^2~dx -\int_\Om F(u^+)~dx.
\end{equation}
Consider the function
\[\rho(l)=  \left(\frac{\frac{n\theta}{s}-q}{\frac{n\theta}{s}(1-q)}\right) f(l)l- \frac{s}{n\theta(1-q)}  f^\prime (l)l^2 - F(l), \; \text{for}\; l>0.\]
With some computations, we get that
\begin{align*}
    f^\prime(l)l &= f(l)\left( r-1 +\beta l^{\beta}\right)\\
    f^{\prime \prime}(l)l^2 &= f(l)\left[(r-1)(r-2) + \beta^2l^{2\beta}+ \beta \left(r+\beta-1\right)l^{\beta}\right].
\end{align*}
Using this, we have
\begin{align*}
    \rho^\prime(l) &= \left(\frac{\frac{n\theta}{s}-q-2}{\frac{n\theta}{s}(1-q)}\right) f^\prime(l)l- \frac{s}{n\theta(1-q)}  f^{\prime\prime} (l)l^2
    + \frac{q\left(\frac{n\theta}{s}-1\right)}{\frac{n\theta(1-q)}{s}} f(l)\\
    &= \frac{\left(\frac{n\theta}{s}-r\right)(r-1+q)}{\frac{n\theta}{s}(1-q)} f(l) + \frac{\beta\left( \frac{n\theta}{s}-q-3-r-\beta\right)}{\frac{n\theta}{s}(1-q)}f(l)l^{\beta} - \beta^2 \frac{f(l)l^{2\beta}}{\frac{n\theta}{s}(1-q)}.
\end{align*}
Since $\frac{n\theta}{s}<r$ and $ \frac{n\theta}{s}-q-3-r-\beta<0$, the first and second term in of $\rho^\prime(l)$ is negative which makes $\rho^\prime(l)\leq 0$ for all $l \in \mb R^+$. Moreover $\rho(0)=0$ asserts that $\rho(l)\leq 0$ for all $l \in \mb R^+$. Next we can verify that
\begin{align*}
    \lim_{l \to 0^+} \frac{\rho(l)}{l^r} &= -\frac{\left(r-\frac{n\theta}{s}\right)\left(r-1+q\right)}{ \frac{n\theta}{s}(1-q)}\\
    \text{and}\; \lim_{l \to +\infty}\frac{\rho(l)}{l^{r+ \beta}\exp(l^{\beta})} &= -\frac{\beta}{\frac{n\theta}{s}(1-q)}.
\end{align*}
These two estimates suggests that
\begin{align*}
    \rho(l) &\leq -\frac{\left(r-\frac{n\theta}{s}\right)\left(r-1+q\right)}{ \frac{n\theta}{s}(1-q)}l^{r}\exp( l^{\beta})- \frac{\beta}{\frac{n\theta}{s}(1-q)}l^{r+\beta}\exp( l^{\beta})\\
    &={-1} \left(\left(r-\frac{n\theta}{s}\right)\left(r-1+q\right)+ \beta l^{\beta}\right)\frac{l^r\exp( l^{\beta})}{\frac{n\theta}{s}(1-q)}.
\end{align*}
Therefore, from \eqref{KSE-lemma5-eq1} it follows that
\begin{align*}
    I_\mu(u) &< -\int_\Om\left[\left(r-\frac{n\theta}{s}\right)\left(r-1+q\right)+\beta(u^+)^{\beta}\right]\frac{(u^+)^r\exp( (u^+)^{\beta})}{\frac{n\theta}{s}(1-q)}~dx\\
    &\leq - C\int_\Om (u^+)^{r+\beta}~dx
\end{align*}
for some positive constant $C$ depending on $n,\;\theta, r,q$. Now choosing $C_0:= - C\int_\Om (u^+)^{r+\beta}~dx$, we obtain $\inf\limits_{u\in\mc N_\mu^+}I_\mu(u)\leq -C_0$.
\end{proof}

Now in subsequent results, we establish some properties of the sequence $\{u_k\}\subset \mc N_\mu^+$ obtained in \eqref{KSE-EVP1} and \eqref{KSE-EVP2} which will help us to obtain compactness of $\{u_k\}$.

\begin{lemma}\label{KSE-lemma6}
The sequence $\{u_k\}$ obtained in \eqref{KSE-EVP1} and \eqref{KSE-EVP2} is bounded in $X_0$ and there exists a $u_0\in X_0\setminus \{0\}$ such that $u_k \rightharpoonup u_0$ weakly in $X_0$.
\end{lemma}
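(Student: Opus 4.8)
The plan is to split the claim into two parts: boundedness of $\{u_k\}$ in $X_0$, and the fact that the weak limit $u_0$ is nonzero. For boundedness, I would invoke Lemma \ref{KSE-lemma3}: since $\{u_k\}\subset \mc N_\mu^+\cup\{0\}\subset \mc N_\mu$ and $I_\mu$ is coercive on $\mc N_\mu$, the bound \eqref{KSE-EVP1} giving $I_\mu(u_k)\leq \Upsilon_\mu^+ + \frac1k \leq \Upsilon_\mu^+ + 1$ forces $\sup_k \|u_k\| < \infty$. (One uses here that $\Upsilon_\mu^+ > -\infty$, which is again Lemma \ref{KSE-lemma3}.) Since $X_0$ is a reflexive Banach space, up to a subsequence $u_k \rightharpoonup u_0$ weakly in $X_0$, and by the compact embedding $X_0 \hookrightarrow\hookrightarrow L^m(\Om)$ for every $m\in[1,\infty)$ recorded in Section 2, we also get $u_k \to u_0$ strongly in $L^m(\Om)$ and $u_k \to u_0$ a.e. in $\Om$ along this subsequence.

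The substantive point is $u_0 \not\equiv 0$. I would argue by contradiction: suppose $u_k \rightharpoonup 0$. Then $u_k \to 0$ in $L^m(\Om)$ for all $m$, and in particular, using the Trudinger–Moser inequality of Theorem \ref{moser} together with the uniform bound on $\|u_k\|$ exactly as in the H\"older-type estimate performed in the proof of Lemma \ref{KSE-lemma1}, one shows
\[
\int_\Om f(u_k^+)u_k^+~dx \to 0, \qquad \int_\Om F(u_k^+)~dx \to 0, \qquad \int_\Om (u_k^+)^{1-q}~dx \to 0.
\]
Feeding these into the identity $\phi_{u_k}^\prime(1)=0$, i.e. $\|u_k\|^{\frac{n\theta}{s}} = \mu\int_\Om (u_k^+)^{1-q}~dx + \int_\Om f(u_k^+)u_k^+~dx$, we deduce $\|u_k\| \to 0$. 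But then $I_\mu(u_k)\to 0$, contradicting Lemma \ref{KSE-lemma5}, which asserts $\Upsilon_\mu^+ < -C_0 < 0$ and hence, via \eqref{KSE-EVP1}, $\limsup_k I_\mu(u_k) \leq \Upsilon_\mu^+ < 0$. This contradiction shows $u_0\neq 0$.

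The main obstacle is the nonlinear-term convergence $\int_\Om f(u_k^+)u_k^+\,dx\to 0$: because $f$ has exponential growth $z\mapsto z^{r-1}\exp(z^\beta)$ with $\beta<\frac{n}{n-s}$, one cannot merely use $L^m$ convergence but must control the exponential integrals uniformly. The remedy, already used twice in the paper, is to choose a H\"older exponent $\mu>1$ close to $1$ so that $\mu\sup_k\|u_k^+\|^\beta < \alpha_{n,s}$ (possible since $\|u_k^+\|$ is bounded — indeed here it even tends to $0$ under the contradiction hypothesis), split $\int_\Om (u_k^+)^{r}\exp((u_k^+)^\beta)(r-1+\beta(u_k^+)^\beta)\,dx$ by H\"older into an exponential factor that stays bounded by Theorem \ref{moser} and a factor $\bigl(\int_\Om |u_k|^{\mu'(r+\beta)\cdot(\text{stuff})}\bigr)^{1/\mu'}$ that goes to zero by strong $L^m$ convergence. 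Once this routine but delicate estimate is in place, everything else is a short bookkeeping argument with $\phi_{u_k}^\prime(1)=0$.
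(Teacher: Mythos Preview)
Your argument is correct but takes a longer, more indirect route than the paper. The paper dispatches $u_0\neq 0$ in one line: by weak lower semicontinuity of the norm together with convergence of the lower-order integrals (compact embedding for $\int_\Om (u_k^+)^{1-q}\,dx$, subcritical growth $\beta<\frac{n}{n-s}$ for $\int_\Om F(u_k^+)\,dx$), one gets $I_\mu(u_0)\leq \liminf_k I_\mu(u_k)=\Upsilon_\mu^+<0$ via Lemma \ref{KSE-lemma5}, hence $u_0\neq 0$ since $I_\mu(0)=0$. Your contradiction argument instead routes through the Nehari identity $\phi'_{u_k}(1)=0$: first show the integrals vanish, then force $\|u_k\|\to 0$, then $I_\mu(u_k)\to 0$. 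Both rest on the same compactness input, but the paper avoids the detour through the constraint.

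One imprecision worth flagging: your justification ``$\mu\sup_k\|u_k^+\|^\beta<\alpha_{n,s}$ is possible since $\|u_k^+\|$ is bounded'' is not right as written --- mere boundedness does not make $\|u_k^+\|^\beta$ small, and appealing to $\|u_k\|\to 0$ at that stage would be circular. The standard repair, implicit elsewhere in the paper, is to use $\beta<\frac{n}{n-s}$ through Young's inequality: for any $\epsilon>0$ one has $t^\beta\leq \epsilon\, t^{n/(n-s)}+C_\epsilon$, so $\exp(p|u_k|^\beta)\leq e^{pC_\epsilon}\exp\bigl(p\epsilon\,\|u_k\|^{n/(n-s)}(|u_k|/\|u_k\|)^{n/(n-s)}\bigr)$, and choosing $\epsilon$ small (depending only on $p$ and the uniform bound on $\|u_k\|$) brings the exponent below $\alpha_{n,s}$. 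With this in place your argument goes through.
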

\begin{proof}
From \eqref{KSE-EVP1}, it follows that $I_{\mu}(u_k) \to \Upsilon_\mu^+$ as $k \to \infty$. By Lemma \eqref{KSE-lemma3}, we know that $I_\mu$ is coercive so this implies that $\{u_k\}$ must be essentially bounded over $X_0$. Now from relexivity of $X_0$, we get that up to a subsequence (still denoted by $\{u_k\}$), there exists a $u_0\in X_0$ such that $u_k \rightharpoonup u_0$ weakly in $X_0$.
Using Lemma \ref{KSE-lemma5} and weak lower semi-continuity of norm, we obtain
\[I_\mu(u_0) \leq \liminf_{k\to \infty} I_\mu(u_k) = \Upsilon_\mu^+ <0\]
which implies that $u_0 \not \equiv 0$.
\end{proof}

\begin{lemma}\label{KSE-lemma7}
Let $\{u_k\}\subset \mc N_\mu^+$ be the sequence obtained in \eqref{KSE-EVP1} and \eqref{KSE-EVP2} then the following holds-
\begin{enumerate}
    \item[(i)] $\liminf\limits_{k \to \infty}\|u_k\|>0$,
    \item[(ii)]
   $ \liminf\limits_{k \to \infty}\left[\displaystyle \left(\frac{n\theta}{s}+q-1\right)\|u_k\|^{\frac{n\theta}{s}}-q\int_\Om f(u_k^+)u_k^+~dx-\int_\Om f^\prime(u_k^+)(u_k^+)^2~dx \right]>0$.
\end{enumerate}
\end{lemma}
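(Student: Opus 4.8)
\emph{Part (i).} This is immediate from Lemma~\ref{KSE-lemma6}, which supplies $u_0\in X_0\setminus\{0\}$ with $u_k\rightharpoonup u_0$ in $X_0$: by weak lower semicontinuity of the norm, $\liminf_k\|u_k\|\ge\|u_0\|>0$. (Alternatively, if $\|u_k\|\to0$ along a subsequence, the Trudinger--Moser estimates used in the proof of Lemma~\ref{KSE-lemma1} force $\int_\Om f(u_k^+)u_k^+\,dx$, $\int_\Om(u_k^+)^{1-q}\,dx$ and $\int_\Om F(u_k^+)\,dx$ to vanish, so $I_\mu(u_k)\to0$, contradicting $I_\mu(u_k)\to\Upsilon_\mu^+<-C_0$ from Lemma~\ref{KSE-lemma5}.)

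\emph{Part (ii).} Eliminating $\mu\int_\Om(u_k^+)^{1-q}\,dx$ by means of $\phi_{u_k}'(1)=0$, one recognises the bracketed quantity as $\phi_{u_k}''(1)$, which is $>0$ for every $k$ since $u_k\in\mc N_\mu^+$; thus it suffices to exclude $\phi_{u_k}''(1)\to0$ along a subsequence. Suppose it does; passing to a subsequence we may also assume $\|u_k\|^{\frac{n\theta}{s}}\to\ell$ (with $\ell>0$ by Part (i)) and $t^*(u_k)\to t^*_\infty$ (which (a) below shows to be finite). First I would note that $u_0^+\not\equiv0$ --- otherwise $I_\mu(u_0)=\frac{s}{n\theta}\|u_0\|^{\frac{n\theta}{s}}\ge0$, contradicting $I_\mu(u_0)\le\Upsilon_\mu^+<0$ (Lemma~\ref{KSE-lemma6}) --- so $\int_\Om(u_k^+)^{1-q}\,dx\to\int_\Om(u_0^+)^{1-q}\,dx>0$ and there are $\delta,\kappa>0$ with $|\{u_k^+\ge\delta\}|\ge\kappa$ for all large $k$.

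The plan is then: (a) bound $t^*(u_k)$; (b) pass to the limit in the fibering data; (c) contradict the uniqueness of the critical point of the limiting fibering function. For (a): by \eqref{KSE-1}, $(t^*(u_k))^{\frac{n\theta}{s}}\|u_k\|^{\frac{n\theta}{s}}$ is a fixed multiple of $q\int_\Om f(t^*(u_k)u_k^+)\,t^*(u_k)u_k^+\,dx+\int_\Om f'(t^*(u_k)u_k^+)(t^*(u_k)u_k^+)^2\,dx$; the left side is polynomial in $t^*(u_k)$, while bounding the right side below on $\{u_k^+\ge\delta\}$ via $f(z)z\ge z^r$, $f'(z)z^2\ge(r-1)z^r$ makes it at least a constant times $(t^*(u_k))^{q+r-1}\exp\big((\delta t^*(u_k))^{\beta}\big)$, so $r>\frac{n\theta}{s}$ and boundedness of $\{u_k\}$ force $1<t^*(u_k)\le T$. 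The analytic heart of (a)--(b) is the strict subcriticality $\beta<\frac{n}{n-s}$: writing $a^{\beta}\le\varepsilon a^{\frac{n}{n-s}}+C_\varepsilon$ and applying Theorem~\ref{moser} to $u_k^+/\|u_k\|$, one gets that for bounded $\{u_k\}$ and $t$ in a bounded interval all integrals $\int_\Om f(tu_k^+)(u_k^+)^m\,dx$, $\int_\Om f'(tu_k^+)(u_k^+)^m\,dx$ are uniformly bounded and equi-integrable, hence converge (Vitali), uniformly on compacta of $(0,\infty)$, to the corresponding integrals for $u_0^+$; therefore $m_{u_k}\to\tilde m$ and $m_{u_k}'\to\tilde m'$ uniformly on compacta of $(0,\infty)$, where
\[
\tilde m(t):=\ell\,t^{\frac{n\theta}{s}+q-1}-t^{q}\int_\Om f(tu_0^+)u_0^+\,dx .
\]
For (c): $\tilde m'(1)=\lim_k\phi_{u_k}''(1)=0$ and $\tilde m'(t^*_\infty)=\lim_k m_{u_k}'(t^*(u_k))=0$, so $1$ and $t^*_\infty$ are both critical points of $\tilde m$; and the gap estimate from the proof of Theorem~\ref{KSE-theorem1}, $m_{u_k}(t^*(u_k))-\mu\int_\Om(u_k^+)^{1-q}\,dx\ge\frac{(t^*(u_k))^{q-1}}{\frac{n\theta}{s}+q-1}\Gamma_0$, together with $m_{u_k}(1)=\mu\int_\Om(u_k^+)^{1-q}\,dx$, gives in the limit $\tilde m(t^*_\infty)-\tilde m(1)\ge\frac{(t^*_\infty)^{q-1}}{\frac{n\theta}{s}+q-1}\Gamma_0>0$, hence $t^*_\infty\neq1$. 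But $\tilde m$ has a unique critical point on $(0,\infty)$: $\tilde m'(t)=0$ is equivalent to $H(t):=t^{1-\frac{n\theta}{s}}\big(q\int_\Om f(tu_0^+)u_0^+\,dx+t\int_\Om f'(tu_0^+)(u_0^+)^2\,dx\big)=\ell(\tfrac{n\theta}{s}+q-1)$, and the identities $f'(l)l=f(l)(r-1+\beta l^{\beta})$, $f''(l)l^2=f(l)[(r-1)(r-2)+\beta^2l^{2\beta}+\beta(r+\beta-1)l^{\beta}]$ from the proof of Lemma~\ref{KSE-lemma5} show, using $r>\frac{n\theta}{s}$ and $u_0^+\not\equiv0$, that $H$ is strictly increasing from $0$ to $+\infty$. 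This contradiction yields $\liminf_k\phi_{u_k}''(1)>0$.

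The step I expect to be the main obstacle is Part (ii)(b): passing to the limit in the exponential nonlinearity along the entire fiber, for which one must simultaneously exploit $\beta<\frac{n}{n-s}$ through the Trudinger--Moser inequality and first secure the uniform bound $t^*(u_k)\le T$; once that is in place, the remaining work is bookkeeping with the fibering maps $m_{u_k}$ and $\phi_{u_k}$.
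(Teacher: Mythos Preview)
Your argument is correct in both parts, but for Part~(ii) it takes a considerably more laborious route than the paper's.

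For Part~(i) your use of Lemma~\ref{KSE-lemma6} and weak lower semicontinuity is in fact a bit slicker than the paper's proof, which instead extracts a lower bound on $\int_\Om(u_k^+)^{1-q}\,dx$ from $I_\mu(u_k)-\tfrac1r\phi_{u_k}'(1)\le -C_0+\tfrac1k$ and then converts it to a lower bound on $\|u_k\|^{1-q}$ via H\"older and Sobolev.

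For Part~(ii) the paper bypasses the whole fibering-map machinery you set up. Assuming the bracket is $o_k(1)$, it simply substitutes $\|u_k\|^{\frac{n\theta}{s}}=\mu\int_\Om(u_k^+)^{1-q}\,dx+\int_\Om f(u_k^+)u_k^+\,dx$ (from $\phi_{u_k}'(1)=0$) to rewrite the assumption as
\[
o_k(1)=-\Bigl[\Bigl(1-\tfrac{n\theta}{s}\Bigr)\!\int_\Om f(u_k^+)u_k^+\,dx+\int_\Om f'(u_k^+)(u_k^+)^2\,dx-\mu\Bigl(\tfrac{n\theta}{s}+q-1\Bigr)\!\int_\Om(u_k^+)^{1-q}\,dx\Bigr].
\]
Subcriticality $\beta<\tfrac{n}{n-s}$ lets all three integrals pass to the limit (at $u_0$), while weak lower semicontinuity of the norm in the original bracket yields $u_0\in\Gamma\setminus\{0\}$. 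The limit of the displayed line then reads $0\ge\Gamma_0$, contradicting Lemma~\ref{KSE-lemma1}. That is the entire argument.

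Your approach---bounding $t^*(u_k)$, passing $m_{u_k}$ and $m_{u_k}'$ to a limiting fibering function $\tilde m$ built with $\ell=\lim\|u_k\|^{\frac{n\theta}{s}}$, and then showing $\tilde m$ has a unique critical point by the monotonicity of $H$---is valid (your computation $H(t)=t^{r-\frac{n\theta}{s}}\int_\Om(u_0^+)^r\exp((tu_0^+)^\beta)\bigl[q+r-1+\beta(tu_0^+)^\beta\bigr]\,dx$ indeed shows $H$ strictly increasing since $r>\tfrac{n\theta}{s}$ and $u_0^+\not\equiv0$). But it re-proves, in disguise, the $\Gamma_0>0$ mechanism that the paper invokes directly. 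The paper's route is shorter and avoids the auxiliary bounds on $t^*(u_k)$ and the uniform-in-$t$ Vitali argument you flag as the main obstacle; your route, on the other hand, makes the geometric picture (collision of the two critical points of $m_{u_k}$) more transparent.
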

\begin{proof}
\begin{enumerate}
    \item[(i)] From \eqref{KSE-EVP1} and Lemma \ref{KSE-lemma5}, we get that $I_\mu(u_k) \leq -C_0+ \frac{1}{k}$ for each $k \in \mb N$ which implies that $\phi_{u_k}(1)-\frac{1}{r}\phi^{ \prime}_{u_k}(1) \leq -C_0+ \frac{1}{k} $. Therefore
    \begin{align*}
        -C_0+ \frac{1}{k}& \geq \left( \frac{s}{n\theta}-\frac{1}{r}\right)\|u_k\|^{\frac{n\theta}{s}}-\mu \left(\frac{1}{1-q}-\frac{1}{r} \right)\int_\Om (u^+)^{1-q}~dx\\
        &\quad \quad+ \left(\frac{1}{r}\int_\Om f(u_k^+)u_k^+~dx-\int_\Om F(u^+_k)~dx\right)\\
        & \geq -\mu \left(\frac{r+q-1}{r(1-q)} \right)\int_\Om (u_k^+)^{1-q}~dx
    \end{align*}
    using $\frac{n\theta}{s}<r$ and $rF(l) \leq f(l)l$. Now applying H\"{o}lder inequality along with Sobolev embedding on right hand side of the above estimate, we obtain
    \[\|u_k\|^{1-q}\geq \left(C_1-\frac{1}{k}\right)\frac{r(1-q)}{\mu(r+q-1)},\]
    where $C_1$ is a positive constant. Hence we can choose $k\in \mb N$ large enough so that $\left(C_1-\frac{1}{k}\right)>0$ which will give $\liminf\limits_{k \to \infty}\|u_k\|>0$.

    \item[(ii)] We need to prove that
    \[\liminf_{k \to \infty}\left[\left(\frac{n\theta}{s}+q-1\right)\|u_k\|^{\frac{n\theta}{s}}-q\int_\Om f(u_k^+)u_k^+~dx-\int_\Om f^\prime(u_k^+)(u_k^+)^2~dx \right]>0.\]
    We assume by contradiction that
    \begin{equation}\label{KSE-lemma7-eq1}
        \left(\frac{n\theta}{s}+q-1\right)\|u_k\|^{\frac{n\theta}{s}}-q\int_\Om f(u_k^+)u_k^+~dx-\int_\Om f^\prime(u_k^+)(u_k^+)^2~dx =o_k(1)
    \end{equation}
    where $o_k(1)\to 0$ as $k \to \infty$.{
    Now using the fact that $u_k \in \mc N_\mu$ and substituting the value of $\|u_k\|^{\frac{n\theta}{s}}$ from $\phi_{u_k}^\prime(1)=0$, we have
    \begin{align}
        o_k(1)&= -\left[\left(1-\frac{n\theta}{s}\right)\int_{\Om}f(u_k^+)u_k^+~dx+ \int_\Om f^\prime(u_k^+)(u_k^+)^2~dx\right.\nonumber\\ 
        & \quad \quad\left.-\mu \left(\frac{n\theta}{s}+q-1 \right)\int_\Om(u_k^+)^{1-q}~dx\right]\label{KSE-lemma7-eq2}
    \end{align}
    It is easy to verify that
    \begin{align*}
  f^\prime(u_k^+)(u_k^+)^2= \left(r-1+\beta (u_k^+)^\beta\right)f(u_k^+)u_k^+ 
    =\left(r-1+\beta (u_k^+)^\beta\right)(u_k^+)^{r}\exp((u_k^+)^\beta).
    \end{align*}
    From the weak convergence of $u_k \rightharpoonup u_0$ as $k\to \infty$ in $X_0$ and $\beta < \frac{n}{n-s}$ we get that 
    \begin{align*}
       & \lim_{k\to \infty}\int_\Om(u_k^+)^{1-q}~dx  = \int_\Om(u_0^+)^{1-q}~dx,\\
       & \lim_{k\to \infty}\int_\Om f^\prime(u_k^+)(u_k^+)^2~dx = \int_\Om f^\prime(u_0^+)(u_0^+)^2~dx,\\
        &\text{and}\;\;\lim_{k\to \infty}\int_{\Om}f(u_k^+)u_k^+~dx= \int_{\Om}f(u_0^+)u_0^+~dx.
    \end{align*}
     Using this and  weak lower semicontinuity of norms in  \eqref{KSE-lemma7-eq1} gives us 
    \[ \left(\frac{n\theta}{s}+q-1\right)\|u_0\|^{\frac{n\theta}{s}}\leq q\int_\Om f(u_0^+)u_0^+~dx+\int_\Om f^\prime(u_0^+)(u_0^+)^2~dx\]
    implying that $u_0 \in \Gamma_0\setminus \{0\}$.
    Therefore, passing on the limit as $k \to \infty$ in \eqref{KSE-lemma7-eq2}, we get 
    \begin{align*}
        0 &= \left[\left(1-\frac{n\theta}{s}\right)\int_{\Om}f(u_0^+)u_0^+~dx+ \int_\Om f^\prime(u_0^+)(u_0^+)^2~dx\right.\nonumber\\ 
        & \quad \quad\left.-\mu \left(\frac{n\theta}{s}+q-1 \right)\int_\Om(u_0^+)^{1-q}~dx\right]\geq \Gamma_0
    \end{align*}
     which contradicts Lemma \ref{KSE-lemma1}.}
\end{enumerate}
\end{proof}

Recalling Lemma \ref{KSE-lemma4}, we infer that there exists a sequence of differentiable functions $\xi_k:B(0,\epsilon_k) \to \mb R^+$ for some $\epsilon_k>0$ satisfying
 \[\xi_k(0)=1\;\;\; \text{and}\;\;\; \xi_k(v)(u_k-v)\in \mc N_\mu^+,\; \text{for all}\; v \in B(0,\epsilon_k).\]

 \begin{lemma}\label{KSE-lemma8}
 For $\mu \in (0,\mu_0)$ and $k\in \mb N$ large enough, $|\langle \xi_k^\prime(0),\varphi\rangle|$ is uniformly bounded for any $0\leq\varphi\in X_0$, where $|\langle \xi_k^\prime(0),\varphi\rangle|$ denotes the dual action of the derivative of $\xi_k$ at zero  on $\varphi \in X_0$.
 \end{lemma}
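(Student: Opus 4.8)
The plan is to differentiate the Implicit Function Theorem relation that already underlies the construction of $\xi_k$ in Lemma \ref{KSE-lemma4}. Recall that $G_{u_k}(\xi_k(v),v)=0$ for $v\in B(0,\epsilon_k)$, that $\xi_k(0)=1$, and that $G_{u_k}$ is $C^1$ near $(1,0)$. Differentiating this identity at $v=0$ in a direction $0\le\varphi\in X_0$ gives
\[
\langle\xi_k^\prime(0),\varphi\rangle=-\frac{\langle D_vG_{u_k}(1,0),\varphi\rangle}{\partial_tG_{u_k}(1,0)},
\]
so it suffices to bound $\partial_tG_{u_k}(1,0)$ away from zero, and $|\langle D_vG_{u_k}(1,0),\varphi\rangle|$ by a multiple of $\|\varphi\|$, both uniformly in $k$ (for $k$ large). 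For this I would use only that $\{u_k\}$ is bounded in $X_0$ (Lemma \ref{KSE-lemma6}) and that $u_k\in\mc N_\mu^+$.

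For the denominator, by Lemma \ref{KSE-lemma4} we have $\partial_tG_{u_k}(1,0)=\phi_{u_k}^{\prime\prime}(1)$, and inserting $\phi_{u_k}^\prime(1)=0$ (valid since $u_k\in\mc N_\mu$) rewrites it as
\[
\partial_tG_{u_k}(1,0)=\Bigl(\tfrac{n\theta}{s}+q-1\Bigr)\|u_k\|^{\frac{n\theta}{s}}-q\int_\Om f(u_k^+)u_k^+~dx-\int_\Om f^\prime(u_k^+)(u_k^+)^2~dx.
\]
By Lemma \ref{KSE-lemma7}(ii) the $\liminf$ of the right-hand side is strictly positive, so there is $c_0>0$ with $\partial_tG_{u_k}(1,0)\ge c_0$ for all $k$ large. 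This uniform lower bound is the key ingredient, and it is exactly what Lemma \ref{KSE-lemma7}(ii) was designed to deliver.

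For the numerator, differentiating the three terms of $G_{u_k}$ in $v$ at $v=0$ produces a fractional $\tfrac{n}{s}$-Laplacian pairing term $-\tfrac{n\theta}{s}\|u_k\|^{\frac{n\theta}{s}-\frac{n}{s}}\langle u_k,\varphi\rangle$ (with $\langle\cdot,\cdot\rangle$ the Gagliardo pairing on $Q$), the singular term $\mu(1-q)\int_\Om(u_k^+)^{-q}\varphi~dx$, and the exponential term $\int_\Om\bigl(r+\beta(u_k^+)^\beta\bigr)f(u_k^+)\varphi~dx$. The first is bounded by $\tfrac{n\theta}{s}\|u_k\|^{\frac{n\theta}{s}-1}\|\varphi\|$ by H\"older's inequality in $Q$, hence by $C\|\varphi\|$ since $\|u_k\|$ is bounded. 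The exponential term is bounded by $C\|\varphi\|$: after the subcritical estimate $(u_k^+)^\beta\le 1+(u_k^+)^{\frac{n}{n-s}}$ (this is where $\beta<\tfrac{n}{n-s}$ is used), Theorem \ref{moser} and the uniform bound on $\|u_k\|$ keep $\exp(p(u_k^+)^\beta)$ bounded in $L^1(\Om)$ for some $p>1$, and then H\"older's inequality together with the compact embeddings $X_0\hookrightarrow L^t(\Om)$ finishes. The singular term is nonnegative since $\varphi\ge0$, and it enters $\langle\xi_k^\prime(0),\varphi\rangle$ with a minus sign, so it only decreases $\langle\xi_k^\prime(0),\varphi\rangle$; combined with the two-sided control of the other two terms and $\partial_tG_{u_k}(1,0)\ge c_0$, this immediately gives $\langle\xi_k^\prime(0),\varphi\rangle\le C\|\varphi\|$.

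The remaining, and I expect the only genuinely delicate, point is the matching lower bound $\langle\xi_k^\prime(0),\varphi\rangle\ge-C\|\varphi\|$, which reduces to controlling $\int_\Om(u_k^+)^{-q}\varphi~dx$ by $C\|\varphi\|$ uniformly in $k$. The $C^1$-regularity of $G_{u_k}$ used in Lemma \ref{KSE-lemma4} already gives $(u_k^+)^{-q}\in(X_0)^{\ast}$; the extra input is uniformity of its dual norm along the minimizing sequence, which one extracts from the fact that the family $\{u_k\}\subset\mc N_\mu^+$ is bounded in $X_0$ and, by Lemma \ref{KSE-lemma7}, stays quantitatively away from $\mc N_\mu^0=\{0\}$ (so that $\phi_{u_k}^{\prime\prime}(1)$ cannot degenerate). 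With this, $|\langle\xi_k^\prime(0),\varphi\rangle|\le C\|\varphi\|$ uniformly in $k$ and in $0\le\varphi\in X_0$, which is the assertion.
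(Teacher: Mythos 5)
Your reduction of the lemma to the implicit-function-theorem formula $\langle\xi_k^\prime(0),\varphi\rangle=-\langle D_vG_{u_k}(1,0),\varphi\rangle/\partial_tG_{u_k}(1,0)$ is where the argument breaks. The denominator you handle correctly and exactly as the paper does: $\partial_tG_{u_k}(1,0)=\phi_{u_k}^{\prime\prime}(1)$, rewritten via $\phi_{u_k}^\prime(1)=0$, is bounded below by a positive constant thanks to Lemma \ref{KSE-lemma7}(ii). But the numerator contains $\mu(1-q)\int_\Om(u_k^+)^{-q}\varphi~dx$, and you yourself isolate the real difficulty: the lower bound $\langle\xi_k^\prime(0),\varphi\rangle\geq-C\|\varphi\|$ requires $\int_\Om(u_k^+)^{-q}\varphi~dx\leq C\|\varphi\|$ uniformly in $k$. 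The justification you offer --- boundedness of $\{u_k\}$ in $X_0$ together with non-degeneracy of $\phi_{u_k}^{\prime\prime}(1)$ --- does not deliver this: boundedness in $X_0$ gives no control whatsoever on negative powers of $u_k^+$, and in this paper even the bare integrability $(u_k^+)^{-q}\varphi\in L^1(\Om)$ is only established in Lemma \ref{KSE-lemma9}(i), whose proof uses Lemma \ref{KSE-lemma8}. As written, your plan is circular at precisely the step you flag as delicate. (There is a further issue: $v\mapsto\int_\Om((u-v)^+)^{1-q}dx$ is not known to be Fr\'echet differentiable on $X_0$, so $D_vG_{u_k}(1,0)$ is not even well defined.)

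The paper's proof is built to avoid this: it never differentiates the singular term. For the bound $\liminf_k\langle\xi_k^\prime(0),\varphi\rangle\neq-\infty$ it subtracts the Nehari identities for $u_k$ and for $\xi_k(t\varphi)(u_k+t\varphi)$, and uses the one-sided monotonicity $\int_\Om((u_k+t\varphi)^+)^{1-q}dx\geq\int_\Om(u_k^+)^{1-q}dx$ --- valid because $\varphi\geq0$, which is exactly why the lemma is stated only for nonnegative $\varphi$ --- to discard the singular difference quotient with a favorable sign; the surviving terms are then controlled by Lemma \ref{KSE-lemma7}(ii) and the boundedness of $\{u_k\}$, much as in your denominator estimate. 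For the opposite bound it does not use the equation $G_{u_k}=0$ at all, but the Ekeland inequality \eqref{KSE-EVP2} applied to $u=\xi_k(t\varphi)(u_k+t\varphi)\in\mc N_\mu^+$, where again the singular contribution to $I_\mu(u_k)-I_\mu(\xi_k(t\varphi)(u_k+t\varphi))$ has a sign and can be dropped before dividing by $t$ and letting $t\to0^+$. Your treatment of the Gagliardo pairing and of the exponential term (Trudinger--Moser plus H\"older, using $\beta<\frac{n}{n-s}$) is fine and coincides with what the paper does; the missing idea is the one-sided/Ekeland mechanism that replaces differentiation of the singular term.
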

\begin{proof}
We have the following two equations satisfied by $\xi_k(v)$, where $v\in B(0,\epsilon)$ and $u_k$
\begin{align}
    &\|u_k\|^{\frac{n\theta}{s}}-\mu \int_{\Om}(u_k^+)^{1-q}~dx-\int_{\Om}f(u_k^+)u_k^+~dx=0\label{KSE-lemma8-eq1}\\
    &(\xi_k(v))^{\frac{n\theta}{s}}\|u_k-v\|^{\frac{n\theta}{s}}-\mu (\xi_k(v))^{1-q}\int_\Om ((u_k-v)^+)^{1-q}~dx\label{KSE-lemma8-eq2}\\
    &\quad \quad-\xi_k(v) \int_\Om f(\xi_k(v)(u_k-v)^+)(u_k-v)^+~dx=0.\nonumber
\end{align}
For any $0\leq\varphi\in X_0$, we choose $t>0$ small enough so that $t\varphi \in B(0,\epsilon_k)$. Now putting $v = -t\varphi$ in \eqref{KSE-lemma8-eq2} and subtracting it from \eqref{KSE-lemma8-eq1}, we obtain
\begin{align}
    &\left(\xi_k^{\frac{n\theta}{s}}(t\varphi)-1\right)\|u_k+t\varphi\|^{\frac{n\theta}{s}}-\mu \left(\xi_k^{1-q}(t\varphi)-1\right)\int_\Om ((u_k+t\varphi)^+)^{1-q}~dx\label{KSE-lemma8-eq3}\\
    &\quad\quad + (\|u_k+t\varphi\|^{\frac{n\theta}{s}}-\|u_k\|^{\frac{n\theta}{s}})-\mu \left(\int_\Om ((u_k+t\varphi)^+)^{1-q}~dx-\int_\Om (u_k^+)^{1-q}~dx\right)\nonumber\\
   & \quad \quad \quad \quad -\int_\Om \left(f(\xi_k(t\varphi)(u_k+t\varphi)^+)(u_k+t\varphi)^+-f(u_k^+)(u_k^+)\right)~dx=0.\nonumber
\end{align}
We know the following-
\begin{enumerate}
    \item[(i)] $\displaystyle \lim_{t\to0^+}\left(\frac{\xi_k^{\frac{n\theta}{s}}(t\varphi)-1}{t}\right)=\frac{n\theta}{s}\xi_k^{\frac{n\theta}{s}-1}(0)\langle \xi_k^\prime(0),\varphi\rangle =\frac{n\theta}{s}\langle \xi_k^\prime(0),\varphi\rangle$,
    \item[(ii)] $\displaystyle\lim_{t\to0^+} \frac{\|u_k+t\varphi\|^{\frac{n\theta}{s}}-\|u_k\|^{\frac{n\theta}{s}}}{t}= \frac{n\theta}{s}\|u_k\|^{\frac{n}{s}(\theta-1)}\mc A(u_k,\varphi)$, where
    \[\mc A(w_1,w_2):= \int_{\mb R^{2n}}\frac{|w_1(x)-w_1(y)|^{\frac{n}{s}-2}(w_1(x)-w_1(y))(w_2(x)-w_2(y))}{|x-y|^{2n}}~dxdy.,\]
    \item[(iii)] We estimate the exponential term as follows-
    \begin{align*}
       & \lim_{t\to0^+}\int_\Om \frac{f(\xi_k(t\varphi)(u_k+t\varphi)^+)(u_k+t\varphi)^+-f(u_k^+)(u_k^+)}{t}\\
       & = \lim_{t\to0^+}\int_\Om  \frac{f(\xi_k(t\varphi)(u_k+t\varphi)^+)(u_k+t\varphi)^+-f((u_k+t\varphi)^+)(u_k+t\varphi)^+}{t}\\
       & \quad \quad+ \lim_{t\to0^+}\int_\Om  \frac{f((u_k+t\varphi)^+)(u_k+t\varphi)^+ -f(u_k^+)(u_k^+)}{t}\\
       &=  \langle \xi_k^\prime(0),\varphi\rangle\int_\Om  \left(f^\prime(u_k^+)(u_k^+)^2+f(u_k^+)u_k^+\right)~dx + \int_\Om (f^\prime(u_k^+)u_k^+ + f(u_k^+))\varphi~dx.
    \end{align*}
\end{enumerate}
Using the fact that ${\displaystyle\int_\Om ((u_k+t\varphi)^+)^{1-q}~dx-\int_\Om (u_k^+)^{1-q}~dx }\geq 0 $ in \eqref{KSE-lemma8-eq3}, dividing it by $t>0$, passing through the limit as $t\to 0^+$ and using $(i)$, $(ii)$ and $(iii)$ above, we get
\begin{align*}
    0 &\leq \frac{n\theta}{s}\left(\langle\xi_k^\prime(0),\varphi \rangle\|u_k\|^{\frac{n\theta}{s}}+ \|u_k\|^{\frac{n}{s}(\theta-1)}\mc A(u_k,\varphi)\right) -\mu(1-q)\langle\xi_k^\prime(0),\varphi \rangle\int_\Om (u_k^+)^{1-q}~dx\\
    &\quad \quad-\langle \xi_k^\prime(0),\varphi\rangle\int_\Om  \left(f^\prime(u_k^+)(u_k^+)^2+f(u_k^+)u_k^+\right)~dx
    -\int_\Om (f^\prime(u_k^+)u_k^++ f(u_k^+))\varphi~dx\\
    &= \langle\xi_k^\prime(0),\varphi \rangle\left(  \left(\frac{n\theta}{s}+q-1\right)\|u_k\|^{\frac{n\theta}{s}}+\int_\Om \left(-qf(u_k^+)u_k^+ - f^\prime(u_k^+)(u_k^+)^2\right)~dx \right)\\
    & \quad \quad+ \frac{n\theta}{s}\|u_k\|^{\frac{n}{s}(\theta-1)}\mc A(u_k,\varphi) -\int_\Om (f^\prime(u_k^+)u_k^++ f(u_k^+))\varphi~dx\\
    &\leq \langle\xi_k^\prime(0),\varphi \rangle\left(  \left(\frac{n\theta}{s}+q-1\right)\|u_k\|^{\frac{n\theta}{s}}+\int_\Om \left(-qf(u_k^+)u_k^+ - f^\prime(u_k^+)(u_k^+)^2\right)~dx \right)\\
    & \quad \quad+ \frac{n\theta}{s}\|u_k\|^{\frac{n}{s}(\theta-1)}\mc A(u_k,\varphi).
\end{align*}
where the second line is obtained using the fact that $u_k\in \mc N_\mu$.
From this estimate, it follows that
\[ \langle\xi_k^\prime(0),\varphi \rangle \geq  \frac{-\displaystyle\frac{n\theta}{s}\|u_k\|^{\frac{n}{s}(\theta-1)}\mc A(u_k,\varphi) }{  \left(\frac{n\theta}{s}+q-1\right)\|u_k\|^{\frac{n\theta}{s}}+\displaystyle\int_\Om \left(-qf(u_k^+)u_k^+ - f^\prime(u_k^+)(u_k^+)^2\right)~dx }.\]
Therefore, using $\mc A(u_k,\varphi)\leq \|u_k\|^{\frac{n}{s}-1}\|\varphi\|$, boundedness of $\{u_k\}$ in $X_0$ and Lemma \ref{KSE-lemma7}$(ii)$,
 we deduce that $\liminf\limits_{k \to \infty}\;\langle\xi_k^\prime(0),\varphi \rangle \neq -\infty$. We  now aim to establish that even $\liminf\limits_{k \to \infty}\;\langle\xi_k^\prime(0),\varphi \rangle \neq +\infty$ which will essentially complete the proof. It is easy to verify that
 \[|\xi_k(t\varphi)-1|\|u_k\|+\xi_k(t\varphi)\|t\varphi\| \geq \|\xi_k(t\varphi)(u_k+t\varphi)-u_k\|\]
 and for large enough $k$, we may assume that $\xi_k(t\varphi)\geq \xi_k(0)=1$. Therefore using $u_k,\; \xi_k(t\varphi)(u_k+t\varphi)\in \mc N_\mu^+$, we obtain the following estimate
 \begin{equation}\label{KSE-lemma8-eq4}
 \begin{split}
     &(\xi_k(t\varphi)-1)\frac{\|u_k\|}{k}+\xi_k(t\varphi) \frac{\|t\varphi\|}{k} \\
     &\geq \frac{\|\xi_k(t\varphi)(u_k+t\varphi)-u_k\|}{k}
      \geq I_\mu(u_k)- I_\mu(\xi_k(t\varphi)(u_k+t\varphi))\\
     & =\left(\frac{s}{n\theta}-\frac{\mu}{1-q} \right)\left(\|u_k\|^{\frac{n\theta}{s}}-\|u_k+t\varphi\|^{\frac{n\theta}{s}}\right)+ \left(\frac{s}{n\theta}-\frac{\mu}{1-q} \right) \|u_k+t\varphi\|^{\frac{n\theta}{s}}\left(1-\xi_k^{\frac{n\theta}{s}}(t\varphi)\right)\\
     &\quad \quad + \frac{\mu}{1-q}\int_\Om \left(f(u_k^+)u_k^+- f(\xi_k(t\varphi)(u_k+t\varphi)^+)\xi_k(t\varphi)(u_k+t\varphi)^+\right)~dx\\
     & \quad \quad \quad + \int_\Om\left(F(\xi_k(t\varphi)(u_k+t\varphi))-F(u_k) \right)~dx.
     \end{split}
 \end{equation}
 The following is easy to verify with some computations-
 \begin{enumerate}
     \item[(i)] $\lim\limits_{t\to 0^+}\displaystyle\int_\Om\frac{f(u_k^+)u_k^+- f(\xi_k(t\varphi)(u_k+t\varphi)^+)\xi_k(t\varphi)(u_k+t\varphi)^+}{t}\\
     = -\langle \xi_k^\prime(0),\varphi\rangle \int_\Om \left(f^\prime(u_k^+)(u_k^+)^2+f(u_k^+)u_k^+\right)~dx - \int_\Om \left(f^\prime(u_k^+)u_k^++f(u_k^+)\right)\varphi~dx$,
     \item[(ii)]$\lim\limits_{t\to 0^+}\displaystyle\int_\Om\frac{F(\xi_k(t\varphi)(u_k+t\varphi))-F(u_k)}{t}= \langle \xi_k^\prime(0),\varphi\rangle\int_\Om f(u_k^+)u_k^+~dx+ \int_\Om f(u_k^+)\varphi~dx$.
 \end{enumerate}
 Therefore dividing \eqref{KSE-lemma8-eq4} by $t>0$ and passing through limit $t\to 0^+$ we obtain
 \begin{align*}
     &(\xi_k(t\varphi)-1)\frac{\|u_k\|}{k}+\xi_k(t\varphi) \frac{\|t\varphi\|}{k} \\
     & \leq \langle \xi_k^\prime(0),\varphi\rangle \left[ - \left(\frac{s}{n\theta}- \frac{1}{1-q}\right)\frac{n\theta}{s}\|u_k\|^{\frac{n\theta}{s}}-\frac{1}{1-q}\int_\Om \left(f^\prime(u_k^+)(u_k^+)^2+qf(u_k^+)u_k^+\right)~dx\right]\\
     & \quad \quad + \left[ - \left(\frac{s}{n\theta}- \frac{1}{1-q}\right)\frac{n\theta}{s}\|u_k\|^{\frac{n}{s}(\theta-1)}\mc A(u_k,\varphi)-\frac{1}{1-q}\int_\Om \left(f^\prime(u_k^+)u_k^+ +qf(u_k^+)\right)\varphi~dx\right].
 \end{align*}
 This implies that
     \begin{align*}
         \frac{\|\varphi\|}{k} &\geq \frac{\langle \xi_k^\prime(0),\varphi\rangle}{1-q} \left[ - \left(\frac{n\theta}{s}- 1+q\right)\|u_k\|^{\frac{n\theta}{s}}-\int_\Om \left(f^\prime(u_k^+)(u_k^+)^2+qf(u_k^+)u_k^+\right)~dx-\frac{(1-q)\|u_k\|^{\frac{n\theta}{s}}}{k}\right]\\
         & \quad \quad +  \frac{1}{1-q} \left(\frac{n\theta}{s}- 1+q\right)\|u_k\|^{\frac{n}{s}(\theta-1)}\mc A(u_k,\varphi)-\frac{1}{1-q}\int_\Om \left(f^\prime(u_k^+)u_k^+ +qf(u_k^+)\right)\varphi~dx.
     \end{align*}
     From this estimate, we can conclude that $\liminf\limits_{k \to \infty}\;\langle\xi_k^\prime(0),\varphi \rangle \neq +\infty$ because otherwise boundedness of $\{u_k\}$ in $X_0$ and Lemma \ref{KSE-lemma7}$(ii)$ will give a contradiction. This completes the proof.
\end{proof}

\begin{lemma}\label{KSE-lemma9}
Let $\mu \in (0,\mu_0)$ and $\{u_k\}\subset \mc N_\mu^+$ be the sequence obtained in \eqref{KSE-EVP1} and \eqref{KSE-EVP2} then for any $\varphi \in X_0$, the following holds
\begin{enumerate}
    \item[(i)] $(u_k^+)^{-q}\varphi \in L^1(\Om)$,
    \item[(ii)] As $k \to \infty$, we have
   \begin{align*}
   & \left(\|u_k\|^{\frac{n}{s}(\theta-1)}\right)\int_{\mb R^{2n}}\frac{|u_k(x)-u_k(y)|^{\frac{n}{s}}(u_k(x)-u_k(y)) (\varphi(x)-\varphi(y))}{|x-y|^{2n}}~dxdy\\
    & \quad \quad - \mu \int_\Om (u_k^+)^{-q}\varphi~dx - \int_{\Om}f(u_k^+)\varphi~dx =o_k(1).
    \end{align*}
\end{enumerate}
\end{lemma}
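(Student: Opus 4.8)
The plan is to prove both parts by differentiating the Ekeland inequality \eqref{KSE-EVP2} at $t=0^+$ along the two $\mc N_\mu^+$-valued curves supplied by Lemma \ref{KSE-lemma4}, namely $t\mapsto w_t^{\pm}:=\xi_k(\mp t\varphi)\,(u_k\pm t\varphi)$, which lie in $\mc N_\mu^+$ as soon as $t\|\varphi\|<\epsilon_k$. Since $\mc A(u_k,\cdot)$, $\int_\Om(u_k^+)^{-q}(\cdot)\,dx$ and $\int_\Om f(u_k^+)(\cdot)\,dx$ are all linear in the test function, I would only treat $0\le\varphi\in X_0$, recovering a general $\varphi$ from $\varphi=\varphi^+-\varphi^-$ (for (i) by additivity of membership in $L^1$, for (ii) by linearity). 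It is also convenient to record first that one may regard $u_k\ge0$ in the limit analysis: testing \eqref{KSE-EVP2} along $\xi_k(-t u_k^-)(u_k+t u_k^-)\in\mc N_\mu^+$, using $(u_k+t u_k^-)^+=u_k^+$, the inequality $\mc A(u_k,u_k^-)\le-\|u_k^-\|^{n/s}$, Lemma \ref{KSE-lemma8} with test function $u_k^-$, and Lemmas \ref{KSE-lemma6} and \ref{KSE-lemma7}(i), one obtains $\|u_k^-\|\to0$.

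For part (i), fix $0\le\varphi\in X_0$ and put $w_t:=\xi_k(-t\varphi)(u_k+t\varphi)\in\mc N_\mu^+$. Inserting $w_t$ into \eqref{KSE-EVP2} gives $I_\mu(u_k)-I_\mu(w_t)\le\frac1k\|w_t-u_k\|\le\frac1k\big(|\xi_k(-t\varphi)-1|\,\|u_k\|+\xi_k(-t\varphi)\,t\|\varphi\|\big)=O(t)$ as $t\to0^+$. In the expansion of $I_\mu(u_k)-I_\mu(w_t)$ the contributions of $\frac{s}{n\theta}\|\cdot\|^{n\theta/s}$, of $\int_\Om F(\cdot^+)\,dx$, and of the prefactor $\xi_k(-t\varphi)^{1-q}-1$ in the singular term are each $O(t)$ (by differentiability of $\xi_k$ at $0$ and the boundedness of $\{u_k\}$ from Lemma \ref{KSE-lemma6}), while — using that $\varphi\ge0$ forces $(u_k+t\varphi)^+\ge u_k^+$ — the remaining piece equals $\frac{\mu}{1-q}\int_\Om\big(((u_k+t\varphi)^+)^{1-q}-(u_k^+)^{1-q}\big)dx\ge0$ up to an $O(t)$ error. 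Hence $\frac1t\int_\Om\big(((u_k+t\varphi)^+)^{1-q}-(u_k^+)^{1-q}\big)dx\le C$ uniformly in small $t>0$; the integrand is non-negative and converges a.e., as $t\to0^+$, to $(1-q)(u_k^+)^{-q}\varphi$, so Fatou's lemma yields $(u_k^+)^{-q}\varphi\in L^1(\Om)$ (and, along the way, that $\{u_k=0,\varphi>0\}$ is null, so $u_k>0$ a.e.).

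For part (ii), write $Q_k(\varphi):=\|u_k\|^{\frac{n}{s}(\theta-1)}\mc A(u_k,\varphi)-\mu\int_\Om(u_k^+)^{-q}\varphi\,dx-\int_\Om f(u_k^+)\varphi\,dx$. A chain-rule computation along $w_t^+=\xi_k(-t\varphi)(u_k+t\varphi)$, in which the coefficient of $\langle\xi_k'(0),\varphi\rangle$ is exactly $-\phi'_{u_k}(1)=0$ because $u_k\in\mc N_\mu$, should give $\frac{d}{dt}\big|_{0^+}I_\mu(w_t^+)=Q_k(\varphi)$; the only non-routine ingredient is the identity $\frac{d}{dt}\big|_{0^+}\int_\Om((u_k+t\varphi)^+)^{1-q}dx=(1-q)\int_\Om(u_k^+)^{-q}\varphi\,dx$, obtained from part (i) plus dominated convergence via the concavity bound $((u_k+t\varphi)^+)^{1-q}-(u_k^+)^{1-q}\le(1-q)(u_k^+)^{-q}(t\varphi)$. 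Dividing \eqref{KSE-EVP2} (with $u=w_t^+$) by $t$ and letting $t\to0^+$ then gives $Q_k(\varphi)\ge-\frac1k\big(|\langle\xi_k'(0),\varphi\rangle|\,\|u_k\|+\|\varphi\|\big)=-o_k(1)$ by Lemmas \ref{KSE-lemma6} and \ref{KSE-lemma8}. Running the same computation along $w_t^-=\xi_k(t\varphi)(u_k-t\varphi)\in\mc N_\mu^+$ produces $\frac{d}{dt}\big|_{0^+}I_\mu(w_t^-)=-Q_k(\varphi)$, hence $Q_k(\varphi)\le o_k(1)$; combining, $Q_k(\varphi)=o_k(1)$ for $\varphi\ge0$, and the general case follows by linearity of $Q_k$.

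The hard part will be the passage to the limit in the singular difference quotients $\frac1t\int_\Om\big(((u_k\pm t\varphi)^+)^{1-q}-(u_k^+)^{1-q}\big)dx$. For (i) only the one-sided (Fatou) bound is needed, but (ii) requires the exact value of the limit, whose justification rests on part (i) together with a careful dominated/monotone convergence argument exploiting the concavity of $z\mapsto z^{1-q}$ on $[0,\infty)$ and a decomposition of $\Om$ according to the sign of $u_k$ — delicate precisely near the set where $(u_k\pm t\varphi)^+$ degenerates to $0$. The reduction to $u_k\ge0$ via the $u_k^-$-test in \eqref{KSE-EVP2} is a secondary technical point that should be disposed of first.
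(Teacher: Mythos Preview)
Your argument for part (i) is essentially the paper's: differentiate the Ekeland inequality along $\xi_k(-t\varphi)(u_k+t\varphi)$, isolate the singular difference quotient (which is nonnegative), and apply Fatou. The preliminary reduction to $u_k\ge0$ is not actually needed for the lemma itself (the paper only carries it out later, in Theorem~\ref{KSE-first-sol}, \emph{using} this lemma); your computations go through with $(u_k^+)$ in place of $u_k$ since part~(i) forces $\{u_k^+=0,\ \varphi>0\}$ to be null.

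For part (ii) the two proofs diverge. The paper obtains only the one--sided inequality $Q_k(\varphi)\ge o_k(1)$ for $\varphi\ge0$, and then, to extend it to an arbitrary $\psi\in X_0$, tests with $w_\epsilon^+=(u_k^++\epsilon\psi)^+$, expands via $w_\epsilon^+=w_\epsilon+w_\epsilon^-$, cancels the $\mc N_\mu$--identity, and spends most of the work proving $\displaystyle\lim_{\epsilon\to0^+}\tfrac{1}{\epsilon}\mc A(u_k,w_\epsilon^-)\le0$ by splitting $\mb R^{2n}$ according to $\Om_\epsilon^-$ and estimating each piece. Your route is different: you run Ekeland along \emph{both} curves $\xi_k(\mp t\varphi)(u_k\pm t\varphi)$, obtain $Q_k(\varphi)\ge o_k(1)$ and $Q_k(\varphi)\le o_k(1)$ directly for $\varphi\ge0$, and pass to general $\varphi$ by $\varphi=\varphi^+-\varphi^-$. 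This bypasses the $\mc A(u_k,w_\epsilon^-)$ analysis entirely. The price is that you need the \emph{exact} value of the one--sided derivative of $t\mapsto\int_\Om((u_k- t\varphi)^+)^{1-q}dx$, not just a Fatou bound; here the concavity inequality you quote goes the wrong way, but the required domination is still available from part~(i): for $u_k>0$, $\varphi>0$ one checks that $t\mapsto\big((u_k^+)^{1-q}-((u_k-t\varphi)^+)^{1-q}\big)/t$ is bounded above by $(u_k^+)^{-q}\varphi\in L^1(\Om)$ (its maximum over $t>0$ is attained at $t=u_k/\varphi$), so dominated convergence applies. With that point made explicit, your approach is correct and somewhat shorter than the paper's.
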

\begin{proof}
At first we assume $0\leq \varphi\in X_0$ and recalling \eqref{KSE-lemma8-eq4}, we consider
\begin{align*}
     &(\xi_k(t\varphi)-1)\frac{\|u_k\|}{k}+\xi_k(t\varphi) \frac{\|t\varphi\|}{k} \geq I_\mu(u_k)- I_\mu(\xi_k(t\varphi)(u_k+t\varphi))\\
     & =-\frac{s}{n\theta}\left(\xi_k^{\frac{n\theta}{s}}(t\varphi)-1\right)\|u_k\|^{\frac{n\theta}{s}}+ \frac{s}{n\theta}\xi_k^{\frac{n\theta}{s}}(t\varphi)\left(\|u_k\|^{\frac{n\theta}{s}}-\|u_k+t\varphi\|^{\frac{n\theta}{s}}\right)\\
      &\quad \quad+\frac{\mu}{1-q}\left(\xi_k^{1-q}(t\varphi)-1\right)
    \int_\Om((u_k+t\varphi)^+)^{1-q}~dx
      +\frac{\mu}{1-q}\int_\Om\left(((u_k+t\varphi)^+)^{1-q}-(u_k^+)^{1-q}\right)~dx\\
      &\quad \quad \quad+ \int_\Om \left(F(\xi_k(t\varphi) (u_k+t\varphi))-F(u_k)\right)~dx.
\end{align*}
On dividing the above estimate by $t>0$ and then passing through the limit as $t\to 0^+$, we obtain
\begin{align*}
    &\langle \xi_k^\prime(0),\varphi\rangle \frac{\|u_k\|}{k}+ \frac{\|\varphi\|}{k}\\
    &\geq - \langle \xi_k^\prime(0),\varphi\rangle\left[ \|u_k\|^{\frac{n\theta}{s}}-\mu \int_\Om(u_k^+)^{1-q}~dx-f(u_k^+)u_k^+~dx\right]- \|u_k\|^{\frac{n}{s}(\theta-1)}\mc A(u_k,\varphi)\\
    &\quad\quad + \int_\Om f(u_k^+)\varphi~dx+\frac{\mu}{1-q}\liminf_{t\to 0^+}\int_\Om\frac{((u_k+t\varphi)^+)^{1-q}-(u_k^+)^{1-q}}{t}~dx\\
    & = - \|u_k\|^{\frac{n}{s}(\theta-1)}\mc A(u_k,\varphi)+ \int_\Om f(u_k^+)\varphi~dx +\frac{\mu}{1-q}\liminf_{t\to 0^+}\int_\Om\frac{((u_k+t\varphi)^+)^{1-q}-(u_k^+)^{1-q}}{t}~dx,
\end{align*}
since $u_k \in \mc N_\mu$. Since $((u_k+t\varphi)^+)^{1-q}-(u_k^+)^{1-q}\geq 0$, using Fatou's Lemma we infer that
\[\liminf_{t\to 0^+}\int_\Om\frac{((u_k+t\varphi)^+)^{1-q}-(u_k^+)^{1-q}}{t}~dx \; {\geq} \;(1-q)\int_\Om (u_k^+)^{-q}\varphi~dx.\]
Therefore using the above estimate, we conclude that
\begin{align*}
    \mu \int_\Om (u_k^+)^{-q}\varphi~dx &\leq  \frac{\langle \xi_k^\prime(0),\varphi\rangle\|u_k\|+\|\varphi\|}{k}+  \|u_k\|^{\frac{n}{s}(\theta-1)}\mc A(u_k,\varphi)-\int_\Om f(u_k^+)\varphi~dx \\
    & \leq \frac{\langle \xi_k^\prime(0),\varphi\rangle\|u_k\|+\|\varphi\|}{k}+  \|u_k\|^{\frac{n}{s}(\theta-1)}\mc A(u_k,\varphi),
\end{align*}
from where $(i)$ follows using the boundedness of $\{u_k\}$ in $X_0$. Since it holds
\begin{align*}
    \langle \xi_k^\prime(0),\varphi\rangle \frac{\|u_k\|}{k}+ \frac{\|t\varphi\|}{k}\geq - \|u_k\|^{\frac{n}{s}(\theta-1)}\mc A(u_k,\varphi)+ \int_\Om f(u_k^+)\varphi~dx +\mu \int_\Om (u_k^+)^{-q}\varphi~dx,
\end{align*}
using Lemma \ref{KSE-lemma8} and passing $k \to \infty$ in the above estimate, we get
\begin{equation}\label{KSE-lemma9-eq1}
    \|u_k\|^{\frac{n}{s}(\theta-1)}\mc A(u_k,\varphi)- \int_\Om f(u_k^+)\varphi~dx -\mu \int_\Om (u_k^+)^{-q}\varphi~dx \geq o_k(1).
\end{equation}
Now we aim at proving that \eqref{KSE-lemma9-eq1} holds for any $\varphi\in X_0$ which will essentially establish $(ii)$. So let $\psi \in X_0$ and $\epsilon>0$ be given then we define $w_\epsilon := u_k^++\epsilon \psi$. We know that \eqref{KSE-lemma9-eq1} holds true by taking $\varphi = w_\epsilon^+$ which gives us
\begin{equation}\label{KSE-lemma9-eq2}
    o_k(1)\leq \|u_k\|^{\frac{n}{s}(\theta-1)}\mc A(u_k,w_\epsilon+w_\epsilon^-)- \int_\Om f(u_k^+)(w_\epsilon+w_\epsilon^-)~dx -\mu \int_\Om (u_k^+)^{-q}(w_\epsilon+w_\epsilon^-)~dx.
\end{equation}
We can rewrite the following term as
\[\mc A(u_k,w_\epsilon+w_\epsilon^-)= \mc A(u_k,u_k^+)+ \epsilon\mc A(u_k,\psi)+ A(u_k,w_\epsilon^-) \leq \|u_k\|^{\frac{n}{s}}+ \epsilon\mc A(u_k,\psi)+ A(u_k,w_\epsilon^-), \]
using the fact that
\[(u_k(x)-u_k(y))(u_k^+(x)-u_k^+(y))\leq |u_k(x)-u_k(y)|^2,\;\text{for a.e.}\; x,y\in \mb R^n.\]
With this, \eqref{KSE-lemma9-eq2} reduces to
\begin{equation}\label{KSE-lemma9-eq3}
\begin{split}
     o_k(1) &\leq \left[ \|u_k\|^{\frac{n\theta}{s}}-\mu \int_\Om(u_k^+)^{1-q}~dx-f(u_k^+)u_k^+~dx\right]\\
     &\quad\quad+ \epsilon \left[\|u_k\|^{\frac{n}{s}(\theta-1)}\mc A(u_k,\psi)- \int_\Om f(u_k^+)\psi~dx -\mu \int_\Om (u_k^+)^{-q}\psi~dx\right]\\
     & \quad \quad + \left[\|u_k\|^{\frac{n}{s}(\theta-1)}\mc A(u_k,w_\epsilon^-)- \int_\Om f(u_k^+)w_\epsilon^-~dx -\mu \int_\Om (u_k^+)^{-q}w_\epsilon^-~dx\right]\\
     &=\epsilon \left[\|u_k\|^{\frac{n}{s}(\theta-1)}\mc A(u_k,\psi)- \int_\Om f(u_k^+)\psi~dx -\mu \int_\Om (u_k^+)^{-q}\psi~dx\right]\\
     & \quad \quad + \left[\|u_k\|^{\frac{n}{s}(\theta-1)}\mc A(u_k,w_\epsilon^-)- \int_\Om f(u_k^+)w_\epsilon^-~dx -\mu \int_\Om (u_k^+)^{-q}w_\epsilon^-~dx\right]\\
     & \leq \epsilon \left[\|u_k\|^{\frac{n}{s}(\theta-1)}\mc A(u_k,\psi)- \int_\Om f(u_k^+)\psi~dx -\mu \int_\Om (u_k^+)^{-q}\psi~dx\right] + \|u_k\|^{\frac{n}{s}(\theta-1)}\mc A(u_k,w_\epsilon^-)
     \end{split}
\end{equation}
where the second line is due to $u_k \in \mc N_\mu$ and the third inequality is due to $w_\epsilon^-\geq 0$ on $\Om$. Now we claim that
\[\lim_{\epsilon\to 0^+}\mc A(u_k,w_\epsilon^-)=0.\]
We know that $w_\epsilon^-=-(u_k^+ +\epsilon \psi)$ on $\Omega_\epsilon^- :=\{x\in \Om:\; w_\epsilon\leq 0\}$ and for a.e. $x,y\in \mb R^n$,
\[(u_k(x)-u_k(y))(u_k^+(x)-u_k^+(y))\geq |u_k^+(x)-u_k^+(y)|^2.\]
Setting
\[g_{k,\epsilon}(x,y)=\frac{ |u_k(x)-u_k(y)|^{\frac{n}{s}-2}(u_k(x)-u_k(y))(w_\epsilon^-(x)-w_\epsilon^-(y))}{|x-y|^{2n}},\]
\[\text{and}\; h_k(x,y)= \frac{ |u_k(x)-u_k(y)|^{\frac{n}{s}-2}(u_k(x)-u_k(y))(\psi(x)-\psi(y))}{|x-y|^{2n}} \]
 we note that
\begin{equation}\label{KSE-lemma9-eq4}
\begin{split}
    \mc A(u_k,w_\epsilon^-) & = \int_{\Om_\epsilon^-}\int_{\Om_\epsilon^-} g_{k,\epsilon}(x,y)~dxdy + 2 \int_{\Om_\epsilon^-}\int_{\mb R^n \setminus \Om_\epsilon^-}g_{k,\epsilon}(x,y)~dxdy \\
    & \leq -\int_{\Om_\epsilon^-}\int_{\Om_\epsilon^-}\frac{ |u_k(x)-u_k(y)|^{\frac{n}{s}-2}|u_k^+(x)-u_k^+(y)|^2}{|x-y|^{2n}}~dxdy-\epsilon \int_{\Om_\epsilon^-}\int_{\Om_\epsilon^-} h_k(x,y)dxdy\\
    &\quad -2\int_{\Om_\epsilon^-}\int_{\mb R^n \setminus\Om_\epsilon^-}\frac{ |u_k(x)-u_k(y)|^{\frac{n}{s}-2}|u_k^+(x)-u_k^+(y)|^2}{|x-y|^{2n}}~dxdy\\
    &\quad\quad\quad-2\epsilon \int_{\Om_\epsilon^-}\int_{\mb R^n \setminus\Om_\epsilon^-} h_k(x,y)dxdy\\
    & \leq -\epsilon \left(\int_{\Om_\epsilon^-}\int_{\Om_\epsilon^-} h_k(x,y)~dxdy+2 \int_{\Om_\epsilon^-}\int_{\mb R^n \setminus\Om_\epsilon^-} h_k(x,y)~dxdy \right)\\
    & \leq \epsilon \left(\int_{\Om_\epsilon^-}\int_{\Om_\epsilon^-} |h_k(x,y)|~dxdy+2 \int_{\Om_\epsilon^-}\int_{\mb R^n \setminus\Om_\epsilon^-}| h_k(x,y)|~dxdy \right).
    \end{split}
\end{equation}
Using H\"{o}lder inequality and boundedness of $\{\|u_k\|\}$, we get
\begin{equation}\label{KSE-lemma9-eq5}
\int_{\Om_\epsilon^-}\int_{\Om_\epsilon^-} |h_k(x,y)|~dxdy  \leq C  \left( \int_{\Om_\epsilon^-}\int_{\Om_\epsilon^-} \frac{|\psi(x)-\psi(y)|^{\frac{n}{s}}}{|x-y|^{2n}}~dxdy\right)^{\frac{s}{n}} \end{equation}
where $C>0$ is a constant. Since meas$(\Om_\epsilon^-)\to 0$ as $\epsilon\to 0^+$ and $\psi \in X_0\subset L^{\frac{n}{s}}(\mb R^n)$, so \eqref{KSE-lemma9-eq5} suggests that
\begin{equation}\label{KSE-lemma9-eq6}
\lim_{\epsilon \to 0^+}\int_{\Om_\epsilon^-}\int_{\Om_\epsilon^-} |h_k(x,y)|~dxdy  =0.
\end{equation}
Similarly we also have
\begin{equation}\label{KSE-lemma9-eq7}
    \int_{\Om_\epsilon^-}\int_{\mb R^n \setminus \Om_\epsilon^-} |h_k(x,y)|~dxdy  \leq C  \left( \int_{\Om_\epsilon^-}\int_{\mb R^n \setminus\Om_\epsilon^-} \frac{|\psi(x)-\psi(y)|^{\frac{n}{s}}}{|x-y|^{2n}}~dxdy\right)^{\frac{s}{n}}.
\end{equation}
Since $\psi \in X_0\subset L^{\frac{n}{s}}(\mb R^n)$,
{we have $\frac{(\psi(x)-\psi(y))}{|x-y|^{2s}}\in L^{\frac{n}{s}}(\mathbb{R}^{2n})$.}
Then for any $\delta>0$ there exists a $R_\delta>0$ large enough such that
\[ \left( \int_{Supp(\psi)}\int_{\mb R^n \setminus B(0,R_\delta)} \frac{|\psi(x)-\psi(y)|^{\frac{n}{s}}}{|x-y|^{2n}}~dxdy\right)^{\frac{s}{n}}< \frac{\delta}{2}.\]
Also, $\Om_\epsilon^-\subset Supp(\psi)$ and meas$(\Om_\epsilon^-\times B(0,R_\delta)) \to 0$ as $\epsilon\to 0^+$ which implies that there exists a $\rho_\delta>0$ and $\epsilon_\delta>0$ such that
\[meas(\Om_\epsilon^- \times B(0,R_\delta))< \rho_\delta\;\; \text{and}\;\; \left( \int_{\Om_\epsilon^-}\int_{ B(0,R_\delta)} \frac{|\psi(x)-\psi(y)|^{\frac{n}{s}}}{|x-y|^{2n}}~dxdy\right)^{\frac{s}{n}}< \frac{\delta}{2}\]
when $\epsilon \in (0,\epsilon_\delta)$. Combining these, we obtain
\[\left( \int_{\Om_\epsilon^-}\int_{ \mb R^n} \frac{|\psi(x)-\psi(y)|^{\frac{n}{s}}}{|x-y|^{2n}}~dxdy\right)^{\frac{s}{n}}< {\delta}\]
for $\epsilon \in (0,\epsilon_\delta)$.
As a consequence, putting these in \eqref{KSE-lemma9-eq7}, we have
\begin{equation}\label{KSE-lemma9-eq8}
    \lim_{\epsilon\to 0^+} \int_{\Om_\epsilon^-}\int_{\mb R^n \setminus \Om_\epsilon^-} |h_k(x,y)|~dxdy  =0.
\end{equation}
Finally dividing \eqref{KSE-lemma9-eq4}  by $\epsilon>0$, passing $\epsilon \to 0^+$ and using \eqref{KSE-lemma9-eq6} and \eqref{KSE-lemma9-eq8}, we get
\begin{equation}\label{KSE-lemma9-eq9}
    \lim_{\epsilon\to 0^+}\mc A(u_k,w_\epsilon^-)\leq 0.
\end{equation}
Hence dividing \eqref{KSE-lemma9-eq3} by $\epsilon>0$ and passing through the limit $\epsilon\to 0^+$, we get as $k \to \infty$
\[o_k(1)\leq \|u_k\|^{\frac{n}{s}(\theta-1)}\mc A(u_k,\psi)- \int_\Om f(u_k^+)\psi~dx -\mu \int_\Om (u_k^+)^{-q}\psi~dx\]
which finishes the proof of $(ii)$ due to arbitrariness of $\psi$.
\end{proof}

We now establish the existence of first solution to $(P_{\mu})$.

\begin{theorem}\label{KSE-first-sol}
$(P_\mu)$ admits a non negative weak solution $u_0 \in \mc N_\mu^+$ when $\mu \in (0,\mu_0)$ such that $I_\mu(u_0)= \Upsilon^+_\mu$.
\end{theorem}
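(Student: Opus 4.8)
The plan is to show that the weak limit of the Ekeland minimizing sequence associated with $\Upsilon_\mu^+$ realizes the infimum and solves $(P_\mu)$. Let $\{u_k\}\subset\mc N_\mu^+$ be the sequence produced in \eqref{KSE-EVP1}--\eqref{KSE-EVP2} (the zero element has energy $0>\Upsilon_\mu^+$, so it may be discarded). By Lemma \ref{KSE-lemma6} it is bounded and, along a subsequence, $u_k\rightharpoonup u_0\neq0$ in $X_0$, $u_k\to u_0$ in $L^p(\Om)$ for all $p\in[1,\infty)$ and a.e.\ in $\mb R^n$. First I would establish the compactness of the nonlinearities: since $\beta<\frac{n}{n-s}$, for every $\delta>0$ one has $t^{\beta}\le C_\delta+\delta t^{\frac{n}{n-s}}$ on $[0,\infty)$, so picking $\delta$ small compared with $\sup_k\|u_k\|$ and applying Theorem \ref{moser} shows that $\{f(u_k^+)\}$, $\{f(u_k^+)u_k^+\}$, $\{f'(u_k^+)(u_k^+)^2\}$ are bounded in $L^\gamma(\Om)$ for some $\gamma>1$; by Vitali's theorem this gives $\int_\Om f(u_k^+)u_k^+\,dx\to\int_\Om f(u_0^+)u_0^+\,dx$, $\int_\Om f(u_k^+)\varphi\,dx\to\int_\Om f(u_0^+)\varphi\,dx$ and $\int_\Om(u_k^+)^{1-q}\,dx\to\int_\Om(u_0^+)^{1-q}\,dx$. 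Since $u_k\in\mc N_\mu$, passing to the limit in $\|u_k\|^{\frac{n\theta}{s}}=\mu\int_\Om(u_k^+)^{1-q}\,dx+\int_\Om f(u_k^+)u_k^+\,dx$ yields $\|u_k\|\to d:=L^{s/(n\theta)}$, where $L:=\mu\int_\Om(u_0^+)^{1-q}\,dx+\int_\Om f(u_0^+)u_0^+\,dx>0$ (positivity from Lemma \ref{KSE-lemma7}(i)).

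Next I would prove strong convergence $u_k\to u_0$ in $X_0$. The basic point is that $|u_k(x)-u_k(y)|^{\frac{n}{s}-2}(u_k(x)-u_k(y))$ is bounded in $L^{\frac{n}{n-s}}$ of the measure $|x-y|^{-2n}\,dx\,dy$ and converges a.e.\ to the same expression built from $u_0$, hence converges weakly there; consequently $\mc A(u_k,\psi)\to\mc A(u_0,\psi)$ for every $\psi\in X_0$. Taking $\varphi=u_0^+$ in Lemma \ref{KSE-lemma9}(ii) and using this together with $\|u_k\|^{\frac{n}{s}(\theta-1)}\to d^{\frac{n}{s}(\theta-1)}$, the convergence $\int_\Om f(u_k^+)u_0^+\,dx\to\int_\Om f(u_0^+)u_0^+\,dx$, and Fatou's lemma for the singular term, I obtain $d^{\frac{n}{s}(\theta-1)}\mc A(u_0,u_0^+)\ge\int_\Om f(u_0^+)u_0^+\,dx+\mu\int_\Om(u_0^+)^{1-q}\,dx=L$. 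Combining this with $\mc A(u_0,u_0^+)\le\|u_0\|^{\frac{n}{s}}$ and $\|u_0\|\le d$ (weak lower semicontinuity) forces the chain
\[
L\le d^{\frac{n}{s}(\theta-1)}\mc A(u_0,u_0^+)\le d^{\frac{n}{s}(\theta-1)}\|u_0\|^{\frac{n}{s}}\le d^{\frac{n\theta}{s}}=L,
\]
so every inequality is an equality. From $\|u_0\|=d$ and the uniform convexity of $X_0$ (recall $n/s>1$) we get $u_k\to u_0$ strongly in $X_0$; from $\mc A(u_0,u_0^+)=\|u_0\|^{\frac{n}{s}}$ we get $u_0^-\equiv0$, i.e.\ $u_0\ge0$; and $d^{\frac{n}{s}(\theta-1)}\|u_0\|^{\frac{n}{s}}=L$ is exactly $\phi_{u_0}'(1)=0$, so $u_0\in\mc N_\mu\setminus\{0\}$, whence $u_0\in\mc N_\mu^+\cup\mc N_\mu^-$ by Lemma \ref{KSE-lemma2}.

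Finally I would conclude. Strong convergence gives $I_\mu(u_k)\to I_\mu(u_0)$ (again by Vitali, using $F(z)\le zf(z)$ to control $\int_\Om F(u_k^+)\,dx$), hence $I_\mu(u_0)=\Upsilon_\mu^+$ by \eqref{KSE-EVP1}. If $u_0\in\mc N_\mu^-$, Theorem \ref{KSE-theorem1} applied to $u_0$ gives $t^+=t^+(u_0)\in(0,1)$ with $t^+u_0\in\mc N_\mu^+$, and since $\phi_{u_0}$ is strictly increasing on $(t^+,1)$ we would have $I_\mu(t^+u_0)<I_\mu(u_0)=\Upsilon_\mu^+$, contradicting $t^+u_0\in\mc N_\mu^+\cup\{0\}$; therefore $u_0\in\mc N_\mu^+$. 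To see $u_0$ is a weak solution, rerunning the limit passage in Lemma \ref{KSE-lemma9}(ii) with an arbitrary $0\le\varphi\in X_0$ gives $d^{\frac{n}{s}(\theta-1)}\mc A(u_0,\varphi)\ge\int_\Om f(u_0)\varphi\,dx+\mu\int_\Om u_0^{-q}\varphi\,dx$, with equality at $\varphi=u_0$ since $u_0\in\mc N_\mu$; then testing this inequality with $(u_0+\epsilon\psi)^+$ for $\psi\in X_0$, expanding, cancelling the $u_0$-terms, using $\operatorname{meas}\{u_0+\epsilon\psi<0\}\to0$ together with $u_0^{-q}\psi\in L^1(\Om)$ (Lemma \ref{KSE-lemma9}(i) and Fatou) and $\mc A(u_0,(u_0+\epsilon\psi)^-)=o(\epsilon)$ — proved exactly as in \eqref{KSE-lemma9-eq4}--\eqref{KSE-lemma9-eq9} — dividing by $\epsilon>0$ and letting $\epsilon\to0^+$ yields $d^{\frac{n}{s}(\theta-1)}\mc A(u_0,\psi)\ge\int_\Om f(u_0)\psi\,dx+\mu\int_\Om u_0^{-q}\psi\,dx$ for all $\psi\in X_0$; replacing $\psi$ by $-\psi$ promotes this to an equality, so $u_0\in\mc N_\mu^+$ is a non-negative weak solution of $(P_\mu)$ with $I_\mu(u_0)=\Upsilon_\mu^+$.

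I expect the crux to be the strong convergence step — squeezing $\|u_0\|=\lim_k\|u_k\|$ out of the limiting inequality, the Nehari identity and weak lower semicontinuity — coupled with the compactness of the exponential terms, where the strict subcriticality $\beta<\frac{n}{n-s}$ must be used carefully through Theorem \ref{moser}; the singular term is the other delicate point, since only one-sided (Fatou) bounds are available and the Euler--Lagrange identity has to be recovered through the $(u_0+\epsilon\psi)^+$ truncation argument.
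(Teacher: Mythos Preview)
Your argument is correct and reaches the same conclusion, but the route to strong convergence and to the sign/Nehari information for $u_0$ is genuinely different from the paper's. The paper first tests Lemma~\ref{KSE-lemma9}(ii) with $\varphi=u_k^-$ to force $\|u_k^-\|\to0$, then tests with $\varphi=u_k-u_0$ and combines this with the Brezis--Lieb splitting $\|u_k\|^{n/s}=\|u_k-u_0\|^{n/s}+\|u_0\|^{n/s}+o_k(1)$ and the compactness $\int_\Om f(u_k)(u_k-u_0)\,dx\to0$ to obtain $u_k\to u_0$ strongly; finally it passes to the limit in Lemma~\ref{KSE-lemma7}(ii) to place $u_0$ in $\mc N_\mu^+$ directly. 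Your approach instead exploits the weak convergence of the nonlinear gradient $|u_k(x)-u_k(y)|^{\frac{n}{s}-2}(u_k(x)-u_k(y))$ in $L^{\frac{n}{n-s}}(|x-y|^{-2n}dxdy)$ to pass to the limit in $\mc A(u_k,u_0^+)$, and then squeezes the chain $L\le d^{\frac{n}{s}(\theta-1)}\mc A(u_0,u_0^+)\le d^{\frac{n}{s}(\theta-1)}\|u_0\|^{n/s}\le d^{\frac{n\theta}{s}}=L$; this simultaneously delivers $\|u_0\|=d$ (hence strong convergence by uniform convexity), $u_0^-\equiv0$ (from $\mc A(u_0,u_0^+)=\|u_0\|^{n/s}$, noting $u_0=0$ on $\mb R^n\setminus\Om$), and $u_0\in\mc N_\mu$. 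Ruling out $\mc N_\mu^-$ through Theorem~\ref{KSE-theorem1} is also different from, and just as clean as, the paper's limit passage in Lemma~\ref{KSE-lemma7}(ii). Your squeeze argument is somewhat more economical, packaging three conclusions into one chain, while the paper's Brezis--Lieb route is more standard and avoids invoking the weak convergence of the nonlinear gradient. The remaining steps---the subcritical uniform integrability of the exponential terms via Theorem~\ref{moser} and the recovery of the Euler--Lagrange identity through the $(u_0+\epsilon\psi)^+$ truncation---are essentially the same in both proofs.
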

\begin{proof}
Let $\mu \in (0,\mu_0)$ and $\{u_k\}\subset \mc N_\mu^+$ be the sequence obtained in \eqref{KSE-EVP1} and \eqref{KSE-EVP2}. Then from Lemma \ref{KSE-lemma6}, we know that the sequence $\{u_k\}$ is bounded in $X_0$ and there exists a $u_0\in X_0\setminus \{0\}$ such that $u_k\rightharpoonup u_0$ weakly in $X_0$, strongly in $L^p(\Om)$ for $p \in [1,\infty)$ and $u_k \to u_0$ point wise a.e. in $\Om$. Taking $\varphi = u_k^-$ in Lemma \ref{KSE-lemma9}$(ii)$ we get
  \begin{align*}
   & \lim_{k \to \infty}\left[\left(\|u_k\|^{\frac{n}{s}(\theta-1)}\right)\int_{\mb R^{2n}}\frac{|u_k(x)-u_k(y)|^{\frac{n}{s}}(u_k(x)-u_k(y)) (u_k^-(x)-u_k^-(y))}{|x-y|^{2n}}~dxdy\right]=0.
    \end{align*}
    Using the inequality
    \[(u_k(x)-u_k(y))(u_k^-(x)-u_k^-(y))\leq -|u_k^-(x)-u_k^-(y)|^2\]
    for a.e. $x,y\in \mb R^n$ in the above equation, we get
    \[\lim_{k \to \infty}\left[\left(\|u_k\|^{\frac{n}{s}(\theta-1)}\right)\int_{\mb R^{2n}}\frac{|u_k(x)-u_k(y)|^{\frac{n}{s}} |u_k^-(x)-u_k^-(y)|^2}{|x-y|^{2n}}~dxdy\right]\leq 0.\]
It is easy to verify that for a.e. $x,y$ in $\mb R^n$,
\[|u_k(x)-u_k(y)|^{\frac{n}{s}} |u_k^-(x)-u_k^-(y)|^2\geq |u_k^-(x)-u_k^-(y)|^{\frac{n}{s}}\]
which on inserting in the above equation gives
\[\lim_{k\to \infty}  \|u_k\|^{\frac{n}{s}(\theta-1)}\|u_k^-\|^{\frac{n}{s}}\leq 0.\]
This implies that $\lim_{k \to \infty}\|u_k^-\|=0$, since $\lim_{k \to \infty}\|u_k\|\neq 0$ in view of Lemma \ref{KSE-lemma6}. Therefore, w.l.o.g. we assume that the sequence $\{u_k\}$ is infact a sequence of non-negative functions and so we can assume that $u_0\geq 0$ in $\mb R^n$.
Using Theorem $4.9$ of \cite{Brezis-book}, we get that there exists a $h \in L^p(\Om)$ for fixed $p\in [1,\infty)$ such that $u_k \leq h$ for all $k\in \mb N$. This implies that $u_k^{1-q}\leq h^{1-q}$ for all $k\in \mb N$, therefore using Lebesgue dominated convergence theorem along with H\"{o}lder inequality it follows that
\begin{equation}\label{KSE-prop1-eq1}
    \lim_{k \to \infty} \int_\Om u_k^{1-q}~dx = \int_\Om u_0^{1-q}~dx.
\end{equation}
Moreover, since $u_k^{-q}\varphi \in L^1(\Om)$ for any $\varphi \in X_0$ using Lemma \ref{KSE-lemma9}$(i)$, we use the Fatou's Lemma to get
\begin{equation}\label{KSE-prop1-eq3}
    \int_\Om u_0^{1-q}~dx \leq \liminf_{k \to \infty}\int_\Om u_k^{-q}u_0~dx.
\end{equation}
Suppose $\|u_k\| \to d>0$ as $k \to \infty$. By a.e. pointwise convergence of $u_k$ to $u_0$ and Brezis Lieb Lemma, we get that
\begin{equation}\label{KSE-prop1-eq4}
    \|u_k\|^{\frac{n}{s}} = \|u_k-u_0\|^{\frac{n}{s}}+\|u_0\|^{\frac{n}{s}}+o_k(1).
\end{equation}
Also since $\beta<\frac{n}{n-s}$, we can use compactness of Moser-Trudinger embedding to get that
\begin{equation}\label{KSE-prop1-eq5}
  \lim_{k\to \infty}\int_\Om f(u_k)(u_k-u_0)~dx =0.
\end{equation}
Now from \eqref{KSE-prop1-eq4}, \eqref{KSE-prop1-eq5} and Lemma \ref{KSE-lemma9}, it follows that as $k \to \infty$
 \begin{align*}
   o_k(1) &= \left(\|u_k\|^{\frac{n}{s}(\theta-1)}\right)\int_{\mb R^{2n}}\frac{|u_k(x)-u_k(y)|^{\frac{n}{s}}(u_k(x)-u_k(y)) ((u_k-u_0)(x)-(u_k-u_0)(y))}{|x-y|^{2n}}~dxdy\\
    & \quad \quad - \mu \int_\Om (u_k)^{-q}(u_k-u_0)~dx - \int_{\Om}f(u_k)(u_k-u_0)dx \\
    & = \left(d^{\frac{n}{s}(\theta-1)}\right)(d^{\frac{n}{s}}- \|u_0\|^{\frac{n}{s}}) - \mu \int_\Om (u_k)^{-q}(u_k-u_0)~dx\\
    & =  \left(d^{\frac{n}{s}(\theta-1)}\right)\|u_k-u_0\|^{\frac{n}{s}} - \mu \int_\Om (u_k)^{-q}(u_k-u_0)~dx \geq \left(d^{\frac{n}{s}(\theta-1)}\right)\|u_k-u_0\|^{\frac{n}{s}},
    \end{align*}
    where we used \eqref{KSE-prop1-eq1} and \eqref{KSE-prop1-eq3} to get the last inequality. Therefore it must be $d=0$ or $\lim\limits_{k\to \infty}\|u_k-u_0\|^{\frac{n}{s}}=0$ which anyway suggests that $u_k \to u_0$ strongly in $X_0$.  From this, considering $\{u_k\}\subset \mc N_{\mu}^+ \subset \mc N_\mu$ we infer that $u_0 \in \mc N_\mu$. Passing limits in Lemma \ref{KSE-lemma7} (ii), we obtain that
    $$  \left(\frac{n\theta}{s}+q-1\right)\|u_0\|^{\frac{n\theta}{s}}-q\int_\Om f(u_0)u_0~dx-\int_\Om f^\prime(u_0)(u_0)^2~dx >0$$
    that is $u_0 \in \mc N_\mu^+$. We also deduce that
    \[\Upsilon^+_\mu \leq I_\mu(u_0) = \lim_{k\to \infty}I_{\mu}(u_k)= \upsilon_\mu^+\]
    that is $u_0$ achieves $\Upsilon_\mu^+$.

    Now we will show that $u_0$ is a positive weak solution of $(P_\mu)$. Passing on the limits in Lemma \ref{KSE-lemma9} (ii) and using Fatou's lemma as in \eqref{KSE-prop1-eq3}, we infer that for any $0\leq \varphi \in X_0$ it holds that
    \begin{align}
   & \left(\|u_0\|^{\frac{n}{s}(\theta-1)}\right)\int_{\mb R^{2n}}\frac{|u_0(x)-u_0(y)|^{\frac{n}{s}}(u_0(x)-u_0(y)) (\varphi(x)-\varphi(y))}{|x-y|^{2n}}~dxdy \nonumber\\
    & \quad \quad - \mu \int_\Om u_0^{-q}\varphi~dx - \int_{\Om}f(u_0)\varphi~dx \geq 0.\label{KSE-prop1-eq6}
    \end{align}
    So we take any $\varphi \in X_0$, then we define  $w_\epsilon:= u_0^+ +\epsilon \varphi$ for $\epsilon>0$ and consider $w_\epsilon^+$ as test function in \eqref{KSE-prop1-eq6}. Now repeating the steps \eqref{KSE-lemma9-eq2} to \eqref{KSE-lemma9-eq9} with $u_k$ replaced by $u_0$, we can establish that \eqref{KSE-prop1-eq6} holds for any $\varphi \in X_0$.  Further, \eqref{KSE-prop1-eq6} gives that $u_0^{-q}\varphi \in L^1(\Om)$ for any $\varphi \in X_0$ and thus arbitrariness of $\varphi \in X_0$ along with $u_0$ satisfying \eqref{KSE-prop1-eq6} gives us that it solves $(P_\mu)$ weakly. We already have $u_0\geq 0$ a.e. in $\mb R^n$ and since $0\notin \mc N_\mu^+$, $u_0 \not \equiv 0$. This completes the proof.
\end{proof}


\section{Existence of second solution}
This section is devoted to solving a minimization problem on $\mc N_{\mu}^-$ and hence establishing existence of second solution to $(P_\mu)$.

On a similar note to Lemma \ref{KSE-lemma4}, we also have the following Lemma.
 \begin{lemma}\label{KSE-lemma10}
 For every $u\in \mc N_\mu^-$ and $\mu \in (0,\mu_0)$, there exists a positive real number $\epsilon$ and a differentiable function $\tilde \xi : B(0,\epsilon)\subset X_0 \to \mb R^+$ such that
 \[\tilde \xi(v)>0,\; \tilde \xi(0)=1,\; \tilde \xi(v)(u-v)\in \mc N_\mu^-,\;\text{for all}\; v \in B(0,\epsilon).\]
 \end{lemma}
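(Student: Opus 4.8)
The plan is to run the proof of Lemma~\ref{KSE-lemma4} essentially verbatim, the only structural change being that for $u\in\mc N_\mu^-$ the second derivative $\phi_u^{\prime\prime}(1)$ of the fibering map is strictly negative instead of strictly positive. First I would fix $u\in\mc N_\mu^-$ and introduce exactly the auxiliary map $G_u:\mb R\times X_0\to\mb R$ used there, namely
\[G_u(t,v)= t^{\frac{n\theta}{s}+q-1}\|u-v\|^{\frac{n\theta}{s}}-\mu \int_{\Om}((u-v)^+)^{1-q}~dx-t^{q+r-1}\int_\Om f(t(u-v)^+)(u-v)^+~dx,\]
which satisfies $G_u(t,v)=t^q\,\phi_{u-v}^\prime(t)$ for $t>0$. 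Since $u\in\mc N_\mu$ we have $G_u(1,0)=\phi_u^\prime(1)=0$, while a direct computation gives $\frac{\partial}{\partial t}G_u(1,0)=\phi_u^{\prime\prime}(1)$. The decisive point is that this number is nonzero---indeed strictly negative, because $u\in\mc N_\mu^-$---which is all that the Implicit Function Theorem requires.

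Applying the Implicit Function Theorem at $(1,0)$ then produces an $\epsilon>0$ and a differentiable function $\tilde\xi:B(0,\epsilon)\subset X_0\to\mb R^+$ with $\tilde\xi(0)=1$ and $G_u(\tilde\xi(v),v)=0$ for all $v\in B(0,\epsilon)$; by continuity of $\tilde\xi$, after shrinking $\epsilon$ we keep $\tilde\xi(v)>0$ throughout. Exactly as in Lemma~\ref{KSE-lemma4}, the relation $G_u(\tilde\xi(v),v)=(\tilde\xi(v))^q\,\phi_{u-v}^\prime(\tilde\xi(v))$ together with $\tilde\xi(v)>0$ forces $\phi_{u-v}^\prime(\tilde\xi(v))=0$, that is, $\tilde\xi(v)(u-v)\in\mc N_\mu$ for every $v\in B(0,\epsilon)$.

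Finally, to see that $\tilde\xi(v)(u-v)$ lands in $\mc N_\mu^-$ rather than elsewhere, I would differentiate $G_u(t,v)=t^q\,\phi_{u-v}^\prime(t)$ in $t$ and evaluate at $t=\tilde\xi(v)$, where $\phi_{u-v}^\prime$ vanishes, which yields $\frac{\partial}{\partial t}G_u(\tilde\xi(v),v)=(\tilde\xi(v))^q\,\phi_{u-v}^{\prime\prime}(\tilde\xi(v))$. On the other hand, from $\phi_{cw}(s)=\phi_w(cs)$ one gets the scaling identity $\phi_{\tilde\xi(v)(u-v)}^{\prime\prime}(1)=(\tilde\xi(v))^2\,\phi_{u-v}^{\prime\prime}(\tilde\xi(v))$, so $\phi_{\tilde\xi(v)(u-v)}^{\prime\prime}(1)$ has the same sign as $\frac{\partial}{\partial t}G_u(\tilde\xi(v),v)$. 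Since $(t,v)\mapsto\frac{\partial}{\partial t}G_u(t,v)$ is continuous, $v\mapsto\tilde\xi(v)$ is continuous, $\tilde\xi(0)=1$, and $\frac{\partial}{\partial t}G_u(1,0)=\phi_u^{\prime\prime}(1)<0$, a last shrinking of $\epsilon$ gives $\frac{\partial}{\partial t}G_u(\tilde\xi(v),v)<0$, hence $\phi_{\tilde\xi(v)(u-v)}^{\prime\prime}(1)<0$, for all $v\in B(0,\epsilon)$; therefore $\tilde\xi(v)(u-v)\in\mc N_\mu^-$. I do not expect any genuine obstacle here: the argument is identical in structure to Lemma~\ref{KSE-lemma4}, and the sole point requiring care is consistently propagating the now-negative sign of $\frac{\partial}{\partial t}G_u$ by continuity. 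If one wishes to avoid repeating the computation altogether, it suffices to observe that Lemma~\ref{KSE-lemma4} and its proof are symmetric under interchanging the roles of $\mc N_\mu^+$ and $\mc N_\mu^-$, i.e.\ under reversing every strict inequality on $\phi^{\prime\prime}(1)$.
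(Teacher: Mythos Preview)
Your proposal is correct and follows essentially the same approach as the paper: apply the Implicit Function Theorem to $G_u$ at $(1,0)$ using $\frac{\partial}{\partial t}G_u(1,0)=\phi_u^{\prime\prime}(1)\neq 0$, then propagate the sign $\phi^{\prime\prime}<0$ by continuity after possibly shrinking $\epsilon$. The paper merely packages the first step by citing Remark~\ref{KSE-remark1} (Lemma~\ref{KSE-lemma4} with $\mc N_\mu^+$ replaced by $\mc N_\mu$) and then invokes continuity of $v\mapsto\phi_{\tilde\xi(v)(u-v)}^{\prime\prime}(1)$ directly, while you spell the computation out and route the sign argument through $\frac{\partial}{\partial t}G_u(\tilde\xi(v),v)$; these are equivalent.
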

\begin{proof}
For $u \in \mc N_\mu^-$, we have $\phi_u^\prime(1)=0$ and $\phi_u^{\prime\prime}(1)<0$.
This gives
\[\left(\frac{n\theta}{s}+q-1\right)\|u\|^{\frac{n\theta}{s}}-q \int_\Om f(u^+)u^+~dx - \int_\Om f^\prime(u^+)(u^+)^2~dx<0\]
which suggests that $u\in \Gamma \setminus \{0\}$, where we use the fact that $\mu \in (0,\mu_0)$. Now by Remark \ref{KSE-remark1}, we deduce that there exists an $\epsilon>0$ and a differentiable function $\tilde \xi : B(0,\epsilon)\subset X_0\to \mb R$ such that
\[\tilde \xi(0)=1\;\; \text{and}\;\; \tilde \xi(v)(u-v)\in \mc N_\mu,\;\; \forall \; v \in B(0,\epsilon).\]
We use the fact that $\phi_{\tilde \xi(0)(u-0)}^{\prime\prime}(1)<0$ and the continuity of maps $\tilde \xi $ and $\phi_{\tilde \xi(v)(u-v)}^{\prime\prime}$  to conclude that there exists a $\epsilon>0$(chosen even smaller, if necessary) such that
\[\tilde \xi(v)(u-v)\in \mc N_\mu^-,\;\; \forall \; v \in B(0,\epsilon).\]
This completes the proof.
\end{proof}
 Now we define th following
 \[\Upsilon_\mu^- = \inf_{v\in \mc N_\mu^-}I_\mu(v).\]

\begin{theorem}\label{KSE-second-sol}
$(P_\mu)$ admits a non negative weak solution $v_0 \in \mc N_\mu^-$ when $\mu \in (0,\mu_0)$ such that $I_\mu(v_0)= \Upsilon^-_\mu$.
\end{theorem}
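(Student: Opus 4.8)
The plan is to replicate the variational argument of Section~4, now working over $\mc N_\mu^-$ in place of $\mc N_\mu^+$ and using Lemma~\ref{KSE-lemma10} in place of Lemma~\ref{KSE-lemma4}. First observe that $\mc N_\mu^-\neq\emptyset$ by Theorem~\ref{KSE-theorem1} and that $I_\mu$ is coercive and bounded below on $\mc N_\mu\supset\mc N_\mu^-$ by Lemma~\ref{KSE-lemma3}, so $\Upsilon_\mu^-$ is a finite real number. The set $\mc N_\mu^-$ is moreover closed in $X_0$ and uniformly bounded away from $0$: as in the proof of Lemma~\ref{KSE-lemma10}, every $u\in\mc N_\mu^-$ satisfies $\left(\frac{n\theta}{s}+q-1\right)\|u\|^{\frac{n\theta}{s}}<q\int_\Om f(u^+)u^+~dx+\int_\Om f^\prime(u^+)(u^+)^2~dx$, hence $u\in\Gamma\setminus\{0\}$ and $\|u\|\geq\delta_0:=\inf_{w\in\Gamma\setminus\{0\}}\|w\|>0$ by the first claim in the proof of Lemma~\ref{KSE-lemma1}; since $\mc N_\mu^0=\{0\}$ (Lemma~\ref{KSE-lemma2}) and $u\mapsto\phi_u^\prime(1)$, $u\mapsto\phi_u^{\prime\prime}(1)$ are continuous, no strong limit of points of $\mc N_\mu^-$ can leave $\mc N_\mu^-$. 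Then the Ekeland variational principle on the complete metric space $\mc N_\mu^-$ furnishes a minimizing sequence $\{v_k\}\subset\mc N_\mu^-$ with
\[\Upsilon_\mu^-\leq I_\mu(v_k)\leq\Upsilon_\mu^-+\frac{1}{k},\qquad I_\mu(v)\geq I_\mu(v_k)-\frac{1}{k}\|v-v_k\|\quad\forall\,v\in\mc N_\mu^-.\]
By coercivity $\{v_k\}$ is bounded, so up to a subsequence $v_k\rightharpoonup v_0$ in $X_0$, $v_k\to v_0$ in $L^p(\Om)$ for every $p\in[1,\infty)$ and a.e.\ in $\Om$; since $\beta<\frac{n}{n-s}$, compactness of the Moser--Trudinger embedding (Theorem~\ref{moser}) gives $\int_\Om(v_k^+)^{1-q}~dx\to\int_\Om(v_0^+)^{1-q}~dx$, $\int_\Om f(v_k^+)v_k^+~dx\to\int_\Om f(v_0^+)v_0^+~dx$ and $\int_\Om f^\prime(v_k^+)(v_k^+)^2~dx\to\int_\Om f^\prime(v_0^+)(v_0^+)^2~dx$.

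Next I would show $v_0\not\equiv0$ and that $\{v_k\}$ stays uniformly off $\mc N_\mu^0$. Since $v_k\in\mc N_\mu$ we have $\|v_k\|^{\frac{n\theta}{s}}=\mu\int_\Om(v_k^+)^{1-q}~dx+\int_\Om f(v_k^+)v_k^+~dx$, so if $v_0\equiv0$ the right-hand side would tend to $0$, forcing $\|v_k\|\to0$ and contradicting $\|v_k\|\geq\delta_0$; hence $v_0\not\equiv0$. For the second point I would establish the $\mc N_\mu^-$-analogue of Lemma~\ref{KSE-lemma7}: $\liminf_k\|v_k\|>0$ (immediate from $\|v_k\|\geq\delta_0$) and
\[\liminf_{k\to\infty}\left[q\int_\Om f(v_k^+)v_k^+~dx+\int_\Om f^\prime(v_k^+)(v_k^+)^2~dx-\left(\frac{n\theta}{s}+q-1\right)\|v_k\|^{\frac{n\theta}{s}}\right]>0,\]
that is, $-\phi_{v_k}^{\prime\prime}(1)$ is bounded away from $0$; arguing by contradiction exactly as in Lemma~\ref{KSE-lemma7}(ii), passing to the limit with the convergences above and weak lower semicontinuity of the norm would place $v_0$ in the degenerate set and contradict $\Gamma_0>0$ of Lemma~\ref{KSE-lemma1}. \textbf{This step is the main obstacle:} minimizing over $\mc N_\mu^-$ gives no a priori sign for the energy (unlike $\mc N_\mu^+$, where Lemma~\ref{KSE-lemma5} forces $I_\mu<0$), so keeping the minimizing sequence off the degenerate manifold relies entirely on the subcritical compactness together with $\Gamma_0>0$.

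With these estimates available, I would reproduce Lemmas~\ref{KSE-lemma8} and~\ref{KSE-lemma9} line by line with $u_k,\mc N_\mu^+,\xi_k$ replaced by $v_k,\mc N_\mu^-,\tilde\xi_k$ (the last coming from Lemma~\ref{KSE-lemma10}); the only adjustments are to carry $|\tilde\xi_k(t\varphi)-1|$ with the absolute value (so that dividing by $t$ and letting $t\to0^+$ yields $|\langle\tilde\xi_k^\prime(0),\varphi\rangle|$) and to invoke the displayed $\liminf$ above wherever Lemma~\ref{KSE-lemma7}(ii) was used. This produces $(v_k^+)^{-q}\varphi\in L^1(\Om)$ for every $\varphi\in X_0$, together with the asymptotic weak formulation
\[\|v_k\|^{\frac{n}{s}(\theta-1)}\mc A(v_k,\varphi)-\mu\int_\Om(v_k^+)^{-q}\varphi~dx-\int_\Om f(v_k^+)\varphi~dx=o_k(1)\quad\forall\,\varphi\in X_0.\]
Taking $\varphi=v_k^-$ and using $(v_k(x)-v_k(y))(v_k^-(x)-v_k^-(y))\leq-|v_k^-(x)-v_k^-(y)|^2$ together with $\|v_k\|\geq\delta_0$ forces $\|v_k^-\|\to0$, so we may assume $v_k\geq0$ and $v_0\geq0$.

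Finally, exactly as in Theorem~\ref{KSE-first-sol}, I would test the asymptotic identity with $\varphi=v_k-v_0$ and combine the Brezis--Lieb splitting $\|v_k\|^{\frac{n}{s}}=\|v_k-v_0\|^{\frac{n}{s}}+\|v_0\|^{\frac{n}{s}}+o_k(1)$, the compact Moser--Trudinger embedding ($\int_\Om f(v_k)(v_k-v_0)~dx\to0$) and Fatou's lemma for the singular term ($\int_\Om v_0^{1-q}~dx\leq\liminf_k\int_\Om v_k^{-q}v_0~dx$) to deduce $o_k(1)\geq d^{\frac{n}{s}(\theta-1)}\|v_k-v_0\|^{\frac{n}{s}}$ with $d:=\lim_k\|v_k\|\geq\delta_0>0$, whence $v_k\to v_0$ strongly in $X_0$. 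Strong convergence gives $v_0\in\mc N_\mu$, passing to the limit in the displayed $\liminf$ gives $\phi_{v_0}^{\prime\prime}(1)<0$ so $v_0\in\mc N_\mu^-$, and continuity of $I_\mu$ with the Ekeland bound gives $I_\mu(v_0)=\Upsilon_\mu^-$. Passing to the limit in the asymptotic identity (Fatou for the singular term as in \eqref{KSE-prop1-eq6}, then the test-function argument $w_\epsilon=v_0^+ +\epsilon\varphi$, $w_\epsilon^+$ of Lemma~\ref{KSE-lemma9} to pass from $\varphi\geq0$ to arbitrary $\varphi\in X_0$) shows that $v_0$ is a weak solution of $(P_\mu)$; it is non-negative and $\not\equiv0$ since $0\notin\mc N_\mu^-$.
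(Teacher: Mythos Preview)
Your proposal is correct and follows essentially the same route as the paper, which proves the theorem in one short paragraph by showing $\mc N_\mu^-$ is closed (via the same lower bound $\|v\|\geq C_0$ for $v\in\mc N_\mu^-$ you obtain from $v\in\Gamma\setminus\{0\}$), applying Ekeland, and then simply asserting that Lemma~\ref{KSE-lemma9} and the proof of Theorem~\ref{KSE-first-sol} carry over verbatim with $\tilde\xi$ in place of $\xi$. Your write-up is in fact more careful than the paper's: you make explicit the $\mc N_\mu^-$-analogue of Lemma~\ref{KSE-lemma7}(ii) with the reversed sign (needed for the denominator in the Lemma~\ref{KSE-lemma8} argument to stay bounded away from zero), you justify $v_0\not\equiv 0$ directly from the Nehari identity and $\|v_k\|\geq\delta_0$ (the paper just asserts $v_0\in X_0\setminus\{0\}$ without comment), and you flag the absolute-value adjustment $|\tilde\xi_k(t\varphi)-1|$ in the Ekeland comparison.
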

\begin{proof}
We begin with proving that $\mc N_\mu^-$ is a closed set. To prove this, it is enough to show that there exists a $C_0>0$ such that $\|v\|\geq C_0$ for all $v \in \mc N_\mu^-$. Suppose not then there exists a sequence $\{v_k\}\subset \mc N_\mu^-$ such that $\|v_k\| \to 0$ as $k \to \infty$. But $\phi_{v_k}^{\prime\prime}(1)<0$ gives that there exists constants $C_1,C_2>0$ such that
\[\left(\frac{n\theta}{s}+q-1\right)\|v_k\|^{\frac{n\theta}{s}}\leq C_1 \|v_k\|^r + C_2 \|v_k\|^{r+\beta}\]
which gives a contradiction since $r>\frac{n\theta}{s}$. Therefore we can apply the Ekeland Variational principle to assert that there exists a sequence $\{v_k\}  \subset \mc N_\mu^-$ such that
\begin{align*}
     \Upsilon_\mu^- &\leq I_\mu(v_k)\leq \Upsilon^-_\mu +\frac{1}{k}\\
     I_\mu(v) &\geq I_\mu(v_k)-\frac{1}{k}\|v-v_k\|,\; \text{for all}\; v\in \mc N_\mu^- \cup \{0\}.
 \end{align*}
 So $\{v_k\}$ is a minimizing sequence for $I_\mu$ at $\Upsilon_\mu^-$. As in Lemma \ref{KSE-lemma10}, we obtain that $\{v_k\} \subset \Gamma \setminus \{0\}$. Following proof of Lemma \ref{KSE-lemma6}, it is easy to verify that $\{v_k\}$ is bounded in $X_0$. Therefore, there exists a $v_0 \in X_0\setminus{\{0\}}$ such that
 \[v_k \rightharpoonup v_0\;\text{weakly in} \; X_0 \text{ and}\; v_k \to v_0 \;\text{strongly in} \; L^r(\Om)\; \text{ as}\; k \to \infty \]
 for any $r \geq 1$. Now using Lemma \ref{KSE-lemma10} and realising that Lemma \ref{KSE-lemma9} holds for $\{v_k\}$ replaced with $\{u_k\}$, we follow the proof of Theorem \ref{KSE-first-sol} to obtain that $v\geq 0$ a.e. in $\mb R^n$, $v_0 \in \mc N_\mu^-$ and it solves $(P_\mu)$ weakly such that $\Upsilon_\mu^-= I_\mu(v_0)$. This finishes the proof.
\end{proof}

\noi \textbf{Proof of Theorem \ref{KSE-MT}: } The proof of this main result follows as a consequence of Theorem
\ref{KSE-first-sol} and Theorem \ref{KSE-second-sol}.

\end{document}